\newskip\@bigflushglue \@bigflushglue = -100pt plus 1fil
\def\bigcentering{\let\\\@centercr\rightskip\@bigflushglue%
\leftskip\@bigflushglue
\parindent\z@\parfillskip\z@skip}
\theoremstyle{plain}
\newtheorem{theorem}{Theorem}[section] 
\newtheorem{proposition}[theorem]{Proposition}
\newtheorem{lemma}[theorem]{Lemma}
\newtheorem{corollary}[theorem]{Corollary}
\theoremstyle{definition}
\newtheorem{remark}[theorem]{Remark}
\newtheorem{definition}[theorem]{Definition} 
\newtheorem{example}[theorem]{Example} 
\newtheorem{question}[theorem]{Question}
\newtheorem{conjecture}[theorem]{Conjecture}
\theoremstyle{plain}
\newtheorem{thmx}{Theorem}
\newtheorem{conjx}[thmx]{Conjecture}
\newcommand{\WP}{\textnormal{WP}}
\newcommand{\supp}{\textnormal{supp}}
\newcommand{\co}{\textnormal{co}}
\newcommand{\N}{\mathbb{N}}
\newcommand{\Z}{\mathbb{Z}}
\newcommand{\G}{\mathcal{G}}
\newcommand{\X}{\mathcal{X}}
\newcommand{\F}{\mathbb{F}}
\newcommand{\Fo}{\mathcal{F}}
\newcommand{\llangle}{\langle\langle}
\newcommand{\rrangle}{\rangle\rangle}
\newcommand{\factor}{\sqsubseteq}
\newcommand{\define}[1]{\textbf{#1}}
\newcommand{\Fix}{\textnormal{Fix}}
\newcommand{\upa}[1]{#1^{\uparrow}}
\newcommand{\act}{\curvearrowright}
\newcommand{\Cl}{\textnormal{Cl}}
\newcommand{\Free}{\textnormal{Free}}
\newcommand{\Sub}{\textnormal{Sub}}
\newcommand{\stab}{\textnormal{stab}}
\newcommand{\orb}{\textnormal{orb}}
\providecommand{\keywords}[1]{{\small\textit{Keywords:} #1}}
\apptocmd{\sloppy}{\hbadness10000\relax}{}{}
\begin{document}

\author{Nicol\'as Bitar}

\date{}

\title{Realizability of Subgroups by Subshifts of Finite Type}
\maketitle
\begin{abstract}
\hskip -.2in
\noindent
We study the problem of realizing families of subgroups as the set of stabilizers of configurations from a subshift of finite type (SFT). This problem generalizes both the existence of strongly and weakly aperiodic SFTs. We show that a finitely generated normal subgroup is realizable if and only if the quotient by the subgroup admits a strongly aperiodic SFT. We also show that if a subgroup is realizable, its subgroup membership problem must be decidable. The article also contains the introduction of periodically rigid groups, which are groups for which every weakly aperiodic subshift of finite type is strongly aperiodic. We conjecture that the only finitely generated periodically rigid groups are virtually $\Z$ groups and torsion-free virtually $\Z^2$ groups. Finally, we show virtually nilpotent and polycyclic groups satisfy the conjecture. 
\vskip .1in
\noindent\keywords{Subshift of finite type, symbolic dynamics, realizability, aperiodic tilings.}

\end{abstract}

%

\section{Introduction}

For a finite alphabet $A$ and a group $G$, a subshift is a closed $G$-equivariant subset of the compact space $A^G$. These sets are used to model dynamical systems~\cite{morse1938symbolic, barbieri2024effective} and as computational models~\cite{aubrun2017effectiveness}. Subshifts also have an equivalent combinatorial definition: a subshift is the set of all configurations from $A^G$ that respect local rules given by forbidden patterns. Given a set of forbidden patterns $\mathcal{F}$, the subshift $\X_{\mathcal{F}}$ it defines is the set of configurations $x\in A^G$ that avoid all patterns from $\mathcal{F}$. A subshift of finite type (SFT) is a subshift $X$ such that there exists a finite set of forbidden patterns $\Fo$ that defines it, i.e. $X = \X_\Fo$. SFTs form an interesting class of subshifts since they are specified by a finite amount of information, and define models of computation whose computational power depends on the group~$G$~\cite{aubrun2018domino}.


There have been recent interest for two particular types of SFTs: weakly and strongly aperiodic SFTs. A subshift is said to be weakly aperiodic if the orbit of all its configurations are infinite, and strongly aperiodic if the action of the group on the subshift is free. The goal has been to understand which properties of the underlying group influence the existence of both types of aperiodic subshifts. For instance, the existence of strongly aperiodic subshifts has been found to be connected to the large-scale geometry of the group~\cite{cohen2017large}, and the computability of its word problem~\cite{jeandel2015aperiodic2}. A state of the art on the existence of these two types of aperiodic SFTs is presented in Sections~\ref{sec:strongly} and~\ref{sec:weakly}\\

To further understand the influence of the group on the aperiodicity and periods of its subshifts of finite type, in this article we study the problem of realizability of families of subgroups as the set of stabilizers of subshifts of finite type. This problem generalizes the existence of both weakly and strongly aperiodic SFTs; the former being the case when the family of subgroups contains only infinite index subgroups, and the latter the case when the family is the singleton containing the trivial subgroup. The two main instances of the problem we are interested in is the realizability of singletons -- when all stabilizers are the same subgroup -- and the realizability of non-trivial families of infinite index subgroups, that is, the existence of weakly aperiodic SFTs that are not strongly aperiodic.

\subsection*{Realizability of Subgroups}

Let $\Sub(G)$ denote the space of subgroups of $G$. We say a family of subgroup $\G\subseteq\Sub(G)$ is \define{realizable} is there exists a non-empty SFT $X\subseteq A^G$ such that the set of stabilizers of $X$ is equal to $\G$. He say a subgroup $H\in\Sub(G)$ is realizable if the family $\G = \{H\}$ is realizable. The main questions of this article are the following.
\begin{question}
	Which subsets of $\Sub(G)$ are realizable? Which subgroup of $G$ are realizable?
\end{question}

For subgroups, it is possible to quickly rule out the realizability of subgroup which are not normal (Lemma~\ref{lem:no_normal}). Furthermore, we obtain a characterization of finitely generated normal subgroups that are realizable.
\begin{thmx}
\label{intro:ida_y_vuelta}
    Let $N\trianglelefteq G$ be a non-trivial finitely generated normal subgroup. Then, $N$ is realizable in $G$ if and only if $G/N$ admits a strongly aperiodic SFT.
\end{thmx}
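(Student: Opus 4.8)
The statement is an ``if and only if'', so I would split it into the two implications and treat them via a correspondence between SFTs on $G$ whose stabilizers all equal $N$ and SFTs on the quotient $G/N$.

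For the direction ``$G/N$ admits a strongly aperiodic SFT $\Rightarrow$ $N$ is realizable in $G$'', the plan is to pull back. Let $\pi\colon G\to G/N$ be the quotient map and let $Y\subseteq A^{G/N}$ be a strongly aperiodic SFT. Define $X\subseteq A^G$ to be the set of configurations that are constant on each coset of $N$ and whose induced configuration on $G/N$ lies in $Y$; concretely $X=\{x\in A^G : \exists y\in Y,\ x = y\circ\pi\}$. The condition ``constant on cosets of $N$'' is itself an SFT condition when $N$ is finitely generated: for each generator $n$ of $N$ one forbids the patterns on $\{e,n\}$ that disagree. Since $Y$ is an SFT, $X$ is an SFT on $G$, and it is non-empty because $Y$ is. Every $x\in X$ is $N$-invariant by construction, so $N\le\stab(x)$; conversely if $g\in\stab(x)$ then $\pi(g)$ fixes the corresponding $y\in Y$, and freeness of the $G/N$-action on $Y$ forces $\pi(g)=e$, i.e. $g\in N$. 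Hence $\stab(x)=N$ for every $x\in X$, so $\{N\}$ is realized.

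For the converse, ``$N$ is realizable in $G$ $\Rightarrow$ $G/N$ admits a strongly aperiodic SFT'', the plan is to project. Suppose $X\subseteq A^G$ is a non-empty SFT whose stabilizers all equal $N$. First observe that every $x\in X$ is $N$-invariant (since $N=\stab(x)$), so $x$ factors through $\pi$ and induces a configuration $\bar x\in A^{G/N}$; let $Y$ be the image of $X$ under this map. I would check that $Y$ is again an SFT: a finite forbidden-pattern set $\Fo$ for $X$ can be transported to $G/N$, using that a configuration on $G/N$ avoiding the projected patterns lifts (via $\pi$, using $N$ finitely generated so that the lift is still an SFT-constraint-respecting configuration) to an $N$-invariant configuration on $G$ avoiding $\Fo$. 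The $G/N$-action on $Y$ is free: if $\pi(g)$ fixes some $\bar x\in Y$, then $g$ stabilizes the lift $x\in X$, so $g\in\stab(x)=N$, i.e. $\pi(g)=e$. Thus $Y$ is a non-empty strongly aperiodic SFT on $G/N$.

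The main obstacle I anticipate is the SFT-ness of the projected subshift $Y$ in the converse direction, and more precisely making the lifting argument rigorous: I need that any configuration on $G/N$ satisfying the ``projected'' local rules actually lifts to a configuration on $G$ satisfying the original finite rule set $\Fo$, and this is where finite generation of $N$ is essential (it guarantees that ``$N$-invariance'' is a finitary, SFT-encodable condition on $A^G$, and it lets me choose a uniform finite window in $G/N$ on which to read off the transported constraints). A secondary subtlety is verifying $Y\neq\varnothing$ and that no extra stabilizers sneak in — but both follow directly from the correspondence once the encoding is set up. I would also invoke Lemma~\ref{lem:no_normal} only implicitly here, since $N$ is assumed normal in the statement.
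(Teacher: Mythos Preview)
Your proposal is correct and follows essentially the same route as the paper: the two directions are handled by the pull-back $\pi^*$ and push-forward $\rho^*$ constructions respectively, and the stabilizer computations you sketch are exactly those of Propositions~\ref{prop:quotient} and~\ref{prop:vuelta}. The one point you flag as the main obstacle --- that the projected subshift $Y$ is again an SFT --- is resolved in the paper not by transporting an arbitrary finite $\Fo$, but by first replacing $X$ by a conjugate nearest-neighbor SFT over the specific generating set $\rho(S)\cup T$ (with $T$ generating $N$); then the $T$-constraints are absorbed by $N$-invariance and the $\rho(S)$-constraints descend verbatim to $S$-constraints on $G/N$ (Lemma~\ref{lem:patterns_push_forward}).
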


This theorem is a generalization of the well-known phenomenon that occurs in $\Z^2$, where any subgroup isomorphic to $\Z$ is not realizable (see Lemma~\ref{lem:OGPR}). The proof of Theorem~\ref{intro:ida_y_vuelta} also allows us to show that if we lift the SFT restriction, any normal subgroup of $G$ is realizable by a subshift on $G$ (Proposition~\ref{prop:no_reservations}).\\

For recursively presented finitely generated groups, we also obtain a connection between a subgroups realizability and the decidability of its membership problem.
\begin{thmx}
\label{intro:computational}
    Let $G$ be a finitely generated recursively presented group and $H$ a finitely generated subgroup. If $H$ is realizable, then the subgroup membership problem of $H$ in $G$ is decidable.
\end{thmx}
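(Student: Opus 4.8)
\emph{Proof plan.} The plan is to show that membership in $H$ is decidable by producing, to be run in parallel, a semi-decision procedure for ``$w\in H$'' and one for ``$w\notin H$'', where $w$ is the input word and $[w]\in G$ the element it represents. The first does not use realizability: fixing words $h_1,\dots,h_k$ generating $H$ and the recursive presentation $G=\langle S\mid R\rangle$ ($S$ finite, $R$ recursively enumerable), one dovetails over all words $u$ in the letters $h_1^{\pm 1},\dots,h_k^{\pm 1}$ while enumerating the words equal to the identity, halting as soon as some $uw^{-1}$ is found trivial; this halts precisely when $[w]\in H$.

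For the second procedure, let $X\subseteq A^G$ be a non-empty SFT with set of stabilizers $\{H\}$, defined by a finite set $\Fo$ of forbidden patterns. The decisive point is that, since every configuration of $X$ has stabilizer \emph{exactly} $H$, the element $[w]$ fixes all of $X$ when $[w]\in H$ and none of it when $[w]\notin H$; hence $[w]\notin H$ if and only if the closed set $F_w=\{x\in X:[w]\cdot x=x\}$ is empty. I would express $F_w$ as a decreasing intersection of clopen subsets of $A^G$, the $n$-th forbidding every translate of a pattern of $\Fo$ and every violation of $x(g)=x([w]^{-1}g)$ that is ``visible within radius $n$'' of the identity, so that by compactness of $A^G$ the emptiness of $F_w$ forces one of these clopen sets to be empty --- something one could hope to detect.

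The main obstacle is that $G$ need not have decidable word problem, so those clopen approximations, which mention genuine elements of $G$, cannot be tested directly; getting around this is the technical heart of the matter. The idea is to reason with words rather than group elements and to use, at stage $n$, only the equalities between words that have been confirmed after running the word problem (a semi-decision procedure, by recursive presentation) for $n$ steps: with $W_n$ the set of words of length at most $n$ and $\approx_n$ the equivalence relation on $W_n$ generated by the confirmed equalities, call stage $n$ \emph{feasible} if $W_n/\!\approx_n$ admits an $A$-labeling that displays no translate of a pattern of $\Fo$ fitting inside $W_n$ and that gives $b$ and $w^{-1}b$ equal labels whenever both lie in $W_n$ --- an effective finite test. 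One then checks: if $[w]\in H$, every stage is feasible, witnessed by restricting to $W_n$ a configuration of $X$ fixed by $[w]$; and if $[w]\notin H$, some stage is infeasible, for otherwise choosing a feasible labeling at each stage and applying compactness in the space of all $A$-labelings of words yields a labeling feasible at every stage, which --- since every true equality of words is eventually confirmed --- factors through $G$ and produces a configuration of $X$ fixed by $[w]$, forcing $[w]\in H$. So the search for an infeasible stage semi-decides ``$w\notin H$''.

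One can also shortcut for non-trivial $H$: a realizable subgroup is normal (Lemma~\ref{lem:no_normal}), so $H$ is the normal closure of $h_1,\dots,h_k$ and $G/H=\langle S\mid R\cup\{h_1,\dots,h_k\}\rangle$ is finitely generated and recursively presented; by Theorem~\ref{intro:ida_y_vuelta} it admits a strongly aperiodic SFT and therefore, by the known link between strong aperiodicity and computability of the word problem~\cite{jeandel2015aperiodic2}, has decidable word problem --- which is exactly the membership problem of $H$ in $G$ (the case $H=\{1\}$ being this statement for $G$ itself).
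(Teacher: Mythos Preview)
Your main argument is correct and is essentially the paper's proof, reorganized. The paper achieves your ``work with words, not group elements'' move in one stroke by pulling $X$ back along the quotient $\pi\colon\F_n\to G$ to an effective $\F_n$-subshift $Y=\pi^*(X)$ (effectivity coming precisely from the recursive enumerability of $R$), then intersecting with the $\F_n$-SFT $Z=\{y:u\cdot y=y\}$; your staged feasibility test is exactly the standard semi-algorithm (Lemma~\ref{lem:vacio_eff}) for emptiness of the effective subshift $Y\cap Z$, and your compactness step is the statement that $Y\cap Z=\varnothing$ iff $[u]\notin H$. The packaging via the pull-back is cleaner, but the content is the same.

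Your shortcut, however, is a route the paper does not take and is worth recording: once you observe that a realizable $H$ is normal (Lemma~\ref{lem:no_normal}) and that $G/H=\langle S\mid R\cup\{h_1,\dots,h_k\}\rangle$ is again recursively presented, Theorem~\ref{intro:ida_y_vuelta} hands you a strongly aperiodic SFT on $G/H$, and Jeandel's theorem (Theorem~\ref{thm:jeandel}) finishes. This reduces Theorem~\ref{intro:computational} entirely to Theorem~\ref{intro:ida_y_vuelta} plus the very result it was meant to generalize --- a pleasant collapse, at the cost of hiding that the whole thing is really a single direct compactness computation rather than two black boxes chained together.
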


This result generalizes a theorem by Jeandel~\cite{jeandel2015aperiodic2} (Theorem~\ref{thm:jeandel}), and is inspired by its proof. It implies that some hyperbolic groups and $\F_n\times\F_n$ have non-realizable subgroups (Examples~\ref{ex:Rips} and~\ref{ex:Mihailova}).\\

There are other algorithmic restrictions to realizability for subgroups of $\Z^d$ relating to formal language theory and computational complexity that we explore in Section~\ref{sec:restrictionsZd}.

\subsection*{Periodically Rigid Groups}

A folklore result for $\Z^2$-SFTs states that every weakly aperiodic SFT is also strongly aperiodic (see Lemma~\ref{lem:PR_Z2}). In other words, non-trivial subsets of $\Sub(\Z^2)$ comprised exclusively of infinite index subgroups are not realizable. The natural question that follows is, which groups exhibit this behaviour? To answer this question, we introduce the class of periodically rigid groups. A group $G$ is said to be \define{periodically rigid} if every weakly aperiodic SFT on $G$ is strongly aperiodic.\\

Inspired by a question of Pytheas-Fogg~\cite{pytheas2022conjecture}, we propose the following conjecture characterizing finitely generated periodically rigid groups.
\begin{conjx}
\label{intro:conjetura}
	A finitely generated group is periodically rigid if and only if it is either virtually $\Z$ or torsion-free virtually $\Z^2$.
\end{conjx}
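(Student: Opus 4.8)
I would split the conjecture into its two implications. The ``if'' direction — that virtually $\Z$ groups and torsion-free virtually $\Z^2$ groups are periodically rigid — should go through unconditionally. The ``only if'' direction is the substantial one, and, as the abstract already signals, I would only expect to complete it for finitely generated virtually nilpotent and polycyclic groups, leaving the general statement as a conjecture.

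\textbf{Sufficiency.} Let $G$ be finitely generated and virtually $\Z$, and fix an infinite cyclic subgroup $C\leq G$ of finite index. Restricting the action on any nonempty $G$-SFT $X$ to $C$ yields a system that is $C$-equivariantly conjugate to a $\Z$-SFT (the standard recoding over the alphabet $A^{G/C}$); since every nonempty $\Z$-SFT contains a periodic point and $[G:C]<\infty$, the SFT $X$ has a configuration of finite $G$-orbit and is not weakly aperiodic. Hence $G$ is vacuously periodically rigid. For a torsion-free virtually $\Z^2$ group I would use Bieberbach's classification of $2$-dimensional crystallographic groups: it is either $\Z^2$ or the Klein bottle group $K$. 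The case $\Z^2$ is Lemma~\ref{lem:PR_Z2}. For $K$, take the index-$2$ subgroup $L\cong\Z^2$; a weakly aperiodic $K$-SFT $X$ restricts (again via recoding) to a weakly aperiodic $\Z^2$-SFT — a configuration with finite $\Z^2$-orbit has finite $K$-orbit — so by Lemma~\ref{lem:PR_Z2} the $\Z^2$-action on $X$ is free; since every nontrivial $g\in K$ satisfies $g^2\in L\setminus\{1\}$ by torsion-freeness, the whole $K$-action is free and $X$ is strongly aperiodic.

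\textbf{Necessity for the special classes.} Arguing the contrapositive, suppose $G$ is finitely generated, infinite, not virtually $\Z$, and not torsion-free virtually $\Z^2$; I want a weakly aperiodic $G$-SFT that is not strongly aperiodic. The primary tool is Theorem~\ref{intro:ida_y_vuelta}: if $G$ has a nontrivial finitely generated normal subgroup $N$ of infinite index with $G/N$ admitting a strongly aperiodic SFT, then the SFT realizing $N$ has all stabilizers equal to $N$, hence is weakly aperiodic (infinite index) but not strongly aperiodic (nontrivial). A complementary tool for groups with torsion is the following co-induction: let $L\leq G$ be a torsion-free finite-index subgroup carrying a strongly aperiodic SFT $X_0\subseteq A^L$, and let $Y=\{y\in A^G:(\ell\mapsto y(g\ell))\in X_0\text{ for all }g\in G\}$, which is the $G$-SFT defined by the forbidden patterns of $X_0$ read inside $G$. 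Any stabilizer of a configuration of $Y$ meets $L$ trivially (its intersection with $L$ would be a translation-stabilizer of an $X_0$-configuration), hence is finite, so $Y$ is weakly aperiodic; and if $g\in G$ has prime order then, being torsion, $g\notin L$ and $g$ is not conjugate into $L$, so left multiplication by $g$ permutes $G/L$ freely with all cycles of length the order of $g$, and one can assemble a $g$-fixed configuration of $Y$ one coset-cycle at a time — so $Y$ is not strongly aperiodic, and $G$ is not periodically rigid. With these tools in hand: if $G$ is virtually nilpotent of growth degree $2$ it is virtually $\Z^2$, and being excluded it has torsion, so the co-induction with $L\cong\Z^2$ and Robinson's tiling applies; if the growth degree is $\geq 3$, one extracts from the lower central series of a normal finite-index torsion-free nilpotent subgroup — or from the abelianization — a nontrivial finitely generated normal $N$ with $G/N$ infinite and virtually nilpotent, hence not virtually $\Z$, hence admitting a strongly aperiodic SFT by the results surveyed in Section~\ref{sec:strongly}, and Theorem~\ref{intro:ida_y_vuelta} finishes. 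For polycyclic $G$ the same scheme runs by induction on the Hirsch length, using the Fitting subgroup to produce a nontrivial normal $\Z^k$ and then quotienting.

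\textbf{The main obstacle.} The sticking point, even within the polycyclic world, is the family of twisted extensions $\Z^2\rtimes_\phi\Z$ with $\phi\in GL_2(\Z)$ having no rational eigenline. Such a group is torsion-free, does not surject onto $\Z^2$, and its only evident normal subgroup, the fibre $\Z^2$, has virtually-$\Z$ quotient — so neither of the two tools above applies, and yet the conjecture predicts it is not periodically rigid. I would attack it by realizing the conjugacy class of the infinite cyclic subgroup $\langle t\rangle$ (with $t$ a generator of the acting $\Z$): a configuration with stabilizer $\langle t\rangle$ descends to $\langle t\rangle\backslash G\cong\Z^2$, on which the fibre acts by translations and $t$ acts by $\phi$, so such an SFT exists precisely when there is a strongly aperiodic $\Z^2$-SFT whose set of forbidden patterns is invariant under $\phi$. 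Producing such a $\phi$-symmetric aperiodic planar SFT — equivalently, a weakly aperiodic non-strongly aperiodic SFT on the $3$-dimensional solvmanifold group directly — is where I expect the real work to lie. Beyond that, the full conjecture is out of reach for the usual structural reasons: there is no structure theory for an arbitrary finitely generated group that would locate the needed normal subgroup, and it is not even known for which finitely generated groups strongly aperiodic SFTs exist.
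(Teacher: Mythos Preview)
Your sufficiency argument and the broad strategy for necessity are sound and close in spirit to the paper's. The crucial divergence is that the case you flag as ``the main obstacle'' --- torsion-free $\Z^2\rtimes_\phi\Z$ with $\phi$ having no rational eigenline --- is \emph{not} an obstacle, and the paper dispatches it with a tool you already have in hand but fail to deploy in this setting. You use the free extension (your ``co-induction'') only with a \emph{strongly} aperiodic SFT on a finite-index torsion-free subgroup together with a torsion element outside it. The paper's Lemma~\ref{lem:PR_exact} applies the same construction with a merely \emph{weakly} aperiodic SFT on a \emph{normal} subgroup together with a torsion-free element of the quotient: if $N\trianglelefteq G$ admits a weakly aperiodic SFT $X$ and $G/N$ contains a torsion-free element, then $\upa{X}$ is weakly aperiodic (inherited from $X$) but not strongly aperiodic, because any $g\in G$ mapping to a torsion-free element of $G/N$ has no conjugate of any power lying in $N\setminus\{1\}$ (by normality), so Theorem~\ref{thm:freeX} produces a $g$-fixed configuration of $\upa{X}$. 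For $G=\Z^2\rtimes_\phi\Z$ this immediately yields a weakly-but-not-strongly aperiodic SFT, with no need for a $\phi$-invariant aperiodic planar tileset or for realizing the conjugacy class of $\langle t\rangle$ exactly.

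With this extra lemma the paper's induction on Hirsch length for polycyclic $G$ runs cleanly: reduce to torsion-free via your co-induction argument (the paper's Proposition~\ref{prop:PR_torsion-free}); take a normal $\Z^k$; if $k\geq 2$ apply Lemma~\ref{lem:PR_exact} to $1\to\Z^2\to G\to G/\Z^2\to 1$, and if $k=1$ pass to the quotient and invoke the inductive hypothesis via Lemma~\ref{lem:PR_quotient}. Your preferred route through Theorem~\ref{intro:ida_y_vuelta} --- locating $N\trianglelefteq G$ with $G/N$ admitting a strongly aperiodic SFT --- is strictly more demanding, since it forces $h(G/N)\geq 2$, and fails precisely on the Hirsch-length-$3$ solvable groups you isolated; the free-extension-from-below is the complementary tool that closes the gap. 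A minor further slip: in your nilpotent paragraph, ``$G/N$ infinite and virtually nilpotent'' does not by itself give ``not virtually $\Z$'', so the existence of an $N$ with $h(G/N)\geq 2$ still needs an argument --- again something the Hirsch-length induction supplies automatically.
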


This conjecture has been shown to hold for many classes of groups (without the explicit intention to do so), such as Baumslag-Solitar groups~\cite{esnay2022weakly} and hyperbolic groups~\cite{coornaert2006symbolic,gromov1987hyperbolic} (see Section~\ref{sec:PeriodicRigidity}).\\

We provide many inheritance properties for the class of periodically rigid groups, and prove that the conjecture holds for virtually nilpotent and polycyclic groups.
\begin{thmx}
\label{intro:vnilp}
    Finitely generated virtually nilpotent groups are periodically rigid if and only if they are not virtually $\Z$ or torsion-free virtually $\Z^2$.
\end{thmx}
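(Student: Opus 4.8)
\emph{Plan.} I would prove the two implications separately: the backward one is short, and for the forward one I organize the argument by the polynomial growth degree $d(G)$, which is finite by virtual nilpotency (Gromov, Bass--Guivarc'h). For the backward direction, first suppose $G$ is virtually $\Z$ (or finite): then $G$ admits no weakly aperiodic SFT, since restricting any SFT $X\subseteq\Sigma^G$ to a finite-index subgroup isomorphic to $\Z$ gives a $\Z$-SFT (finite-index restriction preserves finite type), which contains a periodic point, and this point lifts to a configuration of $X$ with finite $G$-orbit; hence $G$ is vacuously periodically rigid. Now suppose $G$ is torsion-free virtually $\Z^2$ and fix $A\leq G$ of finite index with $A\cong\Z^2$: if $X\subseteq\Sigma^G$ is weakly aperiodic, then $X$ restricted to $A$ is an $A$-SFT and is still weakly aperiodic, so by periodic rigidity of $\Z^2$ (Lemma~\ref{lem:PR_Z2}) it is strongly aperiodic over $A$, i.e. $\stab_G(x)\cap A=\{e\}$ for all $x\in X$; thus $\stab_G(x)$ is finite, and as $G$ is torsion-free, $\stab_G(x)=\{e\}$, so $X$ is strongly aperiodic and $G$ is periodically rigid.

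\emph{Forward direction, case $d(G)\geq 3$.} Arguing by contraposition, assume $G$ is finitely generated virtually nilpotent, not virtually $\Z$ and not torsion-free virtually $\Z^2$; I must produce a weakly aperiodic SFT on $G$ that is not strongly aperiodic (finite groups are periodically rigid and, under the usual convention, counted among the virtually $\Z$ groups, so they do not occur here). If $d(G)\geq 3$, choose a finite-index torsion-free nilpotent subgroup $\Gamma\leq G$; it has growth degree $\geq 3$, hence is non-cyclic, and passing to its abelianization and then a coordinate projection yields a surjection $\Gamma\twoheadrightarrow\Z^2$ whose kernel $N\trianglelefteq\Gamma$ is finitely generated, non-trivial and of infinite index. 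Since $\Gamma/N\cong\Z^2$ admits a strongly aperiodic SFT, Theorem~\ref{intro:ida_y_vuelta} makes $N$ realizable in $\Gamma$; the realizing SFT has all stabilizers equal to $N$, hence is weakly aperiodic ($[\Gamma:N]=\infty$) but not strongly aperiodic ($N\neq\{e\}$), so $\Gamma$ is not periodically rigid. Since periodic rigidity passes to finite-index subgroups --- one of the inheritance properties, which follows from the standard induction of subshifts from a finite-index subgroup (it preserves finite type, sends weakly aperiodic SFTs to weakly aperiodic SFTs, and sends non--strongly-aperiodic ones to non--strongly-aperiodic ones) --- it follows that $G$ is not periodically rigid.

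\emph{Forward direction, case $d(G)=2$.} Here $G$ is virtually $\Z^2$ (a virtually nilpotent group of quadratic growth is virtually $\Z^2$) and, by hypothesis, has torsion. Fix a normal finite-index subgroup $A\trianglelefteq G$ with $A\cong\Z^2$ (the core of a finite-index $\Z^2$), put $F=G/A$, and let $\psi_{\bar g}\in\Aut(A)$ denote conjugation by $g$. Fix any strongly aperiodic $\Z^2$-SFT $\widehat Y$ on $A$ and, for $\bar g\in F$, set $\widehat Y_{\bar g}:=\widehat Y\circ\psi_{\bar g}$, again a strongly aperiodic $\Z^2$-SFT on $A$. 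Define $X:=\{x\in\Sigma^G:\ (a\mapsto x(ag))\in\widehat Y_{\bar g}\ \text{for all }g\in G\}$ (well defined on cosets since the $\widehat Y_{\bar g}$ are shift-invariant). A short computation using normality of $A$ shows that the family $(\widehat Y_{\bar g})_{\bar g\in F}$ is $G$-equivariant in exactly the way needed for $X$ to be $G$-invariant, and $X$ is plainly a nonempty SFT. For $x\in X$, $x|_A\in\widehat Y$ is strongly aperiodic over $A$, so $\stab_G(x)\cap A=\{e\}$, hence $\stab_G(x)$ is finite and $X$ is weakly aperiodic. Finally, since $G$ has torsion and $A$ is torsion-free there is $s\in G$ of finite order $m\geq 2$ with $s\notin A$; picking one coset representative in each $\langle s\rangle$-orbit of $F$, assigning to it an arbitrary element of the corresponding $\widehat Y_{\bar g}$, and extending $s$-periodically produces a configuration $x\in X$ fixed by $s$ --- the choice $\widehat Y_{\bar g}=\widehat Y\circ\psi_{\bar g}$ is tailored so that the $s$-periodic extension automatically meets the defining conditions of $X$, and $\psi_{\bar s}^{\,m}=\mathrm{id}$ makes the construction close up. So $X$ is not strongly aperiodic and $G$ is not periodically rigid.

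\emph{Main obstacle.} I expect the delicate part to be the case $d(G)=2$: verifying that $X$ is genuinely $G$-invariant and, above all, that the $s$-periodic configuration really lands inside $X$. This is precisely where the ``twisted'' family $\widehat Y_{\bar g}=\widehat Y\circ\psi_{\bar g}$ --- rather than a single copy of $\widehat Y$, which need not be invariant under the conjugation action of $G$ on $A$ --- does the work, and where one must use that $s$ has finite order. A secondary point is ensuring the inheritance property (periodic rigidity passes to finite-index subgroups) is on hand in the precise form used for the case $d(G)\geq 3$.
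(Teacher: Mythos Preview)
Your backward direction is correct, and so is your $d(G)=2$ case --- in fact, unwinding the twist $\widehat Y_{\bar g}=\widehat Y\circ\psi_{\bar g}$ shows that your $X$ is precisely the free extension $\widehat Y^{\uparrow}$ (the condition $(a\mapsto x(ag))\in\widehat Y\circ\psi_{\bar g}$ rewrites as $(a\mapsto x(ga))\in\widehat Y$), so that part is essentially the paper's Proposition~\ref{prop:PR_torsion-free} carried out by hand.

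The gap is in the $d(G)\geq 3$ step. You assert that periodic rigidity passes to finite-index subgroups, justified by the claim that the free extension (``induction'') preserves the property of being non--strongly-aperiodic. That claim is not true in general, and Theorem~\ref{thm:freeX} pinpoints the obstruction: for $h\in\Gamma\setminus\{1\}$ stabilizing some $x\in X$, one still has $h\in\Free(X^{\uparrow})$ whenever some conjugate $th^nt^{-1}$ with $t\in G$ lands in $\Free(X)$. In your setup $X$ realizes $N=\ker(\Gamma\twoheadrightarrow\Z^2)$, so $\Free(X)=\Gamma\setminus N$; for $t\notin\Gamma$ nothing forces $tNt^{-1}\subseteq N$, since you never arranged $N$ to be normal in $G$. (Note also that the paper's Proposition~\ref{prop:virt_PR} is the \emph{converse} implication --- $H$ periodically rigid and $G$ torsion-free imply $G$ periodically rigid --- so it does not supply the inheritance you need either.)

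The paper's route differs exactly here: it works with \emph{normal} subgroups and quotients rather than lifting from a finite-index subgroup. It first treats nilpotent groups (Theorem~\ref{thm:PR_nil}) by induction on Hirsch length: for $G$ torsion-free nilpotent with $h(G)\geq 3$, one quotients by a central copy of $\Z$ --- automatically normal --- and applies Lemma~\ref{lem:PR_quotient}. The virtually nilpotent case then handles torsion via Proposition~\ref{prop:PR_torsion-free} (exactly what your $d=2$ argument does) and reduces the torsion-free case to the nilpotent one. If you want to repair your argument in place, the fix is to make the kernel normal in $G$: take $\Gamma\trianglelefteq G$ a \emph{normal} finite-index torsion-free nilpotent subgroup and use a \emph{characteristic} subgroup of $\Gamma$ (such as $Z(\Gamma)$) in the role of $N$, so that Lemma~\ref{lem:PR_quotient} applies directly to $G$.
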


\begin{thmx}
\label{intro:poly}
	Finitely generated polycyclic groups are periodically rigid if and only if they are not virtually $\Z$ or torsion-free virtually $\Z^2$.
\end{thmx}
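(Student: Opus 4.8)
The plan is to show that Conjecture~\ref{intro:conjetura} holds for a polycyclic $G$, i.e.\ that $G$ is periodically rigid precisely when it is virtually $\Z$ or torsion-free virtually $\Z^{2}$; the forward implication (periodic rigidity forces one of these two shapes), argued contrapositively, carries all the work. Assume $G$ infinite, a finite group being vacuously periodically rigid. For the reverse implication: a virtually $\Z$ group admits no weakly aperiodic SFT at all, since restricting such an SFT to a finite-index copy of $\Z$ yields a weakly aperiodic $\Z$-SFT, whereas every nonempty $\Z$-SFT has a periodic point; so it is vacuously periodically rigid. If $G$ is torsion-free virtually $\Z^{2}$, with $L\cong\Z^{2}$ of finite index, and $X$ is a weakly aperiodic $G$-SFT with a configuration $x$ such that $\stab_{G}(x)\neq 1$, then $\stab_{G}(x)$ is infinite (non-trivial and torsion-free), hence meets $L$ non-trivially, so $x$ has non-trivial stabilizer in $X|_{L}$; but $X|_{L}$ is a weakly aperiodic $\Z^{2}$-SFT (restriction to a finite-index subgroup preserves weak aperiodicity), hence strongly aperiodic by Lemma~\ref{lem:PR_Z2} --- a contradiction. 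So $G$ is periodically rigid. (Both assertions are instances of the inheritance properties of Section~\ref{sec:PeriodicRigidity}.)

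For the forward implication I would induct on the Hirsch length $h(G)$, proving that an infinite polycyclic $G$ that is neither virtually $\Z$ nor torsion-free virtually $\Z^{2}$ admits a nonempty weakly-but-not-strongly aperiodic SFT. Three reductions drive the induction. \emph{(R1)} If $G\twoheadrightarrow Q$ with non-trivial kernel $K$ (finitely generated, since $G$ is polycyclic) and $Q$ admits a weakly aperiodic SFT, then its pullback to $G$ has all stabilizers containing $K$ --- so it is not strongly aperiodic --- and all orbits infinite --- so it is weakly aperiodic. \emph{(R2)} If $N\trianglelefteq G$ (automatically finitely generated) admits a weakly-but-not-strongly aperiodic SFT $\bar X$, then the $G$-SFT defined by the \emph{same} forbidden patterns, namely $\{x\in A^{G}: x|_{gN}\in\bar X \text{ for every coset } gN\}$, is again weakly-but-not-strongly aperiodic: making a configuration periodic along a chosen transversal to the $N$-cosets produces a non-trivial stabilizer, while a finite-index stabilizer of any configuration would restrict to a finite-index stabilizer of one of its coset restrictions, contradicting weak aperiodicity of $\bar X$. \emph{(R3)} If a finite-index subgroup of $G$ is not periodically rigid, neither is $G$, by inducing its SFT up through the finitely many cosets. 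The base case $h(G)\le 2$ follows from Theorem~\ref{intro:vnilp}, since a polycyclic group of Hirsch length at most $2$ is virtually $\Z^{2}$, virtually $\Z$, or finite.

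Let $h(G)\ge 3$ and let $A=G^{(k)}\neq 1$ be the last non-trivial term of the derived series --- a characteristic, finitely generated abelian subgroup, $A\cong\Z^{r}\times F$ with $F$ finite. If $G$ is virtually nilpotent we are done by Theorem~\ref{intro:vnilp}; otherwise $G$ has exponential growth, so $r<h(G)$, and (if $F=1$) $r\ge 1$. If $F\neq 1$, then $F\trianglelefteq G$ is a non-trivial finite normal subgroup, $G/F$ is infinite polycyclic and not virtually $\Z$, hence admits a strongly aperiodic SFT by the results of Section~\ref{sec:strongly}, so $F$ is realizable (Theorem~\ref{intro:ida_y_vuelta}) and, since $[G:F]=\infty$, realized by a weakly-but-not-strongly aperiodic SFT. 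If $F=1$ and $r\le h(G)-2$, then $G/A$ is infinite polycyclic and not virtually $\Z$, hence carries a weakly aperiodic SFT (Section~\ref{sec:weakly}), and (R1) concludes. If $F=1$ and $r=h(G)-1$: when $r\ge 3$, $A\cong\Z^{r}$ is a normal subgroup that is not periodically rigid (by induction, or directly by realizing a rank-one subgroup via Theorem~\ref{intro:ida_y_vuelta}), so (R2) concludes; when $r=2$, i.e.\ $h(G)=3$, an analysis of the Fitting subgroup shows that $G$ is virtually $\Z^{2}\rtimes_{\psi}\Z$ with $\psi\in\mathrm{GL}_{2}(\Z)$ Anosov.

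This last, \emph{Sol} case is the one I expect to be the main obstacle. Here all three reductions are vacuous: the infinite-index normal subgroups of $\Z^{2}\rtimes_{\psi}\Z$ are just the finite-index subgroups of the normal $\Z^{2}$, which is periodically rigid; every proper quotient is virtually $\Z$ or finite; and every finite-index subgroup is again of this shape. A direct construction is therefore required, and the plan is to realize a non-trivial conjugation-closed family of infinite-index subgroups --- concretely, to produce an SFT on $\Z^{2}\rtimes_{\psi}\Z$ whose configurations are invariant under a conjugate of the monodromy generator $t$ (equivalently, periodic along the suspension direction) and carry, transversally, an aperiodic pattern that forbids any larger, in particular any finite-index, stabilizer. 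This is the polycyclic analogue of the weakly-but-not-strongly aperiodic SFTs built on Baumslag--Solitar groups in~\cite{esnay2022weakly}; making the transverse aperiodic structure compatible with both (irrational) eigendirections of $\psi$ --- so that no finite-index period can sneak in --- is the core difficulty. Once $\Z^{2}\rtimes_{\psi}\Z$ is settled, its finite extensions follow from (R3), which completes the induction.
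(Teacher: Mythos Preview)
Your proposal has a genuine gap: you treat the Sol case $\Z^{2}\rtimes_{\psi}\Z$ as requiring an ad hoc construction, when in fact it falls immediately to a slight strengthening of your reduction (R2) that you have overlooked. Your (R2) demands that the normal subgroup $N$ carry a weakly-\emph{but-not-strongly} aperiodic SFT, and you then correctly observe that $\Z^{2}$ is periodically rigid, so (R2) as stated is vacuous for Sol. But the hypothesis on $N$ can be weakened to merely admitting a weakly aperiodic SFT, provided $G/N$ contains a torsion-free element; this is Lemma~\ref{lem:PR_exact} in the paper. The point is that the free extension $\upa{X}$ fails to be strongly aperiodic for a reason that has nothing to do with stabilizers inside $N$: if $g\in G$ maps to a torsion-free element of $G/N$, then no conjugate of any power of $g$ lies in $N\setminus\{1\}\supseteq\Free(X)$, so by Barbieri's characterization (Theorem~\ref{thm:freeX}) $g\notin\Free(\upa{X})$. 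Applied to $N=\Z^{2}\trianglelefteq \Z^{2}\rtimes_{\psi}\Z$ with $X$ any strongly aperiodic $\Z^{2}$-SFT and $g=t$ the generator of the $\Z$ factor, this gives a weakly-but-not-strongly aperiodic SFT on Sol with no further work.

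Relatedly, your justification for (R2) itself --- ``making a configuration periodic along a chosen transversal produces a non-trivial stabilizer'' --- does not go through as written: for a periodic extension $y(tn)=\bar x(n)$, an element $h\in N$ lies in $\stab(y)$ only if \emph{every} $G$-conjugate of $h$ lies in $\stab_{N}(\bar x)$, which is not automatic when $G$ acts non-trivially on $N$ by conjugation. The robust argument is again via Theorem~\ref{thm:freeX}. Once you have Lemma~\ref{lem:PR_exact}, the paper's induction is considerably simpler than your derived-series case split: a torsion-free polycyclic $G$ with $h(G)\ge 3$ contains a normal $\Z^{k}$; if $k\ge 2$ apply Lemma~\ref{lem:PR_exact} directly (the quotient has positive Hirsch length, hence a torsion-free element), and if $k=1$ apply induction plus Lemma~\ref{lem:PR_quotient} since the quotient has Hirsch length $\ge 2$.
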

 \subsection*{Canonical Constructions}
To obtain results on realizability, we will make extensive use of subshift constructions that allow us to move subshifts between groups. These constructions are the following:
\begin{itemize}
    \item Free extension: this construction lifts a subshift from a subgroup to its overgroup, preserving non-emptiness and SFTs. We will recurrently use the description of its stabilizer by Barbieri~\cite{barbieri2023aperiodic_non_fg} (see Theorem~\ref{thm:freeX}).
    \item Higher power: this construction takes a subshift from a group to a finite index subgroup. It is a generalization of the classic higher power construction from one-dimensional symbolic dynamics~\cite{lind2021introduction}.
    \item Pull-back: this construction moves a subshift from a quotient to the original group. It preserves non-emptyness, and being an SFT provided the kernel of the quotient is finitely generated. We describe its stabilizer in Proposition~\ref{prop:quotient}. 
    \item Push-forward: this construction moves a subshift from a group to a quotient, provided the subshift is stabilized by the kernel of the projection. Once again, this construction preserves non-emptiness, and preserves SFTs for particular generating sets provided the kernel is finitely generated. We describe its stabilizer in Proposition~\ref{prop:vuelta}. 
\end{itemize}
We provide basic properties of these constructions and short accounts of the results for which they have been used in the literature.
\paragraph{Structure of the Article}

We start with Section~\ref{sec:background}, where we provide the necessary background on symbolic dynamics and group theory. We also provide a short survey on the problems of the existence of strongly aperiodic SFTs and weakly aperiodic SFTs.  Next, Section~\ref{sec:canonical} introduces the four canonical constructions we use throughout the article. The study of realizability begins in earnest in Section~\ref{sec:realizability}, where we provide general properties and prove Theorem~\ref{intro:ida_y_vuelta}. We study the connections between realizability and computability in Section~\ref{sec:computational}. In Section~\ref{sec:PeriodicRigidity}, we introduce and study periodically rigid groups. Finally, in Section~\ref{subsec:nilp_poly}, we prove Conjecture~\ref{intro:conjetura} holds for virtually nilpotent groups and polycyclic groups.

\paragraph{Conventions and Notation} Throughout the article we suppose groups are infinite, unless explicitly stated. The empty word is denoted by $\epsilon$. Finite subsets are denoted by $F\Subset G$. We denote the free group defined by the free generating set of size $n$ by $\F_n$, and $\F_S$ the free group generated by $S$.

\section{Background and Definitions}
\label{sec:background}

\subsection{Symbolic Dynamics}
\label{sec:symb}

Let $G$ be a finitely generated group, and $A$ a non-empty finite set which we call the \define{alphabet}. Elements of $A$ are referred to as \define{letters}, \define{symbols} or \define{tiles} depending of the context. The space of \define{configurations} or \define{tilings} with alphabet $A$ over $G$ is the set $A^G = \{x\colon G\to A\}$. This space is endowed with a left group action $G\act A^{G}$ given by
$$(g\cdot x)(h) = x(g^{-1}h),$$
for all $x\in A^G$ and $h\in G$. This action is referred to as the \define{shift}. The dynamical system $(A^G, G)$ is called the \define{full $G$-shift} over $A$. \\


A \define{pattern} is a map $p\in A^{F}$, where $F$ is a finite subset of $G$ called the \define{support} of $p$. We denote this by $\supp(p) = F$.  We denote the set of all patterns by $A^{\ast G}$. We say a pattern $p$ \define{appears in a configuration} $x\in A^G$, denoted $p\factor x$, if there exists $g\in G$ such that $p(h) = x(gh)$ for all $h\in \supp(p)$. 

Given a set of patterns $\Fo$, we define the $G$-\define{subshift} $\X^G_{\Fo}$ as the set of configurations where no pattern from $\Fo$ appears. That is,
$$\X^G_{\mathcal{F}} = \{x\in A^G \mid \forall p\factor x,\  p \notin\Fo \}.$$

When the group is clear from context we drop the superscript $G$ and write $\X_{\Fo}$ for the $G$-subshift generated by $\Fo$.
A subshift $X\subseteq A^G$ is a $G$-\define{subshift of finite type} ($G$-SFT) if there exists a finite set of forbidden patterns $\Fo$ such that $X = \X_{\Fo}$. We will simply write SFT when the group is clear from context.\\

Let $S$ be a finite generating set for $G$. We say a pattern $p$ is \define{nearest neighbor} if its support is given by $\{1_G, s\}$ for some $s\in S$. We denote nearest neighbor patterns through tuples $(a,b,s)$ representing $p(1_G) = a$ and $p(s) = b$. A subshift defined by a set of nearest neighbor forbidden patterns is known as a \define{nearest neighbor subshift}. These subshifts are necessarily SFTs, as the maximal number of such patterns is bounded by $|A|^2\cdot|S|$. Nearest neighbor SFTs will help us simply the task of searching for an SFT that realizes a given family of subgroups. 

\begin{definition}
    Two $G$-subshifts $X\subseteq A^{G}$ and $Y\subseteq B^G$ are \define{conjugate} if there exists a bijective continuous $G$-equivariant map $\phi:X\to Y$.
\end{definition}

\begin{lemma}
	For $G$ a finitely generated group with finite generating set $S$, every SFT is conjugate to a nearest neighbor SFT with respect to $S$.
\end{lemma}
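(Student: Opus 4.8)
The plan is to reduce the support of the forbidden patterns to pairs of adjacent elements, at the cost of enlarging the alphabet. Let $X = \X_\Fo \subseteq A^G$ be an SFT with $\Fo$ finite. First I would fix a finite symmetric generating set $S$ and choose $n$ large enough that every $p \in \Fo$ has support contained in the ball $B_n = B_S(1_G, n)$; enlarging supports (by considering all extensions of a forbidden pattern to $B_n$) does not change the subshift, so we may assume every forbidden pattern has support exactly $B_n$. The new alphabet is $B = A^{B_n}$, i.e.\ a symbol records the $A$-contents of an entire $n$-ball. Define $\phi \colon X \to B^G$ by $\phi(x)(g) = (h \mapsto x(gh))|_{B_n}$, the pattern of $x$ read in the $n$-ball around $g$.

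Next I would identify the nearest-neighbor rules on $B$ that cut out $\phi(X)$. There are two kinds. First, \emph{consistency} rules: for each generator $s \in S$ and each pair $(a, b) \in B^2$ that is incompatible — meaning there exist $h, h' \in B_n$ with $sh = h'$ but $a(h) \ne b(h')$ — forbid the nearest-neighbor pattern $(a, b, s)$. This forces a configuration $y \in B^G$ in the image to glue into a genuine configuration $x \in A^G$ via $x(g) = y(g)(1_G)$, with $y(g) = (h \mapsto x(gh))|_{B_n}$. Second, \emph{local legality} rules: forbid (as patterns of support $\{1_G\}$, or equivalently absorb into one of the nearest-neighbor constraints) every symbol $a \in B$ that, viewed as a pattern on $B_n$, contains an occurrence of some $p \in \Fo$. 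Since $\Fo$ is finite and each $a$ ranges over a finite set, both rule sets are finite, so we get a nearest-neighbor SFT $Y \subseteq B^G$.

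Then I would check that $\phi \colon X \to Y$ is a conjugacy. It is clearly continuous (it depends on finitely many coordinates) and $G$-equivariant (reading a shifted $n$-ball). Injectivity is immediate since $x(g) = \phi(x)(g)(1_G)$ recovers $x$ from $\phi(x)$. Surjectivity onto $Y$: given $y \in Y$, define $x(g) = y(g)(1_G)$; the consistency rules force $y(g)(h) = x(gh)$ for all $h \in B_n$, hence $y = \phi(x)$, and the legality rules force $x$ to avoid $\Fo$, so $x \in X$. Finally $\phi^{-1}$, being the map $y \mapsto (g \mapsto y(g)(1_G))$, is continuous and equivariant. Hence $X \cong Y$ with $Y$ nearest neighbor with respect to $S$.

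The main obstacle — really a bookkeeping point rather than a genuine difficulty — is handling the single-point "legality" forbidden patterns: strictly, a nearest-neighbor pattern must have support $\{1_G, s\}$, not $\{1_G\}$, so one should fold each forbidden symbol $a \in B$ into the nearest-neighbor alphabet by, say, forbidding $(a, b, s)$ for \emph{all} $b$ and all $s$ (and symmetrically $(b,a,s)$), which is still a finite set and has the same effect since $G$ is infinite and every vertex of the Cayley graph has a neighbor. One should also take care that $S$ is symmetric so that "nearest neighbor" constraints in direction $s$ and $s^{-1}$ are consistent; this is automatic from the definition of a generating set of a Cayley graph, or can be arranged by replacing $S$ with $S \cup S^{-1}$.
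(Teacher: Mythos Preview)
Your argument is correct and is precisely the standard higher-block recoding; note that the paper does not actually give a proof of this lemma but defers to \cite{aubrun2018domino}, whose argument is exactly this construction (the paper even remarks in Section~\ref{subsec:higher} that the higher-block shift ``has been used by Aubrun, Barbieri and Jeandel to prove every SFT is conjugate to a nearest neighbor SFT''). So you are supplying what the paper omits, via the same route.

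One small bookkeeping slip worth fixing: in your consistency rule you forbid $(a,b,s)$ when there exist $h,h'\in B_n$ with $sh=h'$ and $a(h)\neq b(h')$, but with the paper's conventions ($p(1_G)=a$, $p(s)=b$, and $\phi(x)(g)(h)=x(gh)$) the overlap condition should read $h=sh'$, i.e.\ $a(sh')=b(h')$ whenever both $sh',h'\in B_n$. This does not affect the overall correctness since $S$ is symmetric and the constraint in direction $s^{-1}$ recovers the right condition, but it makes the ``$y(g)(h)=x(gh)$'' induction in your surjectivity step go through cleanly: writing $h=s_1\cdots s_k$ as a geodesic, the chain $h_i=s_{i+1}\cdots s_k$ stays in $B_n$ and the consistency rules give $y(g)(h)=y(gs_1)(h_1)=\cdots=y(gh)(1_G)=x(gh)$.
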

A proof of this lemma can be found in~\cite{aubrun2018domino}.\\

The \define{orbit} of a configuration $x\in A^G$ is the set of configurations 
\[
\orb(x)=\left\{g\cdot x \mid g\in G \right\}
\]
and its \define{stabilizer} is the subgroup
\[
\stab(x)=\left\{g\in G \mid g\cdot x = x \right\}.
\]

In words, the stabilizer of a configuration $x$ is the set of elements whose action on $x$ leave $x$ unchanged. Notice that the stabilizer is a subgroup of $G$. Elements of the stabilizer are called \define{periods} of the configuration. We define the set of stabilizers of a $G$-subshift $X$ as 
$$\stab(X) = \{\stab(x) \mid x\in X\}.$$
We say a configuration $x\in A^G$ is \define{periodic} if $\stab(x)$ is a finite index subgroup, and \define{aperiodic} if $\stab(x) = \{1_G\}$.
\begin{remark}
	In the literature periodic configurations are sometimes called strongly periodic, and configurations with non-trivial stabilizers are called weakly periodic. We do not use this terminology.
\end{remark}

\begin{definition}
	We say a subshift $X\subseteq A^G$ is
	\begin{itemize}
		\item \define{weakly aperiodic} if every configuration has an infinite orbit. Equivalently, if $X$ contains no periodic points.\index{subshift! weakly aperiodic}
		\item \define{strongly aperiodic} if the stabilizer of every configuration is trivial. \index{subshift! strongly aperiodic}
	\end{itemize}
\end{definition}

\begin{remark}
	Although the empty set is both a strongly and weakly aperiodic subshift, for the purposes of this work we \underline{do not} consider the empty subshift when talking about aperiodicity, unless explicitly stated.
\end{remark}

\begin{definition}
    The \define{kernel} of a subshift $X\subseteq A^G$ is 
    $$\ker(X) = \bigcap_{x\in X}\stab(x).$$
    Equivalently, $g\in\ker(X)$ if $g\cdot x = x$ for all $x\in X$.
\end{definition}

\subsection{Group presentations and the word problem}
\label{sec:groups}

Let $G$ be a finitely generated group and $S$ a finite generating set. We will only consider finite \define{symmetric} generating sets, that is, generating sets such that $S=S^{-1}$. Elements in the group are represented as words on the alphabet $S$ through the evaluation function $w\mapsto \overline{w}$. Two words $w$ and $v$ represent the same element in $G$ when $\overline{w} = \overline{v}$, and we denote this by $w =_G v$. We denote the identity of a group $G$ by $1_G$.

\begin{definition}
	Let $G$ be a group. We say $(S, R)$ is a \define{presentation} of $G$, denoted $G = \langle S \mid R\rangle$, if the group is isomorphic to  $\langle S \mid R\rangle = \F_S/\llangle R\rrangle$, where $\llangle R\rrangle$ is the normal closure of $R$, i.e. the smallest normal subgroup of $\F_S$ containing $R$.
\end{definition}

 We say $G$ is \define{finitely presented} if it has a presentation $(S,R)$ where $S$ and $R$ are finite, and \define{recursively presented} if there exists a presentation $(S,R)$ such that $S$ is recursive and $R$ is recursively enumerable.
 
For a group $G$ and a generating set $S$, we define:
  $$\WP(G, S) = \{w\in S^{*} \ | \ w=_{G} \epsilon\}.$$

 \begin{definition}
  The \define{word problem} of a group $G$ with respect to a set of generators $S$ is the following decision problem: given a word $w\in S^*$, determine whether $w\in \WP(G,S)$.
 \end{definition}

\begin{remark}
A simple computation shows that for two different generating sets of $G$, $S_1$ and $S_2$, the word problem with respect to $S_1$ is many-one equivalent to the word problem with respect to $S_2$. We can therefore talk about the word problem of the \emph{group}, which we denote by $\WP(G)$. We also denote by co$\WP(G)$ the complement of the word problem.
\end{remark}

A key connection between the presentation of a group and its word problem is the following.
\begin{proposition}
\label{prop:WP}
	Let $G$ be a finitely generated group. $G$ is recursively presented if and only if $\WP(G)$ is recursively enumerable.
\end{proposition}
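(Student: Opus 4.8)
The plan is to prove both directions directly from the definitions, using the fact that $\WP(G)$ is many-one equivalent across generating sets so we may fix any convenient finite generating set $S$.

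($\Leftarrow$) Suppose $\WP(G,S)$ is recursively enumerable for some finite symmetric generating set $S$. I would take the presentation $(S,R)$ where $R = \WP(G,S)$ itself: since $G \cong \F_S/\llangle R\rrangle$ precisely because a word evaluates trivially in $G$ iff it lies in the kernel of $\F_S \to G$, and that kernel is exactly $\llangle \WP(G,S)\rrangle$ (indeed it equals $\WP(G,S)$ as a subset of $\F_S$ once we pass to reduced words — but taking the normal closure does no harm). Then $S$ is finite hence recursive, and $R = \WP(G,S)$ is recursively enumerable by hypothesis, so $G$ is recursively presented.

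($\Rightarrow$) Suppose $G$ is recursively presented, say $G = \langle S' \mid R\rangle$ with $S'$ recursive and $R$ recursively enumerable; passing to a finite generating set $S$ (and adjusting, using that $G$ is finitely generated and that word problems across generating sets are many-one equivalent) we may assume $S$ is finite and $R$ is recursively enumerable on $\F_S$. A word $w \in S^*$ satisfies $w =_G \epsilon$ iff $w$ lies in $\llangle R\rrangle$, i.e.\ iff $w$ can be written in $\F_S$ as a product $\prod_{i=1}^k u_i r_i^{\pm 1} u_i^{-1}$ for some $u_i \in \F_S$ and $r_i \in R$. I would enumerate $\WP(G,S)$ by dovetailing: run the enumeration of $R$, and in parallel search over all $k$, all choices of $r_1,\dots,r_k$ among the elements of $R$ enumerated so far, and all conjugating words $u_1,\dots,u_k \in \F_S$ up to increasing length, checking in the free group whether the resulting product freely reduces to $w$. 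This semi-decision procedure halts exactly on the elements of $\WP(G,S)$, so $\WP(G)$ is recursively enumerable.

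The only delicate point is the bookkeeping in the ($\Rightarrow$) direction: one must argue that membership in a normal closure $\llangle R\rrangle$ is recursively enumerable whenever $R$ is, which is the standard fact that the normal closure of an r.e.\ set in a free group is r.e. (the set of products of conjugates of elements of $R$ is the image of an r.e.\ set under a computable map). Equating "recursively presented" for the group with having an r.e.\ set of relators over a finite generating set also requires the routine remark that changing generating sets preserves recursive enumerability of the word problem, which the paper has already noted. Neither step is a real obstacle; the content is essentially the observation that $\WP(G)$ and $\llangle R\rrangle$ describe the same set.
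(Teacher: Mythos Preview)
Your argument is correct: the $(\Leftarrow)$ direction is immediate by taking $R=\WP(G,S)$, and for $(\Rightarrow)$ the key point is exactly that the normal closure of an r.e.\ set in a free group is r.e., which your dovetailing procedure establishes. The passage from a possibly infinite recursive generating set $S'$ to a finite one is handled by fixing, once and for all, a word in $S'$ for each of the finitely many generators in $S$ and pulling back along the resulting computable substitution map $S^*\to (S')^*$; this is the ``routine remark'' you allude to.

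There is nothing to compare against: the paper states Proposition~\ref{prop:WP} as a standard fact and gives no proof. Your write-up is a perfectly adequate justification of it.
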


An element $g\in G$ has \define{torsion} if there exists $n\geq 1$ such that $g^n = 1_G$. If there is no such $n$, we say $g$ is \define{torsion-free}. Analogously, we say $G$ is a \define{torsion group} if all of its elements have torsion. Otherwise, if all non-trivial elements of $G$ are torsion-free, we say the group is \define{torsion-free}.\\ 

Finally, let $\mathcal{P}$ be a class of groups. We say a group $G$ is \define{virtually} $\mathcal{P}$, if there exists a finite index subgroup $H\leq G$ that is in $\mathcal{P}$.

\subsection{Strongly aperiodic SFTs}
\label{sec:strongly}

Let us briefly look at the current state of the existence of strongly and weakly aperiodic SFTs, starting by the former. The first example of a strongly aperiodic SFT for $\Z^2$ was famously constructed by Berger~\cite{berger1966undecidability}. Since then, many other aperiodic SFTs for $\Z^2$ have been constructed~\cite{robinson1971undecidability,kari1996small,jeandel2021wang}. Berger's result also motivated the study of aperiodic subshifts on groups beyond $\Z^2$, which we review in this section.\\

There are several structural and algorithmic necessary conditions that a group must satisfy in order to allow a strongly aperiodic SFT. The first of these is due to Jeandel, and relates aperiodicity to the word problem of the group.

\begin{theorem}[Jeandel~\cite{jeandel2015aperiodic2}]
	\label{thm:jeandel}
	Let $G$ be a finitely generated group. If $G$ admits a strongly aperiodic SFT, then $\WP(G)\leq_{e}\co\WP(G)$. In particular, if $G$ is recursively presented it has decidable word problem.
\end{theorem}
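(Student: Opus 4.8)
The plan is to establish the reduction $\WP(G)\leq_e\co\WP(G)$ by using a strongly aperiodic SFT $X$ as a certificate-generating oracle: a word $w\in S^*$ represents a non-trivial element of $G$ precisely when we can find a configuration $x\in X$ together with a finite "witnessing" pattern that proves $\overline{w}\cdot x\neq x$, i.e. that $\overline w\notin\stab(x)$. Since $X$ is strongly aperiodic, \emph{every} non-trivial element moves \emph{every} configuration, so such a local witness always exists when $w\neq_G\epsilon$; conversely if $w=_G\epsilon$ then $\overline w$ fixes every configuration and no such witness can be found. The subtlety is that "$x\in X$" and "$\overline w\cdot x\neq x$ at position $h$" both need to be made effective using only oracle access to $\co\WP(G)$.

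**First I would** set up the bookkeeping for working with configurations without knowing the word problem. Fix a finite set $\Fo$ of nearest-neighbor forbidden patterns defining $X$ (using the conjugacy lemma from the excerpt). A configuration is a map $G\to A$, but $G$ is only presented combinatorially; so instead I would work with \emph{partial configurations defined on balls of words}, i.e. functions $y\colon B_S(n)\to A$ where $B_S(n)$ is the set of words of length $\le n$, subject to the consistency constraint that $y(u)=y(v)$ whenever $u=_G v$. The key observation is that checking this consistency constraint, and checking that $y$ avoids the patterns of $\Fo$ on overlaps, are both \emph{co-r.e. relative to nothing extra beyond $\co\WP(G)$}: "$u\neq_G v$" is exactly an instance of $\co\WP(G)$ (ask whether $uv^{-1}\notin\WP(G)$), and the forbidden-pattern conditions are positive Boolean combinations of equalities/inequalities of words. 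A compactness argument (König's lemma on the tree of partial configurations that are $\Fo$-consistent on all pairs of words that are \emph{provably} equal so far) shows $X\neq\emptyset$ forces the existence of arbitrarily large such partial configurations, so the search will not get stuck — though I should be careful that the tree I search is genuinely infinite.

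**Then** the semi-algorithm for $\co\WP(G)$, on input $w$, runs two dovetailed searches. The first is the standard enumeration of $\WP(G)$ — wait, that requires recursive presentation; in the general statement I only get $\WP(G)\leq_e\co\WP(G)$, so I must \emph{not} use an enumeration of $\WP(G)$. Instead: on input $w$, with oracle $\co\WP(G)$, enumerate all finite partial configurations $y\colon B_S(n)\to A$ (for growing $n$, large enough that $\supp$ contains $1_G$ and a neighborhood of the point where $w$ will act); use the oracle to discard those $y$ that violate $\Fo$-consistency or word-consistency; and for each surviving $y$, use the oracle again to test whether there is a position $h\in\supp(y)$ (a word) with $wh\in\supp(y)$ and $y(wh)\neq y(h)$ — this last inequality of $A$-values is a finite check, but we also need $wh$ and $h$ to be the \emph{right} group elements, which is automatic since we quantify over words. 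If such a witnessing pair $(y,h)$ is found, accept: this certifies $\overline w\notin\stab(x)$ for any completion $x$ of $y$ in $X$, hence $\overline w\neq 1_G$, hence $w\in\co\WP(G)$. Conversely, if $w\in\co\WP(G)$ then $\overline w\neq 1_G$, so by strong aperiodicity $\overline w\cdot x\neq x$ for every $x\in X$ (and $X\neq\emptyset$), meaning every such $x$ disagrees with $\overline w\cdot x$ at some group element $g$, which after choosing word representatives and a large enough ball appears as a witnessing pair in some finite $y$ — so the search halts. This gives a $\co\WP(G)$-enumeration procedure that halts exactly on $\co\WP(G)$, i.e. $\WP(G)\leq_e\co\WP(G)$, wait, I have it backwards: I want to enumerate $\WP(G)$, not $\co\WP(G)$; let me instead note that the procedure above \emph{halts exactly on non-trivial $w$}, i.e. it enumerates $\co\WP(G)$ relative to $\co\WP(G)$, which is trivial. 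Hmm.

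**The right formulation** — and this is the point I would be most careful about — is: the procedure above, with oracle $\co\WP(G)$, \emph{semi-decides membership in $\co\WP(G)$ using only finitely much oracle information per halting computation}; but what Jeandel's statement needs is $\WP(G)\leq_e\co\WP(G)$, equivalently "$\WP(G)$ is $\co\WP(G)$-r.e.", equivalently there is an oracle machine that, given $w\in\WP(G)$, halts (and may not halt otherwise), using the oracle $\co\WP(G)$. So I should turn the witness search around: on input $w$, I run the search for a \emph{witnessing pair proving $\overline w\neq 1$}; if $w\in\WP(G)$ this search runs forever (no witness exists, since $\overline w$ fixes everything), and if $w\notin\WP(G)$ it halts. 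That semi-decides $\co\WP(G)$, giving $\co\WP(G)\leq_e \co\WP(G)$ — still trivial. The genuine content must therefore be obtained by running this in tandem with the hypothesis that lets us \emph{also} semi-decide $\WP(G)$: namely, the oracle gives us $\co\WP(G)$ for free, so the theorem's conclusion $\WP(G)\leq_e\co\WP(G)$ is equivalent to saying $\WP(G)$ is decidable \emph{relative to} $\co\WP(G)$, i.e. both $\WP(G)$ and $\co\WP(G)$ are $\co\WP(G)$-r.e. The second is trivial; the \emph{first} is what the SFT buys us: with oracle $\co\WP(G)$ we can enumerate $\WP(G)$ by the argument above, since $w\in\WP(G)$ iff no witnessing pair is ever found iff (by strong aperiodicity and $X\neq\emptyset$) the dovetailed search exhausts without success — but an exhausting search doesn't halt. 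So the \emph{actual} mechanism, which I expect to be the crux, is Jeandel's trick: use a \emph{second} strongly aperiodic configuration or a counting/pigeonhole argument to get a halting certificate for triviality. Concretely, I would fix $X\neq\emptyset$ strongly aperiodic, and on input $w$ with oracle $\co\WP(G)$: search (using the oracle to validate partial configs) for a partial configuration $y$ on a ball large enough that $1_G,\overline w,\overline{w}^2,\dots$ can't all be distinct in it unless $\overline w$ has infinite order, deriving a contradiction from $w\notin\WP(G)$ at finite stage via the freeness of the action — the point being that in a strongly aperiodic SFT, $w\notin\WP(G)$ forces \emph{local} disagreement in a bounded radius depending only on $w$ and $\Fo$, and this bound, found by compactness, is what halts the search. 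I would then spell out that bounded radius via the standard compactness argument over the Cayley graph and verify the oracle only needs to answer finitely many $\co\WP$ queries, completing $\WP(G)\leq_e\co\WP(G)$. The main obstacle is exactly pinning down this compactness/uniformity step so that "disagreement happens within radius $r(w)$" is genuinely effective and not circularly dependent on deciding $\WP(G)$.
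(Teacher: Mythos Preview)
The paper does not itself prove this theorem --- it is cited from Jeandel --- so the natural comparison point is the paper's proof of the generalization, Theorem~\ref{thm:jean_gen}, which establishes the recursively presented ``in particular'' clause and is explicitly modeled on Jeandel's argument.

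Your proposal never closes, and you correctly diagnose why: every search you set up halts on the wrong side. Compactness certifies \emph{emptiness}, not non-emptiness. Strong aperiodicity says $X\cap\Fix(w)=\varnothing$ exactly when $\overline w\neq 1_G$; pulling back to $\F_S$, compactness then gives: $w\in\co\WP(G)$ iff there is a radius $n$ such that every $w$-periodic, $\Fo$-avoiding pattern on the ball $B_n(\F_S)$ is $G$-inconsistent, and inconsistency of a fixed pattern $p$ is witnessed by a single relation $uv^{-1}\in\WP(G)$ with $p(u)\neq p(v)$. So the finite certificate for $w\in\co\WP(G)$ consists of a radius together with finitely many elements of $\WP(G)$ --- the oracle is $\WP(G)$ and the output is $\co\WP(G)$, not the other way around. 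This is exactly the engine of the paper's Theorem~\ref{thm:jean_gen}: pull $X$ back to the effective $\F_n$-subshift $Y=\pi^*(X)$, intersect with the SFT $Z=\Fix(w)$, and semi-decide emptiness of $Y\cap Z$ via Lemma~\ref{lem:vacio_eff} (whose enumeration of forbidden patterns for $Y$ is precisely an enumeration of relators). Your observation that \emph{consistency} of a partial configuration is positively verifiable from $\co\WP(G)$ is correct but only lets you confirm that a valid $w$-periodic pattern exists on each fixed ball; ``on every ball'' is a $\Pi_1$ condition with no halting witness, which is the dead end you hit. The ``bounded radius $r(w)$'' you invoke at the end is again the emptiness bound, and verifying that no valid pattern survives at that radius requires exhibiting equalities in $G$ (elements of $\WP(G)$), not inequalities. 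In short, you have the oracle and the target swapped relative to what the compactness argument actually delivers.
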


The relation $A\leq_e B$ between two languages $A$ and $B$, represents the fact that $A$ \define{enumeration-reduces} to $B$. Intuitively, this means that it is possible to compute an enumeration for $A$ given an enumeration for $B$. A set satisfying $A\leq_{e}\co A$ is called co-total. Thus, Jeandel's theorem states that the existence of strongly aperiodic SFTs implies that the word problem of the group is co-total.\\

The next restriction is due to Cohen, who related the existence of strongly aperiodic SFTs to the large scale structure of the group, specifically the amount of ends.

\begin{theorem}[Cohen~\cite{cohen2017large}]
	\label{thm:CohenEnds}
	If $G$ is a finitely generated group with at least two ends, then it does not admit a strongly aperiodic SFT.
\end{theorem}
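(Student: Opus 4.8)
The plan is to argue by contradiction, separating according to the number of ends of $G$; the case of infinitely many ends will in fact subsume the two-ended case.

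\emph{Two ends.} Then $G$ is virtually $\Z$, so it contains a finite-index subgroup $H=\langle t\rangle\cong\Z$. Suppose $X\subseteq A^G$ is a nonempty strongly aperiodic $G$-SFT. Restricting the $G$-action on $A^G$ to $H$ and recoding along a transversal $G=\bigsqcup_{i\le k}Ht_i$ (an instance of the higher power construction, Section~\ref{sec:canonical}) produces a nonempty $\Z$-SFT $Y$ every point of which has trivial stabilizer, since $\stab_H(x)=\stab_G(x)\cap H=\{1_G\}$ for each $x\in X$. But a nonempty $\Z$-SFT always has a periodic point: presenting it as the set of bi-infinite walks in a finite graph, a long enough allowed word repeats a letter and hence exhibits the period of an allowed periodic configuration. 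This contradicts strong aperiodicity of $Y$.

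\emph{Infinitely many ends.} Write $X=\X_\Fo$ with $\Fo$ finite and let $R$ bound the diameters in $\Cay(G,S)$ of the supports of patterns in $\Fo$; write $B_r(g)$ for the ball of radius $r$ around $g$. Step~1 (geometry): by Stallings' structure theorem $G$ acts on a tree with finite edge stabilizers and no global fixed point, so it has an infinite-order element $g$ acting hyperbolically; transferring the half-tree on the side of its attracting end to $G$ through a base vertex yields a subset $\Omega\subsetneq G$ with finite edge-boundary $\partial\Omega$ in $\Cay(G,S)$ such that $g\Omega\subsetneq\Omega$, $\bigcap_{n\ge0}g^n\Omega=\varnothing$ and $\bigcup_{n\ge0}g^{-n}\Omega=G$. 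Replacing $g$ by a sufficiently large power and thickening to $W:=B_{2R}(\partial\Omega)\Subset G$, I may assume the translated walls $g^n(\partial\Omega)$ are pairwise more than $10R$ apart, so the bands $D_n:=g^n\Omega\setminus g^{n+1}\Omega$ partition $G$ and $g\cdot D_n=D_{n+1}$. Step~2 (pigeonhole): fix $x\in X$; since $x$ induces at most $|A|^{|W|}$ distinct patterns on the walls $g^nW$ up to translation, there are $i<j$ with $x(g^iw)=x(g^jw)$ for all $w\in W$. Put $h:=g^{j-i}$ (of infinite order) and $D:=g^i\Omega\setminus g^j\Omega$, so that $\{h^nD\}_{n\in\Z}$ partitions $G$. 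Step~3 (cut and repeat): define $y\in A^G$ by $y(p):=x(h^{-n}p)$ for the unique $n$ with $p\in h^nD$. Then $h\cdot y=y$, so $h\in\stab(y)\setminus\{1_G\}$. Moreover $y\in X$: by the spacing any ball $B_R(p)$ meets at most one wall, and if it meets the wall between $h^nD$ and $h^{n+1}D$ then $B_R(p)\subseteq h^ng^jW$, on which the two candidate rules $q\mapsto x(h^{-n}q)$ and $q\mapsto x(h^{-n-1}q)$ coincide precisely by the identity from Step~2; hence $y|_{B_R(p)}$ is a translate of the pattern $x|_{B_R(h^{-n}p)}$ occurring in $x$, and therefore contains no forbidden pattern. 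Thus $y\in X$ has a nontrivial period, contradicting strong aperiodicity. (Applied to a two-ended $G$ this same argument recovers the first case.)

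\emph{Main obstacle.} The delicate point is Step~1: turning the hypothesis ``infinitely many ends'' into an explicit half-space $\Omega$ together with a compatible infinite-order element $g$ satisfying $g\Omega\subsetneq\Omega$, with the nested translates shrinking to $\varnothing$, expanding to $G$, and having uniformly separated walls. Stallings' theorem supplies the tree and the hyperbolic element, but the Bass--Serre tree need not be locally finite, so transferring a half-tree to $\Cay(G,S)$ and simultaneously controlling $|g^n|$ and the wall separation requires some care; an alternative is to work directly inside the Cayley graph with a minimal finite cut and translate it along an infinite-order element. Once Step~1 is secured, Steps~2 and~3 are a routine pigeonhole together with a local consistency check.
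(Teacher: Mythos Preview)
The paper does not contain a proof of this statement: Theorem~\ref{thm:CohenEnds} is quoted in the background Section~\ref{sec:strongly} as a result of Cohen~\cite{cohen2017large}, with no argument given. So there is nothing in this paper to compare your proposal against.

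As for the proposal itself: your two-ended case is fine and standard. Your infinitely-many-ends argument follows the same cut-and-pigeonhole philosophy as Cohen's original proof, and Steps~2 and~3 are correctly set up once Step~1 is granted. You have already correctly identified the only genuine difficulty, namely producing the half-space $\Omega$ with finite coboundary and the compatible infinite-order element $g$ with $g\Omega\subsetneq\Omega$, $\bigcap g^n\Omega=\varnothing$, $\bigcup g^{-n}\Omega=G$, and well-separated walls. This can indeed be extracted from Stallings' theorem (a hyperbolic element on the Bass--Serre tree, with the half-tree on the attracting side pulled back through the orbit map of a base vertex; finiteness of edge stabilizers gives finite coboundary), but as you note the tree need not be locally finite and one must check that the translated walls can be separated after passing to a power. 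Cohen's paper handles this by working directly with coarse geometry and a minimal finite cut rather than invoking Bass--Serre theory, which sidesteps the local-finiteness issue; you may find it cleaner to follow that route. In any case, your outline is sound and would constitute a complete proof once Step~1 is written out carefully.
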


By combining Jeandel and Cohen's results, we arrive at the following conjecture.
\begin{conjecture}
	\label{conj:SA}
	Let $G$ be a finitely generated group. $G$ admits a strongly aperiodic SFT if and only if $G$ is one ended and has decidable word problem.
\end{conjecture}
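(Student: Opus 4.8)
This statement is a conjecture; the forward implication is just a repackaging of two known necessary conditions. If $G$ is finitely generated and admits a strongly aperiodic SFT, then $G$ is one-ended, since an infinite finitely generated group has one, two, or infinitely many ends and Theorem~\ref{thm:CohenEnds} forbids two or more; and by Theorem~\ref{thm:jeandel} the word problem $\WP(G)$ is co-total, hence decidable as soon as $G$ is recursively presented (equivalently, by Proposition~\ref{prop:WP}, as soon as $\WP(G)$ is recursively enumerable). So the statement holds verbatim for recursively presented groups, while for arbitrary finitely generated groups ``decidable word problem'' should be read as ``co-total word problem''. All the difficulty is in the converse.

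For the converse --- manufacturing a strongly aperiodic SFT out of one-endedness together with a decision procedure for $\WP(G)$ --- the approach I would pursue is a group-theoretic version of the self-simulating (fixed-point) tileset method. One forces every configuration of a nearest neighbor SFT to carry a self-similar hierarchical structure of macro-tiles, in which a macro-tile of level $n+1$ is a coarsening of a finite block of macro-tiles of level $n$ and each macro-tile runs a fixed universal machine on a program that prints the SFT's own rule set, so that the structure is self-correcting. The hypotheses enter in two places. The decidability of $\WP(G)$ is used inside the tiles to check, step by step, that the combinatorial ``net'' of macro-tiles being encoded is consistent with genuine multiplication in $G$, which is what excludes spurious, ungrounded hierarchies. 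The geometric ingredient one needs is a family of nested finite regions of $G$ playing the role that scaled sublattices play for $\Z^d$ and that metric balls or horoball layers play for hyperbolic groups. (One can also try to bootstrap from groups where the conjecture is already known by transporting SFTs along finite-index inclusions or suitable quotients via the canonical constructions introduced later in the article, though such reductions do not by themselves reach new groups.)

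Aperiodicity would then follow from the rigidity of the forced hierarchy: a nontrivial period $g$ of a configuration must preserve every level simultaneously, and a congruence-type argument --- as in Berger's and Robinson's tilings, where the hierarchy of nested squares pins down all coordinates modulo arbitrarily large scales --- should force $g = 1_G$. One-endedness is what ought to make such a rigid, space-filling hierarchy available in the first place: in a two-ended group the hierarchy inevitably degenerates to one that can be slid along the unique axis, which is precisely why virtually $\Z$ groups admit no strongly aperiodic SFT. \textbf{The main obstacle} is that there is at present no uniform construction of such a hierarchical net over all one-ended finitely generated groups with decidable word problem; both the construction and the accompanying rigidity argument have been carried out only in special cases --- lattices in $\R^d$ (starting from Berger's tiling of $\Z^2$), a number of hyperbolic groups, and certain further groups such as Baumslag--Solitar groups. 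Isolating the exact feature of one-ended groups that makes a uniform construction go through, and combining it with the rigidity argument into a single uniform proof, is what is still missing, and is why the statement remains a conjecture.
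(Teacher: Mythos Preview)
Your assessment is correct and matches the paper's own treatment: this is stated as an open conjecture, not a theorem, and the paper offers no proof. The paper motivates it exactly as you do---by combining Theorem~\ref{thm:jeandel} and Theorem~\ref{thm:CohenEnds}---and, like you, explicitly flags that Jeandel's result only yields co-totality of $\WP(G)$, so that even the forward implication is not known in full generality (the paper remarks that some researchers expect counterexamples among groups with undecidable but co-total word problem). Your sketch of a fixed-point/self-simulation approach to the converse goes well beyond what the paper attempts; the paper simply lists the classes of groups for which the conjecture has been verified and moves on.
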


Notice however that Jeandel's Theorem does not rule out the existence of groups with undecidable word problem that satisfy $\WP(G)\leq_e\textnormal{co}\WP(G)$ and admit a strongly aperiodic SFT. In fact, some researchers believe such groups exist\footnote{Sebasti{\'a}n Barbieri, personal communication.}.

So far, the conjecture has been shown to hold for the following classes of finitely generated groups:

\begin{itemize}
	\item Virtually polycyclic groups~\cite{jeandel2015aperiodic},
	\item Baumslag-Solitar groups~\cite{aubrun2023gbs}, with the solvable case originally obtained in~\cite{esnay2022weakly} with an alternative proof in~\cite{Aubrun_Schraudner_2020},
	\item The Ivanov monster group, for which every element is cyclic and contains a finite number of conjugacy classes, and the Osin monster groups, which contains two conjugacy classes~\cite{jeandel2015aperiodic2},
	\item Surface groups \cite{cohen2017strongly}, and more generally hyperbolic groups~\cite{cohen2022strongly},
	\item Groups of the form $\Z^2\ltimes_{\phi} H$ where $H$ has decidable word problem~\cite{barbieri2019simulation}. An indepent proof also exists for the particular case of the Heisenberg group~\cite{Sahin_Schraudner_Ugarcovici_2021},
	\item Groups of the form $G\times H\times K$ where each group has decidable word problem~\cite{barbieri2019geometric}. This includes finitely generated branch groups with decidable word problem such as the Grigorchuk group,
	\item Self-simulable groups with decidable word problem~\cite{barbieri2021groups}. Self-simulable groups include the direct product of any two non-amenable groups as well as Thompson's group $V$, Burger-Mozes simple finitely presented group, braid groups on more than $7$ stands, some RAAGs, among others,
	\item Groups of the form $H\times N$ where both groups have decidable word problem and $N$ is non-amenable~\cite{barbieri2023soficity}. This includes some groups who where previously known to admit strongly aperiodic SFTs such as $\Z\times V$, $\Z\times T$ and $\Z\times PSL_2(\Z)$ where $V$ and $T$ are Thompson's groups~\cite{jeandel2015aperiodic2},
	\item the Lamplighter group~\cite{bartholdi2024shifts}.
\end{itemize}

A particularly important property of strong aperiodicity is that it is a geometric property for finitely presented groups.
\begin{theorem}[Cohen \cite{cohen2017large}]\index{quasi-isometry}
\label{thm:CohenQI}
	Let $G$ and $H$ be two quasi-isometric finitely presented groups. Then, $G$ admits a strongly aperiodic SFT if and only if $H$ does.
\end{theorem}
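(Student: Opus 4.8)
The statement is symmetric in $G$ and $H$, so the plan is to prove just one implication: if $G$ admits a strongly aperiodic SFT $X\subseteq A^G$, then so does $H$. I would fix finite symmetric generating sets $S_G,S_H$, a $(\lambda,\varepsilon)$-quasi-isometry $\phi\colon H\to G$ together with a chosen quasi-inverse, and a finite presentation $H=\langle S_H\mid R_H\rangle$; after replacing $X$ by a conjugate I may assume $X=\X^G_\Fo$ with every pattern of $\Fo$ supported on a fixed ball $B_G(1_G,r)$. The overall strategy is in two stages: first build an SFT on $H$ whose configurations encode a quasi-isometry $H\to G$, and then \emph{decorate} it with a pulled-back copy of a configuration of $X$.

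For the first stage I would build an SFT $Q\subseteq B^H$ whose configurations record a map $c\colon H\to G$ through its generator increments $h\mapsto\bigl(c(h)^{-1}c(hs)\bigr)_{s\in S_H}$; coarse Lipschitzness of $\phi$ makes each increment range over the finite set $B_G(1_G,\lambda+\varepsilon)$, so $B$ is finite. The forbidden patterns of $Q$ would impose: (i) a cocycle condition along the relators of $H$ (including the trivial relators $ss^{-1}$), so that the increments integrate to a genuine map $c$, well defined up to a global left translation of $G$ — here finite presentability of $H$ is essential, since only finitely many relators, each of bounded length, need be checked; (ii) injectivity of $c$ on balls of a fixed radius, which together with (iii) forces $c$ to be globally injective; (iii) rules certifying that $c$ is a quasi-isometric embedding with coarsely dense image, made local with the help of the finite presentations and the fixed quasi-inverse. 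I would check $Q\neq\emptyset$ by feeding in a bounded perturbation of $\phi$ that is genuinely injective; such a perturbation exists because $G$ is infinite and the fibres of a quasi-isometry are uniformly finite.

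For the second stage I would set $A'=A^{B_G(1_G,\,r+\varepsilon')}$ and take $Y\subseteq(B\times A')^H$ to be the SFT whose $B$-coordinate lies in $Q$ and whose $A'$-coordinate $h\mapsto a(h)$ is required to be consistent on overlaps with the patches at neighbouring vertices (the overlaps being non-empty since $c$ is coarsely Lipschitz) and free of patterns from $\Fo$. The intended meaning is $a(h)=x|_{B_G(c(h),\,r+\varepsilon')}$ for some $x\in X$: because $c(H)$ is $\varepsilon'$-dense, the patches cover $G$ and glue to a configuration $x\in A^G$ which the rules force into $X$ and which is determined by $(c,a)$. Non-emptiness of $Y$ is then immediate from $\phi$ and any fixed $x\in X$. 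To obtain strong aperiodicity, suppose $h_0\in\stab\bigl((c,a)\bigr)$: $h_0$-periodicity of the increment pattern gives $c(h_0h)^{-1}c(h_0hs)=c(h)^{-1}c(hs)$ for all $h\in H$, $s\in S_H$, so $h\mapsto c(h_0h)c(h)^{-1}$ is constant, equal to some $g_0\in G$ with $c(h_0h)=g_0c(h)$; then $h_0$-periodicity of the $A'$-pattern gives $x(g_0g)=x(g)$ for all $g\in G$ (writing $g=c(h)g'$ with $g'\in B_G(1_G,r+\varepsilon')$ and using that such products exhaust $G$), i.e. $g_0\in\stab(x)=\{1_G\}$; hence $c(h_0h)=c(h)$ for all $h$, and injectivity of $c$ forces $h_0=1_H$. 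Thus $Y$ is a non-empty strongly aperiodic SFT on $H$.

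The step I expect to be the real obstacle is item (iii) above: turning ``$c$ is a quasi-isometric embedding with $\varepsilon'$-dense image'' — a genuinely macroscopic condition — into finitely many forbidden patterns of bounded support. This is precisely where finite presentability of \emph{both} groups is needed: the relators of $H$ localize the simple-connectivity (cocycle) part, while a bounded presentation of $G$ together with a fixed quasi-inverse is what lets one certify, locally, both the distance lower bound and the coarse density of $c(H)$. Once Stage~1 is set up correctly, the decoration in Stage~2 and the periodicity computation are routine bookkeeping with patch sizes and radii.
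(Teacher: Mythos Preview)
The paper does not prove this theorem; it is quoted from Cohen in the background survey with no argument supplied, so there is no in-paper proof to compare against. Your two-stage architecture --- encode a variable quasi-isometry $c\colon H\to G$ as an $H$-SFT via its $S_H$-increments, then decorate with pulled-back patches of a configuration of $X$ --- is the right skeleton, and your endgame for strong aperiodicity (from $h_0\in\stab\bigl((c,a)\bigr)$ derive $c(h_0h)=g_0c(h)$, kill $g_0$ via $\stab(x)=\{1_G\}$, then kill $h_0$ via injectivity of $c$) is correct once Stage~1 is in place.

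The gap is exactly where you locate it, but your paragraph on~(iii) is a description of the difficulty, not a solution. The Lipschitz upper bound and the cocycle condition over the relators of $H$ are genuinely local; the distance \emph{lower} bound and the $\varepsilon'$-density of $c(H)$ in $G$ are not --- they are statements about the global image in $G$, and ``use a fixed quasi-inverse'' is a pointer, not a mechanism. What must actually be written down is an enriched alphabet that carries, at each $h$, not only the increment of $c$ but a local \emph{inverse} datum on $B_G(c(h),K)$, together with overlap rules forcing these inverse-data to agree between neighbouring sites; finite presentability of $G$ then enters as simple connectivity of its Cayley $2$-complex, which is what allows locally consistent inverse-data to glue to a global coarse inverse $G\to H$ and hence certifies that $c$ is a quasi-isometry with dense image. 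Spelling out this alphabet, these rules, and the local-to-global gluing lemma is the substance of Cohen's argument, and it is absent from your sketch. A secondary loose end: perturbing $\phi$ to a genuinely injective map at bounded distance is a Hall-type matching statement in bounded geometry; it is true but deserves a line, and the perturbed map must still be checked against whatever rules end up implementing~(iii).
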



A similar invariance result for finitely generated groups has been obtained for commensurable groups.

\begin{theorem}[Carroll, Penland \cite{carroll2015periodic}]
	Let $G$ and $H$ be two finitely generated groups which are commensurable. Then, $G$ admits a strongly aperiodic SFT if and only if $H$ does.
\end{theorem}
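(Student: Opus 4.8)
The plan is to reduce to a finite-index subgroup and then transfer strongly aperiodic SFTs across it in both directions using the canonical constructions. Since $G$ and $H$ are commensurable there are finite-index subgroups $G_0\le G$ and $H_0\le H$ with $G_0\cong H_0$, and admitting a strongly aperiodic SFT is an isomorphism invariant; so it suffices to prove the following: for a finitely generated group $G$ and a finite-index subgroup $G_0\le G$, $G$ admits a strongly aperiodic SFT if and only if $G_0$ does.

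For the forward implication I would use the higher power construction applied to $G_0\le G$. Fixing a transversal $T$ for the $G_0$-cosets, the reparametrization $x\mapsto\hat x$, $\hat x(g_0)(t)=x(g_0 t)$, is a $G_0$-equivariant homeomorphism $A^G\to (A^T)^{G_0}$, so it carries a $G$-SFT $X$ to a $G_0$-SFT $X^{[T]}$; being injective and equivariant, it also carries aperiodicity across, since $g_0\cdot\hat x=\hat x$ forces $g_0\cdot x=x$ and hence $g_0=1_G$. Thus $X^{[T]}$ is a strongly aperiodic SFT on $G_0$.

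For the converse --- the substantive direction --- let $X$ be a strongly aperiodic SFT on $G_0$ and let $N=\bigcap_{g\in G}gG_0g^{-1}$ be the normal core of $G_0$ in $G$, a finite-index normal subgroup of $G$ contained in $G_0$. I would combine two layers. The first is the free extension $Y=\Free_{G_0}^G(X)$, a non-empty $G$-SFT; by the description of the stabilizer of a free extension (Theorem~\ref{thm:freeX}) --- or directly: a period of $y$ lying in $G_0$ restricts to a period of the element of $X$ read off along the coset $G_0$, hence is trivial --- every $y\in Y$ satisfies $\stab(y)\cap G_0=\{1_G\}$. The second is the coset-coloring subshift $W\subseteq (G/N)^G$, the orbit closure of the configuration $g\mapsto gN$; viewing $G/N$ as a finite group, $W$ is the nearest-neighbor SFT with local rules $w(gs)=w(g)\cdot(sN)$ for $s$ in the generating set, it consists of exactly $[G:N]$ configurations, and every configuration of $W$ has stabilizer exactly $N$ (so $W$ in fact witnesses that $N$ is realizable in $G$). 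Now set $Z=Y\times W$ with the diagonal action: it is a non-empty $G$-SFT, and for every $(y,w)\in Z$,
\[
\stab(y,w)=\stab(y)\cap\stab(w)=\stab(y)\cap N\subseteq\stab(y)\cap G_0=\{1_G\},
\]
so $Z$ is a strongly aperiodic SFT on $G$, as required.

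The main obstacle is precisely this converse step. The free extension on its own does not suffice: when $G$ contains torsion not already present in $G_0$, a configuration of $\Free_{G_0}^G(X)$ may have a nontrivial --- though necessarily finite --- stabilizer, coming from elements that permute the $G_0$-cosets nontrivially. The purpose of the second layer $W$ is to rigidify the coset structure: since every configuration of $W$ has the same stabilizer $N\le G_0$, passing to the product kills exactly these residual symmetries. The remaining points are routine: the free extension and the higher power preserve non-emptiness and finite type (among the basic properties of the canonical constructions recalled above), and $W$ is an SFT with the claimed stabilizers, which is the short computation indicated.
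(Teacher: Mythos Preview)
The paper does not give its own proof of this theorem; it is merely cited as a result of Carroll and Penland. There is therefore nothing in the paper to compare against directly. That said, your argument is correct and is essentially the Carroll--Penland proof, phrased in the language of the canonical constructions the paper sets up: the higher power (Definition~\ref{def:higher_power}) handles the passage from $G$ to $G_0$, and for the converse you correctly observe that the free extension $X^{\uparrow}$ alone can acquire finite stabilizers from torsion outside $G_0$, and you kill these by taking a product with the coset-coloring SFT realizing the normal core $N$ --- exactly the SFT the paper builds in the lemma immediately following Lemma~\ref{lem:no_normal}. The chain $\stab(y,w)\subseteq\stab(y)\cap N\subseteq\stab(y)\cap G_0=\{1_G\}$, with the last equality coming from Lemma~\ref{lem:free_extension}(3), is the right computation.

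Two cosmetic remarks. First, the paper reserves the symbol $\Free(\cdot)$ for the set of free elements of an action (see the paragraph before Theorem~\ref{thm:freeX}), and writes the free extension as $X^{\uparrow}$; your notation $\Free_{G_0}^G(X)$ clashes with this. Second, your transversal $T$ should be a set of right coset representatives to match the paper's convention for the higher power.
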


To finish this section, let us comment on what happens when one alleviates the restrictions of finite type or finite generation. Gao, Jackson and Seward showed that every countable group admits a strongly aperiodic subshift~\cite{gao2009coloring}. This was later improved upon by Aubrun, Barbieri and Thomassé who in addition to finding an alternative proof for the result using the Lov\'asz Local Lemma, showed that when the group is recursively presented the constructed strongly aperiodic subshift is effectively closed~\cite{aubrun2019realization}. 

On the side of non-finitely generated groups, Barbieri characterized groups that admit strongly aperiodic SFT in terms of their finitely generated subgroups.

\begin{theorem}[Barbieri~\cite{barbieri2023aperiodic_non_fg}]
	A group $G$ admits a strongly aperiodic SFT if and only if there exists a finitely generated subgroup $H\leq G$ and a non-empty $H$-SFT $X$ such that for every $g\in G\setminus\{1_G\}$ there exists $t\in G$ and $n\in\N$ such that $tg^nt^{-1}\in H\setminus\bigcup_{x\in X}\stab(x)$.
\end{theorem}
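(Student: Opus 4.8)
The plan is to prove both implications of the characterization, exploiting the free extension construction (Theorem~\ref{thm:freeX}) as the main bridge between a finitely generated subgroup and the overgroup $G$. For the forward direction, suppose $G$ admits a strongly aperiodic SFT $Y \subseteq A^G$, defined by a finite set of forbidden patterns $\Fo$. Since $\Fo$ is finite, the union of the supports of patterns in $\Fo$ lies in some finitely generated subgroup $H \leq G$; I would take $H$ to be generated by this finite set together with any fixed finite generating set behaviour needed. One then restricts the ``local rules'' to $H$ to obtain an $H$-SFT $X$: concretely, $X = \X_\Fo^H$. The key point is to verify that $X$ is non-empty (restrict any $y \in Y$ to $H$) and that $X$ witnesses the required property: given $g \in G \setminus \{1_G\}$, strong aperiodicity of $Y$ says no non-trivial element of $G$ fixes any configuration of $Y$; one needs to translate this into the statement that some conjugate power $tg^nt^{-1}$ lands in $H$ and acts non-trivially on $X$. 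Here I would argue by contradiction: if every $tg^nt^{-1}$ that happens to lie in $H$ also stabilized every configuration of $X$, one could inflate a configuration of $X$ back up to a $g$-periodic configuration of $Y$, contradicting strong aperiodicity. Making this inflation precise — going from an $H$-configuration fixed by certain elements to a $G$-configuration in $Y$ fixed by $g$ — is the delicate part and is presumably where the free extension's stabilizer description is used.

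For the converse, suppose we are given $H \leq G$ finitely generated and a non-empty $H$-SFT $X$ with the stated covering property. The natural move is to form the free extension $\widetilde{X}$ of $X$ from $H$ to $G$; by the cited properties this is a non-empty $G$-SFT, and by Theorem~\ref{thm:freeX} its stabilizers are controlled in terms of conjugates of elements of $\bigcup_{x \in X}\stab(x)$ that meet $H$. The goal is to show $\widetilde{X}$ — or possibly a further modification of it — is strongly aperiodic. Take $g \in G \setminus \{1_G\}$ and a configuration $\widetilde{x} \in \widetilde{X}$; if $g \cdot \widetilde{x} = \widetilde{x}$, then using the stabilizer formula, all relevant conjugate powers of $g$ lying in $H$ would have to fix the corresponding $H$-configuration, i.e.\ lie in $\bigcup_{x\in X}\stab(x)$ — but the hypothesis provides $t$ and $n$ with $tg^nt^{-1} \in H \setminus \bigcup_{x \in X}\stab(x)$, a contradiction. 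So no non-trivial $g$ can stabilize any point of $\widetilde{X}$.

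The main obstacle I anticipate is bookkeeping with the stabilizer of the free extension: Theorem~\ref{thm:freeX} describes $\stab(\widetilde{x})$ precisely, and the argument hinges on the fact that an element $g$ stabilizes a configuration in the free extension only if certain conjugates of its powers, upon entering $H$, stabilize the underlying $H$-configuration. Matching this exactly against the quantifier structure ``there exist $t \in G$ and $n \in \N$ with $tg^nt^{-1} \in H \setminus \bigcup_x \stab(x)$'' requires care about which conjugating elements $t$ actually arise from the coset structure $G/H$ used to build $\widetilde{X}$, and about the role of the power $n$ (which reflects that $g$ itself need not land in any conjugate of $H$, but some power of it might). A secondary subtlety on the forward direction is ensuring the chosen finitely generated $H$ is large enough that the restricted SFT $X$ is genuinely an $H$-SFT with the same local rules, and non-empty; this is routine once $H$ contains all pattern supports, but should be stated carefully. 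Modulo these points, both directions are essentially a translation between ``strongly aperiodic on $G$'' and ``the free extension of a well-chosen $H$-SFT has trivial stabilizers,'' with Theorem~\ref{thm:freeX} doing the heavy lifting.
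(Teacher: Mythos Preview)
The paper does not actually prove this theorem; it is quoted as a result of Barbieri, with only the remark that ``the strongly aperiodic shift comes from taking the free extension of $X$ onto $G$.'' Your proposal is on the right track and matches this hint: in both directions the relevant $G$-SFT is precisely the free extension $\upa{X}$ (in the forward direction, if $Y=\X_\Fo^G$ and $H$ is generated by the supports of patterns in $\Fo$, then $Y=\upa{(\X_\Fo^H)}$ by Lemma~\ref{lem:free_extension}), and the condition ``there exist $t,n$ with $tg^nt^{-1}\in H\setminus\bigcup_{x\in X}\stab(x)$'' is exactly a restatement of $\Cl(g)\cap R_G(\Free(X))\neq\varnothing$ from Theorem~\ref{thm:freeX}, since $tg^nt^{-1}=(tgt^{-1})^n$. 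Once you see this, both implications are one-line applications of Theorem~\ref{thm:freeX}, and the ``delicate inflation'' and ``further modification'' you worry about are unnecessary.

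One genuine slip to fix: in your forward-direction contradiction you wrote that every $tg^nt^{-1}$ lying in $H$ would have to stabilize \emph{every} configuration of $X$. The correct negation of the hypothesis is that each such element stabilizes \emph{some} configuration of $X$ (equivalently, fails to lie in $\Free(X)$); this is what $\Cl(g)\cap R_G(\Free(X))=\varnothing$ says, and Theorem~\ref{thm:freeX} then yields $g\notin\Free(\upa{X})$, contradicting strong aperiodicity of $Y=\upa{X}$ directly --- no need to manufacture a $g$-periodic configuration by hand.
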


The strongly aperiodic shift comes from taking the free extension of $X$ onto $G$ (see Definition~\ref{def:free_extension}). This allowed Barbieri to show that groups such as $\mathbb{Q}^2$ admit strongly aperiodic SFT, and to find an alternative proof for the existence of such SFTs on the Osin monster group.

\subsection{Weakly aperiodic SFTs}
\label{sec:weakly}

Let us now review weakly aperiodic SFTs. Our first observation is that for infinite groups, any strongly aperiodic SFT is weakly aperiodic, as the orbit of any configuration is in bijection to the quotient of the group by the corresponding stabilizer. This already gives us a number of examples of groups that admit weakly aperiodic SFTs. Nevertheless, there are groups that admit weakly aperiodic SFTs, but not strongly aperiodic ones, such as free groups (by Theorem~\ref{thm:CohenEnds}). Carroll and Penland showed that the only virtually nilpotent groups that do not admit weakly aperiodic SFTs are virtually $\Z$ groups~\cite{carroll2015periodic}. This motivated them to propose the following conjecture.

\begin{conjecture}
	\label{conj:CarrollPenland}
	Let $G$ be a finitely generated group. Then, $G$ admits a weakly aperiodic SFT if and only if it is not virtually $\Z$.
\end{conjecture}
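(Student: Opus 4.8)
The plan is to separate the two implications, since only the first is within reach of current methods and the second is the real content of the conjecture. \textbf{The ``only if'' direction (virtually $\Z$ implies no weakly aperiodic SFT).} I would prove this by reduction to $\Z$. Given $G$ virtually $\Z$, pick a finite-index subgroup $H\leq G$ with $H\cong\Z$, and replace $H$ by the intersection of its finitely many conjugates so that $H\trianglelefteq G$ with $G/H$ finite. Let $X$ be any nonempty $G$-SFT. The higher-power construction with respect to $H$ produces a nonempty $H$-SFT $Y$ whose configurations encode those of $X$ together with their coset data, arranged so that a configuration of $Y$ with finite $H$-orbit corresponds to a configuration of $X$ fixed by a finite-index subgroup of $H$. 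Since $H\cong\Z$, the classical transition-graph argument applies: $Y$ is conjugate to a nearest-neighbor $\Z$-SFT, any bi-infinite path in its finite transition graph must revisit a vertex, and repeating the resulting closed walk periodically yields a configuration of $Y$ with finite $\Z$-orbit. Transporting this period back gives $x\in X$ with $\stab(x)\cap H$ of finite index in $H$, hence $\stab(x)$ of finite index in $G$ because $[G:H]<\infty$. Thus every nonempty $G$-SFT has a periodic point. The only delicate point is arranging the higher-power correspondence so that finite-orbit configurations match on both sides, which is precisely what the construction of Section~\ref{sec:canonical} is designed to do.

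\textbf{The ``if'' direction (not virtually $\Z$ implies a weakly aperiodic SFT).} Here I would attempt a case analysis driven by the coarse and algebraic structure of $G$, aiming to reduce every case to a short list of ``seed'' groups via the canonical constructions of Section~\ref{sec:canonical}, each of which preserves the SFT property and weak aperiodicity under the stated hypotheses. If $G$ has infinitely many ends, Stallings' theorem exhibits it as a nontrivial amalgam or HNN extension over a finite subgroup, and one builds a weakly aperiodic SFT directly from the tree-like geometry, with free groups $\F_n$ as the model case. If $G$ has an infinite finitely generated quotient with finitely generated kernel that already carries a weakly aperiodic SFT, a pull-back transports it to $G$, since the stabilizer of a pull-back configuration is the full preimage of the original stabilizer and hence has the same index; in particular every group mapping onto $\Z^2$ with finitely generated kernel inherits a Robinson-type weakly aperiodic $\Z^2$-SFT. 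If $G$ has a proper finite-index subgroup carrying a weakly aperiodic SFT, a standard transfer to the finite overgroup does the job. The remaining one-ended cases---virtually nilpotent groups, polycyclic groups, and other amenable groups of superlinear growth---would be approached by direct constructions of odometer or counting type, following Carroll--Penland; one then argues that iterating all these reductions reaches a seed in every case.

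\textbf{The main obstacle.} The genuine difficulty, and the reason the statement remains a conjecture, is the class of one-ended finitely generated groups that are not virtually $\Z$ yet have no proper finite-index subgroup and no nontrivial finitely generated quotient: infinite finitely generated simple groups, Tarski monsters, and similarly rigid examples. For these, none of the canonical constructions supplies a seed, so a weakly aperiodic SFT---if one exists---must be built from the intrinsic geometry of $G$ with no external input. Compounding this, weak aperiodicity is not known to be a quasi-isometry invariant, in contrast with strong aperiodicity (Theorem~\ref{thm:CohenQI}), so one may not simply pass to a better-behaved quasi-isometric model. A complete resolution would therefore seem to require a construction of weakly aperiodic SFTs valid for \emph{all} one-ended finitely generated groups of superlinear growth, or else the discovery that the conjecture fails for some exotic group.
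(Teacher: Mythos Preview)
The statement you were asked to prove is presented in the paper as an open \emph{conjecture} (Conjecture~\ref{conj:CarrollPenland}, due to Carroll and Penland), not as a theorem; the paper gives no proof and instead lists the classes of groups for which it has been verified. There is therefore no paper proof to compare your proposal against.

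Your proposal handles this appropriately. The ``only if'' direction you sketch---passing to a finite-index normal copy of $\Z$ via the normal core, applying the higher-power construction of Section~\ref{subsec:higher}, and invoking the pigeonhole/closed-walk argument on the transition graph of a $\Z$-SFT---is the standard argument and is correct. For the ``if'' direction you correctly identify both the available reduction machinery (free extensions, pull-backs, commensurability transfer) and the genuine obstruction: one-ended groups with no usable finite-index subgroups or finitely generated quotients, where none of the canonical constructions provides leverage. This matches the state of the art reflected in the paper's survey (Section~\ref{sec:weakly} and Proposition~\ref{prop:WA}). Your observation that weak aperiodicity is not known to be a quasi-isometry invariant, unlike strong aperiodicity (Theorem~\ref{thm:CohenQI}), is also to the point.

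In short: you have not proved the conjecture, but neither has anyone else, and your write-up accurately delineates what is known from what is not.
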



At the time of writing, the following classes of finitely generated groups have been shown to satisfy this conjecture:
\begin{itemize}
	\item Virtually nilpotent groups~\cite{ballier2018domino,carroll2015periodic}, and more generally virtually polycyclic groups~\cite{jeandel2015aperiodic},
	\item Baumslag-Solitar groups~\cite{aubrun2013baumslag}, and more generally generalized Baumslag-Solitar groups~\cite{aubrun2023gbs}
	\item Hyperbolic groups~\cite{coornaert2006symbolic,gromov1987hyperbolic},
	\item Non amenable groups \cite{jeandel2015translation,block1992aperiodic},
	\item Non residually finite groups \cite{jeandel2015translation},
    \item Artin groups~\cite{aubrun2023gbs},
	\item Infinite finitely generated $p$-groups~\cite{jeandel2015translation,marcinkowski2014aperiodic},
	\item Groups of the form $G_1\times G_2$ where both groups are infinite~\cite{jeandel2015translation}. This shows the Grigorchuk group admits a weakly aperiodic SFT, which was also obtained in~\cite{marcinkowski2014aperiodic},
	\item The Lamplighter group~\cite{cohen2020lamplighters}.
\end{itemize}

There are also many properties satisfied by groups which do admit weakly aperiodic SFTs. We summarize these properties in the following proposition.
\begin{proposition}
	\label{prop:WA}
	Let $G$ be a finitely generated group. The following hold,
	\begin{itemize}
		\item If $G$ is commensurable to $H$, and $H$ admits a weakly aperiodic SFT, then so does $G$~\cite{carroll2015periodic},\index{commensurability}
		\item If a subgroup $H\leq G$ admits a weakly aperiodic SFT, then so does $G$ (see Lemma~\ref{lem:free_extension}),
		\item For a finitely generated normal subgroup $N\trianglelefteq G$, if $G/N$ admits a weakly aperiodic SFT, then so does $G$ (see Proposition~\ref{prop:quotient}),
		\item If a finitely presented group $H$ acts translation-like on $G$, and $H$ admits a weakly aperiodic SFT, then so does $G$~\cite{jeandel2015translation}.\index{translation-like action}
	\end{itemize}
	Furthermore, if $G$ does not admit a weakly aperiodic SFT,
	\begin{itemize}
		\item For every $n\in\N$ there must exist a finite index subgroup $H\leq G$ such that $n$ divides $[G:H]$~\cite[Corollary 3.3]{jeandel2015translation} (see also~\cite{marcinkowski2014aperiodic}),
		\item If $G$ is finitely presented, then it must contain a finite index subgroup $H$ that surjects into $\Z$~\cite{cohen2020lamplighters} (groups with this property are called \define{virtually indicable}),
		\item If $G$ is finitely presented and is quasi-isometric to a finite presented group $H$, then there must exist $G_0\leq G$ and $H_0\leq H$ finite index subgroups such that $H_0$ is isomorphic to a quotient of $G_0$ by a finite subgroup~\cite{cohen2017large}.
	\end{itemize}
\end{proposition}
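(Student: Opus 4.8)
The plan is to treat the seven assertions separately, leaning on the canonical constructions of Section~\ref{sec:canonical} for the two that the article proves itself and citing the literature for the remaining five. Concretely, the commensurability statement is the theorem of Carroll and Penland~\cite{carroll2015periodic}, the translation-like statement is due to Jeandel~\cite{jeandel2015translation}, and the three necessary conditions in the second list are, respectively,~\cite{jeandel2015translation,marcinkowski2014aperiodic} (that every $n$ divides the index of some finite-index subgroup), \cite{cohen2020lamplighters} (virtual indicability in the finitely presented case), and~\cite{cohen2017large} (the quasi-isometry statement). For these five I would reproduce the statements and point to the references; each has its own argument that I would not reprove here.

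For the second bullet I would start from a weakly aperiodic $H$-SFT $X$ and pass to its free extension $\Free_H^G(X)$ (Definition~\ref{def:free_extension}). The steps are: (1) $\Free_H^G(X)$ is a non-empty $G$-SFT, since the same finite family of forbidden patterns that defines $X$ defines $\Free_H^G(X)$, and spreading a fixed configuration of $X$ along a transversal of $H$ in $G$ exhibits a point of $\Free_H^G(X)$; (2) no point of $\Free_H^G(X)$ is periodic, for if $y\in\Free_H^G(X)$ had stabilizer $K$ of finite index in $G$, then $K\cap H$ would have finite index in $H$ and would fix the $H$-slice of $y$, which lies in $X$, contradicting weak aperiodicity of $X$. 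Step (2) can alternatively be read off Barbieri's description of the stabilizer of a free extension (Theorem~\ref{thm:freeX}). This is exactly the content of Lemma~\ref{lem:free_extension}, which I would invoke.

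For the third bullet I would start from a weakly aperiodic $(G/N)$-SFT $Y$ and pass to its pull-back $\pi^{*}(Y)$ along the projection $\pi\colon G\to G/N$. Non-emptiness is immediate; that $\pi^{*}(Y)$ is an SFT uses finite generation of $N$ precisely to encode the constraint that $x$ be constant on $N$-cosets by finitely many forbidden patterns, which, together with the pulled-back forbidden patterns of $Y$, define $\pi^{*}(Y)$. Proposition~\ref{prop:quotient} then yields $\stab(y\circ\pi)=\pi^{-1}(\stab(y))$, so that $[G:\stab(y\circ\pi)]=[G/N:\stab(y)]$; hence $\pi^{*}(Y)$ has a periodic point if and only if $Y$ does, and $\pi^{*}(Y)$ is weakly aperiodic.

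The only place calling for genuine care — and hence the main obstacle — is checking that the finite-type property and the stabilizer formulas really survive the free extension and the pull-back, with the auxiliary hypotheses (existence of a transversal of $H$; finite generation of $N$) used at exactly the right point. Since those verifications are precisely Lemma~\ref{lem:free_extension} and Proposition~\ref{prop:quotient}, once these are established the proposition follows by assembling the pieces, the first and fourth bullets and the entire second list being handled by citation.
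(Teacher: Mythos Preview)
Your proposal is correct and matches the paper's own treatment: the paper gives no separate proof of Proposition~\ref{prop:WA} but simply cites the literature for five of the items and points forward to Lemma~\ref{lem:free_extension} and Proposition~\ref{prop:quotient} for the remaining two, exactly as you do. Your spelled-out arguments for those two bullets (finite-index intersection with $H$ for the free extension, and the index identity $[G:\pi^{-1}(\stab(y))]=[G/N:\stab(y)]$ for the pull-back) are the intended ones; just note that the paper's notation for the free extension is $\upa{X}$ rather than $\Free_H^G(X)$.
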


\section{Canonical constructions}
\label{sec:canonical}

This section is focused on introducing constructions that allow us to go from a subshift defined over a group, to one defined on a subgroup or quotient. These constructions appear under varied names throughout the literature, and have been used to prove many results. We will point out their uses as we introduce them.

\subsection{Free extension}

The first of the constructions we look at is the \define{free extension}. This construction is perhaps the most direct way to define a subshift on a group starting from a subshift on one of its subgroups. It is because of this that it appears often in the literature, though not always under the same name. Free extensions have been used by Hochman and Meyerovitch~\cite{hochman2010entropies} to characterize the entropies of $\Z^d$-SFTs, by Ballier and Stein for the Domino Problem~\cite{ballier2018domino}, by Jeandel to prove results about strongly and weakly aperiodic SFTs and the Domino Problem~\cite{jeandel2015aperiodic,jeandel2015translation}, by Carrol and Penland to prove the commensurability invariance of aperiodicity~\cite{carroll2015periodic}, by Barbieri to study the set of possible entropies of certain amenable groups~\cite{barbieri2021entropies}, and by Barbieri, Sablik and Salo to obtain simulation theorems on direct products of groups~\cite{barbieri2023soficity}. Recently Raymond has made a careful study of the properties of free extensions in order to study SFTs on locally finite groups~\cite{raymond2023shifts}. 
\begin{definition}
	\label{def:free_extension}
	Let $G$ be a group, $H\leq G$ a subgroup, $X\subseteq A^H$ an $H$-subshift. The \define{free extension} of $X$ to $G$, denoted $X^{\uparrow}$, is the $G$-subshift defined as, 
$$X^{\uparrow} = \{x\in A^G \mid \forall g\in G, \ x|_{gH}\in X\}.$$
\end{definition}

\begin{remark}
	\label{rem:periodic}
There is an alternative way of lifting a subshift over a subgroup to the whole group, which also appears regularly in the literature. Given an $H$-subshift $X\subseteq A^{H}$, and a set of left coset representatives $L$, the \define{periodic (or trivial) extension} of $X$ is the subshift
$$X^{\Uparrow} = \{y\in A^G \mid \exists x\in X, \forall l\in L, \ y|_{lH} = x\}\subseteq\upa{X}.$$
This extension famously appears on the simulation results independently discovered by Hochman~\cite{hochman2009effective}, Aubrun and Sablik~\cite{aubrun2013simulation}, and Durand, Romaschenko and Shen~\cite{durand2012fixed}. It has also been used to prove a Higman Embedding Theorem for subshifts~\cite{jeandel2019characterization}, characterize extender entropies of $\Z^d$-subshifts~\cite{callard2024computability}, and for simulation theorems on other finitely generated groups~\cite{barbieri2019geometric,barbieri2019simulation,barbieri2021groups}.
\end{remark}

\begin{lemma}
\label{lem:free_extension}
	Let $X\subseteq A^{H}$ be an $H$-subshift generated by the set of forbidden patterns $\Fo$, that is, $X = \X^{H}_{\Fo}$. Then, the free extension satisfies the following
	 \begin{enumerate}
	 	\item $\upa{X} = \X^{G}_{\Fo}$,
	 	\item $\upa{X}$ is empty if and only if $X$ is empty,
	 	\item for every $x\in\upa{X}$, $\stab(x)\cap H \subseteq \stab(x|_H)$.
	 \end{enumerate}
\end{lemma}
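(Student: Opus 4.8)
The plan is to prove each of the three claims directly from the definition of the free extension, using the coset decomposition $G = \bigsqcup_i g_i H$.

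For claim (1), I would show both inclusions. Recall that $\upa{X} = \{x \in A^G \mid \forall g \in G,\ x|_{gH} \in X\}$. The subtlety is that $x|_{gH}$ is a map on the coset $gH$, not on $H$, so to say it ``lies in $X$'' one implicitly identifies $gH$ with $H$ via $h \mapsto gh$; I would make this identification explicit at the outset. With that convention, a pattern $p \in \Fo$ with $\supp(p) \subseteq H$ appears in $x|_{gH}$ (viewed as an element of $A^H$) if and only if it appears in $x$ at some position inside $gH$. So $x \in \upa{X}$, i.e. $x|_{gH}$ avoids $\Fo$ for every $g$, is equivalent to $x$ avoiding $\Fo$ globally, which is exactly $x \in \X^G_\Fo$. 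This gives $\upa{X} = \X^G_\Fo$, and in particular shows $\upa{X}$ is an SFT when $X$ is.

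For claim (2), one direction is immediate: if $X = \emptyset$ then the defining condition for $\upa{X}$ is unsatisfiable (take any $g$), so $\upa{X} = \emptyset$; equivalently, by claim (1), $\X^H_\Fo = \emptyset$ forces $\X^G_\Fo = \emptyset$ only after we argue the converse. For the converse, suppose $X \neq \emptyset$ and pick $x_0 \in X \subseteq A^H$. Fix a set $L$ of left-coset representatives for $H$ in $G$ and define $x \in A^G$ by $x(lh) = x_0(h)$ for $l \in L$, $h \in H$ — this is the periodic extension of Remark~\ref{rem:periodic}. Then for every $g \in G$, writing $g = lh'$, the restriction $x|_{gH}$ equals (the translate of) $x_0 \in X$, so $x \in \upa{X}$ and $\upa{X} \neq \emptyset$. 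This is the standard and only mildly technical step; the bookkeeping with coset representatives is the one place to be careful.

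For claim (3), let $x \in \upa{X}$ and $g \in \stab(x) \cap H$. I want $g \cdot (x|_H) = x|_H$ in $A^H$. Since $H \leq G$ is itself a coset ($H = 1_G H$), the definition of $\upa{X}$ already gives $x|_H \in X$. Now for any $h \in H$, $(g \cdot x|_H)(h) = (x|_H)(g^{-1}h) = x(g^{-1}h)$, and since $g \in \stab(x)$ we have $x(g^{-1}h) = (g \cdot x)(h)^{\phantom{1}}$... more directly: $g \cdot x = x$ implies $x(g^{-1}h) = x(h)$ for all $h \in G$, in particular for all $h \in H$ (this uses $g \in H$ so that $g^{-1}h \in H$, keeping us inside the support of $x|_H$). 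Hence $g \cdot (x|_H) = x|_H$, i.e. $g \in \stab(x|_H)$. I do not expect any real obstacle here; the main thing to get right throughout is the identification of a coset $gH$ with $H$ used implicitly in the definition of $\upa{X}$, and making sure claim (3) only quantifies over $h \in H$ so that all the points referenced lie in the domain of the restricted configuration.
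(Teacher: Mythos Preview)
Your proposal is correct and follows essentially the same route as the paper's own proof: both arguments verify (1) by translating ``pattern $p$ appears in $x|_{gH}$'' into ``$p$ appears in $x$ at a position in $gH$'', prove the non-trivial direction of (2) via the periodic extension $x(lh)=x_0(h)$ over a set $L$ of left-coset representatives (the paper computes explicitly that $x|_{gH}=h^{-1}\cdot x_0$ when $g=lh$, which is the bookkeeping you flag), and handle (3) by the one-line computation $x(g^{-1}h)=x(h)$ for $g\in\stab(x)\cap H$ and $h\in H$. Your added emphasis on the implicit identification $gH\cong H$ via $h\mapsto gh$ is a clarification the paper leaves tacit, but otherwise the two proofs coincide.
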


\begin{proof}
	\begin{enumerate}
		\item Take $x\in\upa{X}$, and suppose there is $g\in G$ and $p\in\Fo$ such that $x|_{g\cdot\supp(p)} = p$. Then, the configuration $y = x|_{gH}$ contains the forbidden pattern $p$, which contradicts the fact that $y\in X$. Conversely, take $x\in\X_{\Fo}^G$ and $g\in G$. Because the supports of patterns from $\Fo$ are contained in $H$, $x|_{gH}\in \X_{\Fo}^{H} = X$. Thus, $x\in\upa{X}$.
		
		\item For any $x\in\upa{X}$, we have $x|_{H}\in X$. Take $L$ a set of left coset representatives for $H$. Given a configuration $y\in X$, we define $x\in A^{G}$ by $x(lh) = y(h)$ for all $l\in L$ and $h\in H$. Then, given $g\in G$ there exists $l\in L$ and $h\in H$ such that $g = lh$. Then, $x(gh') = x(lhh')  = y(hh')$, for all $h'\in H$. Thus, $x|_{gH} = h^{-1}\cdot y\in X$.
		
		\item Take $x\in\upa{X}$, $h\in\stab(x)\cap H$, and $y = x|_{H}$. Then,
		$$h\cdot y(h') = y(h^{-1}h') = x(h^{-1}h') = x(h') = y(h').$$
		Therefore, $h\in\stab(y)$.
	\end{enumerate}
\end{proof}

A more manageable way to understand free extensions is through the cosets of the subgroup. 
\begin{lemma}
	Take $L$ a set of left coset representatives for $G/H$ and an $H$-subshift $X$. Then, $y\in\upa{X}$ if and only if there exist a collection of configurations from $X$, $(x_{l})_{l\in L}$, such that $y|_{lH} = x_l$.
\end{lemma}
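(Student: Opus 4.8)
The plan is to unwind the definition of the free extension in terms of the fixed transversal $L$, the key structural fact being the bijection $G\cong L\times H$ given by $g\leftrightarrow(l,h)$ whenever $g=lh$, and to keep careful track of how the shift action on $A^H$ intervenes when one passes from an arbitrary left coset $gH$ to one of the chosen representative cosets $lH$. As a preliminary I would make explicit that the equality ``$y|_{lH}=x_l$'' is to be read under the standard identification of the coset $lH$ with $H$ via $h\mapsto lh$, i.e.\ it means $y(lh)=x_l(h)$ for all $h\in H$.

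For the forward direction, suppose $y\in\upa{X}$. For each $l\in L$ define $x_l\in A^H$ by $x_l(h)=y(lh)$; under the above identification this is precisely $y|_{lH}$. Applying Definition~\ref{def:free_extension} with $g=l$ gives $y|_{lH}\in X$, that is $x_l\in X$, and the equality $y|_{lH}=x_l$ holds by construction. This produces the required family $(x_l)_{l\in L}$.

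For the converse, suppose $(x_l)_{l\in L}$ is a family of configurations from $X$ with $y(lh)=x_l(h)$ for all $l\in L$ and $h\in H$. To conclude $y\in\upa{X}$ I must verify $y|_{gH}\in X$ for every $g\in G$. Write $g=lh_0$ with $l\in L$ and $h_0\in H$ the unique such decomposition; then for $h\in H$ one computes $y(gh)=y(lh_0h)=x_l(h_0h)=(h_0^{-1}\cdot x_l)(h)$, so that $y|_{gH}$ is the configuration $h_0^{-1}\cdot x_l$. Since $X$ is an $H$-subshift it is invariant under the shift action of $H$, and $x_l\in X$, hence $h_0^{-1}\cdot x_l\in X$, as needed. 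This settles both implications.

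This argument is essentially a reorganisation of the proof of part~(2) of Lemma~\ref{lem:free_extension}; the only point that requires genuine care — and the one most easily fumbled — is the appearance of the shift by $h_0^{-1}$ when reducing an arbitrary coset to a transversal element, which is exactly where one must use that $X$ is shift-invariant and not merely an arbitrary set of configurations. I do not anticipate any deeper obstacle.
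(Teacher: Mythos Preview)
Your proof is correct and follows essentially the same route as the paper's: define $x_l=y|_{lH}$ for the forward direction, and for the converse write an arbitrary $g$ as $lh_0$ and observe that $y|_{gH}=h_0^{-1}\cdot x_l\in X$ by shift-invariance. Your version is slightly more explicit about the identification $lH\cong H$ and about where shift-invariance is actually used, but the argument is the same.
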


\begin{proof}
	For a configuration $y\in\upa{X}$, we define our collection of configurations as $x_l = y|_{lH}\in X$. Conversely, let $y$ be defined by the collection $\{x_{l}\}_{l\in L}$. Take $g\in G$, which is uniquely written as $g = lh$ for some $l\in L, h\in H$. Then, $y(gh') = y(lhh') = x_l(hh')$ for all $h'\in H$, implying that $y|_{gH} = h^{-1}\cdot x_l\in X$. Thus, $y\in\upa{X}$.
\end{proof}





\subsection{Higher power and higher block}
\label{subsec:higher}

Higher block and higher power subshfits are standard and useful constructions for $\Z$-subshifts. They allow both to re-scale subshifts -- in order to find letter-to-letter sliding-block codes and nearest neighbor subshifts-- and to go from a group to a finite index subgroup. This construction has been used by Carroll and Penland to prove aperiodicity is a commensurability invariant~\cite{carroll2015periodic}, and by Aubrun, Barbieri and Jeandel to prove every SFT is conjugate to a nearest neighbor SFT~\cite{aubrun2018domino}.

\begin{definition}
	\label{def:higher_power}
	Let $G$ be a group, $H\leq G$ a finite index subgroup, and $X\subseteq A^{G}$. Take $R$ a set of right coset representatives. The $R$-\define{higher power} of $X$, denoted $X^{[R]}$, is the $H$-subshift over the alphabet $A^R$ defined as, 
	$$X^{[R]} = \left\{x\in (A^R)^H \mid \exists y\in X,\  \forall(h,r)\in H\times R,\ x(h)(r) = y(hr)\right\}.$$
\end{definition}

\begin{lemma}
	Let $X\subseteq A^G$ be a $G$-subshift, $H\leq G$ a finite index subgroup, and $R$ a set of right coset representatives. Then, $X$ is non-empty if and only if $X^{[R]}$ is non-empty.
\end{lemma}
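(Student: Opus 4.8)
The plan is to establish both directions of the equivalence by directly manipulating configurations via the explicit coordinate description in Definition~\ref{def:higher_power}. The statement is essentially a bookkeeping exercise: the higher power encodes a configuration $y\in X$ as a configuration over the finite index subgroup $H$, where each "super-cell" indexed by $h\in H$ records the values of $y$ on the coset $hR$. Since $R$ is a set of right coset representatives, $G = \bigsqcup_{r\in R} Hr$, so every element $g\in G$ is uniquely $g = hr$ with $h\in H$, $r\in R$; this bijection between $G$ and $H\times R$ is what makes the encoding reversible.

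First I would prove the easy direction: if $X^{[R]}$ is non-empty, then by definition any $x\in X^{[R]}$ witnesses the existence of some $y\in X$, so $X$ is non-empty. This is immediate from the definition, which quantifies over "$\exists y\in X$''.

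For the converse, suppose $X$ is non-empty and pick $y\in X$. I would define $x\in (A^R)^H$ by the formula $x(h)(r) = y(hr)$ for all $h\in H$, $r\in R$. This is well-defined since $hr\in G$. To conclude $x\in X^{[R]}$ I need only exhibit a witnessing configuration in $X$, and $y$ itself works by construction; the displayed condition $x(h)(r) = y(hr)$ holds for all $(h,r)\in H\times R$ by the very definition of $x$. Hence $X^{[R]}$ is non-empty.

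I do not expect any serious obstacle here — unlike the analogous statements one might want about SFTs being preserved or stabilizers transforming in a controlled way, the non-emptiness claim does not require checking the shift-equivariance or closedness of $X^{[R]}$, nor does it require translating forbidden patterns. The only point demanding a modicum of care is making sure the map $g\mapsto(h,r)$ with $g=hr$ is genuinely a bijection $G\to H\times R$, which follows from $R$ being a complete set of right coset representatives; this is used implicitly to see that $x$ is a total function on $H$ with values in $A^R$ and that no information about $y$ is lost. If one wanted to be thorough one could also remark that $X^{[R]}$ is indeed a subshift (closed and $H$-equivariant), but that is not needed for this lemma and presumably was verified when the higher power was introduced.
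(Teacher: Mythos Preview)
Your proof is correct and follows essentially the same approach as the paper: both directions are immediate from Definition~\ref{def:higher_power}, with the forward direction encoding $y\in X$ as $x(h)(r)=y(hr)$ and the reverse reading off the witnessing $y$ from the existential in the definition. Your write-up is more expansive than the paper's (which is two sentences), but the underlying argument is identical.
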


\begin{proof}
	For a configuration $y\in X$ we define $x\in A^{H}$ by $x(h) = y|_{hR}$. By definition, $x\in X^{[R]}$. Conversely, for a configuration $x'\in X^{[R]}$ there exists $y'\in X$ such that $x(h)(r) = y(hr)$ for all $h\in H$ and $r\in R$.
\end{proof}

Given a set of forbidden patterns $\Fo$ for the base $G$-subshift $X$, we create a set of forbidden patterns, $\Fo'$, for $X^{[R]}$. Take $q\in\Fo$ with support $F\Subset G$, and $r\in R$. For each $f\in F$ there exists $h_f\in H\cap RFR^{-1}$ and $r_f\in R$ such that $h_fr_f = rf$. Define the set $P(q,r) = \{h_{f} \mid f\in F\}$. Now, for each $q\in Q$ and $r\in R$ the set $\Fo'$ contains all the patterns $p$ of support $P(q,r)$ such that $p(h_f)(r_f) = q(f)$.

\begin{lemma}
	\label{lem:patterns_higher_block}
	Let $X\subseteq A^{G}$ be an $G$-subshift given by the set of forbidden patterns $\Fo$. Then $X^{[R]} = \X_{\Fo'}^H$.
\end{lemma}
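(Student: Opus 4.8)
The plan is to show $X^{[R]} = \X_{\Fo'}^H$ by a double inclusion, transporting occurrences of forbidden patterns between the two presentations through the correspondence $hr \leftrightarrow (h,r)$ that defines the higher power. First I would fix notation carefully: for $q \in \Fo$ with support $F \Subset G$ and $r \in R$, recall that each $f \in F$ gives a unique decomposition $rf = h_f r_f$ with $h_f \in H$ and $r_f \in R$ (this is just the coset decomposition of $rf$ over $H\backslash G$, using that $R$ is a set of right coset representatives), and $\Fo'$ collects all patterns $p$ on support $P(q,r) = \{h_f \mid f \in F\}$ with $p(h_f)(r_f) = q(f)$.

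For the inclusion $X^{[R]} \subseteq \X_{\Fo'}^H$: suppose $x \in X^{[R]}$ comes from $y \in X$, and suppose for contradiction that some forbidden pattern $p \in \Fo'$ — arising from data $(q,r)$ — appears in $x$, say at $h \in H$, so $x(h h_f)(r_f) = q(f)$ for all $f \in F$. Using the defining relation $x(h')(r') = y(h'r')$, this reads $y(h h_f r_f) = q(f)$, i.e. $y(h r f) = q(f)$ for all $f\in F$, so $q$ appears in $y$ at the position $hr$ — contradicting $y \in X = \X_\Fo^G$. Hence $x \in \X_{\Fo'}^H$.

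For the reverse inclusion $\X_{\Fo'}^H \subseteq X^{[R]}$: given $x \in \X_{\Fo'}^H$ over alphabet $A^R$, define $y \in A^G$ by $y(hr) = x(h)(r)$ for the unique decomposition $g = hr$ with $h\in H,\ r\in R$ (well-defined since each $g\in G$ decomposes uniquely). It suffices to check $y \in X = \X_\Fo^G$; then $x \in X^{[R]}$ by construction. Suppose $q \in \Fo$ appears in $y$ at some $g = h_0 r_0 \in G$; then $y(g f) = q(f)$ for all $f \in F = \supp(q)$. Writing $g f = h_0 (r_0 f) = h_0 h_f r_f$ (applying the decomposition $r_0 f = h_f r_f$ associated to $(q, r_0)$), we get $x(h_0 h_f)(r_f) = y(h_0 h_f r_f) = q(f)$, which says precisely that the pattern $p \in \Fo'$ built from $(q, r_0)$ appears in $x$ at $h_0$ — contradicting $x \in \X_{\Fo'}^H$. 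Therefore $y \in X$ and we are done.

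The only delicate point — and the main thing to be careful about rather than a genuine obstacle — is bookkeeping the coset arithmetic: one must consistently use that $R$ is a set of \emph{right} coset representatives so that $G = \bigsqcup_{r\in R} Hr$, giving the unique decomposition $g = hr$, and that the map $f \mapsto (h_f, r_f)$ defined by $rf = h_f r_f$ is what translates a $G$-pattern at position $rg$ into an $(A^R)$-pattern at position $g$. Once these decompositions are pinned down, both inclusions are a symmetric unwinding of the definitions; no finiteness or topological input beyond $[G:H] < \infty$ is needed, since $\Fo$ finite makes $\Fo'$ finite (hence $X^{[R]}$ is indeed an SFT), though that observation is not strictly required for the stated equality.
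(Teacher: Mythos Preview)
Your proof is correct and follows essentially the same approach as the paper's own proof: both directions argue by contradiction, translating an occurrence of a forbidden pattern between $y\in A^G$ and $x\in (A^R)^H$ via the identity $hh_f r_f = hrf$ coming from the coset decomposition. Your write-up is, if anything, slightly more explicit about the uniqueness of the decomposition $g=hr$ and the role of right cosets.
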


\begin{proof}
	Take $x\in X^{[R]}$. By definition there is a configuration $y\in X$ such that $x(h)(r) = y(hr)$ for all $h\in H$ and $r\in R$. Suppose $x\notin\X_{\Fo'}$. Then, there exists $h\in H$, $q\in\Fo$, $r\in R$ and $p\in\Fo'$ of support $P(q,r)$ such that $x|_{hP(q,r)} = p$. For each $f\in\supp(q)$,
	$$y(hrf) = y(hh_fr_f) = x(hh_f)(r_f) = q(f).$$
	In other words, $y|_{hr\cdot\supp(q)} = q$, which is a contradiction.
	
	Conversely, take $x\in\X_{\Fo'}$ and define $y\in A^G$ as $y(hr) = x(h)(r)$ for all $h\in H$ and $r\in R$. Suppose $y\notin X$. Then, there exists $h\in H$, $r\in R$ and $q\in\Fo$ such that $y|_{hr\cdot\supp(q)} = q$. Then, for all $h_f\in P(q,r)$
	$$x(hh_f)(r_f) = y(hh_fr_f) = y(hrf) = q(f).$$
	This means $x|_{hP(q,r)}\in\Fo'$, which is a contradiction. Therefore $y\in X$ and $x\in X^{[R]}$.
\end{proof}

A slight generalization of the $R$-higher power shift has also been used in the literature (\cite{carroll2015periodic} for instance). This generalization, called the \define{$R$-higher block shift}, consists in taking any finite set $R\Subset G$ such that $HR = G$ that is not necessarily a set of coset representatives. Except for this change, the definition is the same. 



\subsection{Pull-back}

Our next construction allows us to go from a quotient to a group. We call it the \define{pull-back shift}. This construction has been used by Ballier and Stein to show that the undecidability of the Domino Problem and weak aperiodicity can be transported from the quotient to the group~\cite{ballier2018domino}, by Jeandel to construct strongly aperiodic SFTs on polycyclic groups~\cite{jeandel2015aperiodic}, and by Bartholdi and Salo to obtain simulation theorems for the Lamplighter group~\cite{bartholdi2024shifts}. Furthermore, the last two authors argue that the simulation results mentioned in Remark~\ref{rem:periodic} are in fact results about pull-backs. This is restated by Grigorchuk and Salo in~\cite{grigorchuk2024sft}.

\begin{definition}
	Let $G$ be a group, $N\trianglelefteq G$ a normal subgroup, and $X\subseteq A^{G/N}$ an $G/N$-subshift. The \define{pull-back} of $X$ to $G$, denoted $\pi^*(X)$, is the $G$-subshift defined as, 
	$$\pi^*(X) = \{x\circ\pi\in A^G \mid x\in X\},$$
	where $\pi:G\to G/N$ is the quotient map.
\end{definition}

\begin{remark}
	Notice that $\pi^*(X)$ is non-empty if and only if $X$ is non-empty.
\end{remark}
 
Now, let us build a set of forbidden patterns $\Fo'$ for $\pi^*(X)$ from a set of forbidden patterns $\Fo$ for $X$. Let $\rho:G/N\to G$ be a section and $T$ a set of generators for $N$. The set $\Fo'$ contains for each $t\in T$ all patterns $q:\{1_G,t\}\to A$ such that $q(1_G)\neq q(t)$, and for each $p\in\Fo$ a pattern $q:\rho(\supp(p))\to A$ defined by $q(\rho(g)) = p(g)$.

\begin{lemma}
	\label{lem:patterns_pull_back}
	Let $X\subseteq A^{G/N}$ be a $G/N$-subshift given by the set of forbidden patterns $\Fo$. Then, $\pi^*(X) = \X_{\Fo'}^G$. In particular, if $N$ is finitely generated and $X$ is an SFT, $\pi^*(X)$ is an SFT.
\end{lemma}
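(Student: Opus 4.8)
The plan is to verify the two inclusions $\pi^*(X) \subseteq \X_{\Fo'}^G$ and $\X_{\Fo'}^G \subseteq \pi^*(X)$ directly from the definitions, exactly in the style of the proofs of Lemma~\ref{lem:patterns_higher_block} and the free extension lemmas above. First I would take $x \in \pi^*(X)$, so $x = y \circ \pi$ for some $y \in X$. The forbidden patterns in $\Fo'$ come in two families. For the ``constant on cosets'' patterns $q:\{1_G,t\}\to A$ with $t \in T$ and $q(1_G)\neq q(t)$: since $\pi(gt) = \pi(g)$ for $t\in N$, we get $x(gt) = y(\pi(gt)) = y(\pi(g)) = x(g)$, so no such pattern can appear. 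For the patterns coming from $\Fo$ via the section $\rho$: if $x|_{g\rho(\supp(p))} = q$ where $q(\rho(h)) = p(h)$, then $y(\pi(g)\pi(\rho(h))) = y(\pi(g\rho(h))) = x(g\rho(h)) = p(h)$, and since $\pi\circ\rho = \mathrm{id}_{G/N}$ this says $p$ appears in $y$ at position $\pi(g)$, contradicting $y \in X = \X_\Fo^{G/N}$. Hence $x \in \X_{\Fo'}^G$.

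For the converse, take $x \in \X_{\Fo'}^G$. The key observation is that, because no pattern from the first family of $\Fo'$ appears, $x$ is constant on every left coset of $N$: for any generator $t \in T$ and any $g$, $x(gt) = x(g)$, and since $T$ generates $N$ this propagates along the whole coset $gN$. Therefore $x$ factors through $\pi$, i.e.\ there is a well-defined $y : G/N \to A$ with $x = y\circ\pi$; concretely set $y(\bar g) = x(\rho(\bar g))$. It remains to check $y \in X$, i.e.\ that no $p \in \Fo$ appears in $y$. If $p$ appeared in $y$ at some position $\bar g$, then lifting via $\rho$ and using $x = y\circ\pi$ together with $\pi\circ\rho=\mathrm{id}$ one recovers the pattern $q$ of support $\rho(\supp(p))$ appearing in $x$ at a suitable position, contradicting $x\in\X_{\Fo'}^G$. (One small point to handle carefully here: translating ``$p$ appears in $y$ at $\bar g$'' into ``$q$ appears in $x$ at $g'$'' requires choosing a lift $g'$ of $\bar g$ and using that $x$ is $N$-invariant so the choice of lift is irrelevant; this is routine once coset-constancy of $x$ is established.) Thus $y \in X$ and $x = y \circ \pi \in \pi^*(X)$, giving $\X_{\Fo'}^G = \pi^*(X)$.

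For the ``in particular'' clause: if $N$ is finitely generated we may choose the generating set $T$ to be finite, and if $X$ is an SFT we may choose $\Fo$ finite; then $\Fo'$ is finite (it has at most $|T|\cdot|A|^2$ patterns of the first family and $|\Fo|$ of the second), so $\pi^*(X) = \X_{\Fo'}^G$ is an SFT.

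The only mildly delicate point — and the part I expect to need the most care — is the converse direction: showing that being forced constant on $N$-cosets by the nearest-neighbor rules for the generators $T$ really does force $x$ to factor through $\pi$ (this uses that $T$ \emph{generates} $N$, not merely that $T \subseteq N$, so one argues along a word in $T$ representing an arbitrary element of $N$), and then cleanly matching up supports under $\rho$ so that an occurrence of $p$ in $y$ yields an occurrence of the associated $q\in\Fo'$ in $x$. Everything else is bookkeeping with the quotient map and the section.
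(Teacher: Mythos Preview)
Your proposal is correct and follows essentially the same approach as the paper's proof: both directions are handled by direct verification, first showing that $y\circ\pi$ avoids both families of patterns in $\Fo'$, and conversely using the nearest-neighbor constraints on $T$ to force coset-constancy, defining the factored configuration via the section $\rho$, and checking it lies in $X$. Your write-up is in fact slightly more careful than the paper's about why $T$ generating $N$ is needed and about the lift in the converse direction, but the argument is the same.
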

 
\begin{proof}
	Take $y\in\pi^*(X)$ and $x\in X$ such that $y = x\circ\pi$. Suppose $y\notin\X_{\Fo'}$. If there are distinct $a,b\in A$, $g\in G$ and $t\in T$, such that $y(g) = a$ and $y(gt) = b$, this means $x(\pi(gt)) = x(\pi(g)) = a$ and $x(\pi(g)) = b$ which is a contradiction. On the other hand, if there is $g\in G$ and $p\in\Fo$ such that $y|_{g\cdot\rho(\supp(p))} = p$, then $x|_{\pi(g)\cdot\supp(p)} = p$ which is a contradiction. Therefore, $y\in\X_{\Fo'}$. 
	
	Conversely, take $y\in\X_{\Fo'}$ and define $x = y\circ\rho\in A^{G/N}$. Now, for every $g\in G$ there exists $h\in N$ such that $\rho(\pi(g)) = gh$. Then, $(x\circ\pi)(g) = y(gh) = y(g)$, as $y$ is invariant on $N$. Finally, if $x\notin X$ because it has a pattern $p\in\Fo$, $y$ would have the pattern $p\circ\rho\in\Fo'$ which is a contradiction. Thus $x\in X$ and $y\in\pi^*(X)$.
\end{proof}

\subsection{Push-forward}
\label{subsec:push_forward}

Our final construction is the \define{push-forward shift}. So far this construction has not been present in the literature, although it does fall under the definition of pull-back shifts as defined by Bartholdi and Salo~\cite{bartholdi2024shifts}.\\

To transport a subshift from a group to its quotient we must ask additional properties on the subshift. To do this we introduce the $N$-fixed subshift. Let $A$ be a finite alphabet, $G$ a finitely generated group with $N$ a normal subgroup. We define the subshift
$$\text{Fix}_A(N) = \{x\in A^{G}\mid n\cdot x = x, \ \forall n\in N\}.$$

\begin{remark}
	For any subgroup $N$, $\text{Fix}_A(N)$ is always a closed set of $A^G$, but it is only shift invariant when $N$ is normal. In the latter case, the subshift is conjugate to $A^{G/N}$.
\end{remark}

Notice when $N$ is finitely generated, $\Fix_A(N)$ is an SFT. If we take $S = \{s_1, ..., s_m\}$ a set of symmetric generators for $N$, we see that $\text{Fix}_R(N)$ is the SFT by the set of forbidden rules given by 
$$\{p:\{1,s_i\}\to R \mid s_i\in S, \ p(1)\neq p(s_i)\}.$$

\begin{remark}
    A subshift $X\subseteq A^G$ is contained in $\Fix_A(N)$ if and only if $N\leq \ker(X)$.
\end{remark}

\begin{definition}
	Let $G$ be a group, $N\trianglelefteq G$ a finitely generated normal subgroup, $X\subseteq\Fix_A(N)$ a $G$-subshift, and $\rho:G/N\to G$ a section. The \define{push-forward} of $X$ to $G/N$, denoted $\rho^*(X)$, is the $G$-subshift defined as, 
	$$\rho^*(X) = \{x\circ\rho\in A^{G/N} \mid x\in X \}.$$
\end{definition}

As was the case with the pull-back, we have that $\rho^*(X)$ is non-empty if and only if $X$ is non-empty.

\begin{lemma}
	Let $X\subseteq\Fix_A(N)$ be a $G$-subshift. Then, the push-forward is independent of the section, that is, for any two sections $\rho_1,\rho_2:G/N\to G$, $\rho_1^*(X) = \rho_2^*(X)$.
\end{lemma}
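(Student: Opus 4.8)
The plan is to show that the push-forward $\rho^*(X)$ does not depend on the choice of section by exploiting the fact that any two sections differ only by an element of the normal subgroup $N$, combined with the hypothesis that every configuration in $X$ is fixed by $N$. First I would fix two sections $\rho_1,\rho_2:G/N\to G$ and recall that both satisfy $\pi\circ\rho_i = \mathrm{id}_{G/N}$, so for each coset $gN\in G/N$ the elements $\rho_1(gN)$ and $\rho_2(gN)$ lie in the same coset; hence there is a well-defined element $n_{gN} := \rho_2(gN)^{-1}\rho_1(gN)\in N$, equivalently $\rho_1(gN) = \rho_2(gN)\, n_{gN}$.

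Next I would take an arbitrary configuration $x\in X$ and compare $x\circ\rho_1$ with $x\circ\rho_2$ pointwise. For any $gN\in G/N$ we have
$$(x\circ\rho_1)(gN) = x(\rho_1(gN)) = x(\rho_2(gN)\, n_{gN}).$$
Now I would invoke the hypothesis $X\subseteq\Fix_A(N)$: since $n_{gN}^{-1}\in N$, we have $n_{gN}^{-1}\cdot x = x$, which unpacking the shift action means $x(h) = (n_{gN}^{-1}\cdot x)(h) = x(n_{gN}h)$ for all $h\in G$ — wait, I need to be careful with the direction of the action. Using $(g\cdot x)(h) = x(g^{-1}h)$, the condition $n\cdot x = x$ for all $n\in N$ gives $x(n^{-1}h) = x(h)$ for all $n\in N, h\in G$, i.e. $x$ is constant on right cosets $Nh$. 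But here I need $x$ constant on the left coset: $x(\rho_2(gN) n_{gN}) = x(\rho_2(gN))$. Since $N$ is normal, $\rho_2(gN) n_{gN} = n' \rho_2(gN)$ for some $n'\in N$, and then $x(n'\rho_2(gN)) = x(\rho_2(gN))$ follows from the right-coset invariance. Thus $(x\circ\rho_1)(gN) = x(\rho_2(gN)) = (x\circ\rho_2)(gN)$, so $x\circ\rho_1 = x\circ\rho_2$ as elements of $A^{G/N}$.

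Since $x\in X$ was arbitrary, this shows $\rho_1^*(X) = \{x\circ\rho_1 : x\in X\} = \{x\circ\rho_2 : x\in X\} = \rho_2^*(X)$, completing the proof. The main obstacle — really the only subtle point — is keeping the left/right coset bookkeeping straight: the shift action convention $(g\cdot x)(h)=x(g^{-1}h)$ makes $\Fix_A(N)$-membership a statement about invariance under a one-sided translation, and one must use normality of $N$ to convert between the two sides so that the section-difference element $n_{gN}$ (which naturally appears on the right of $\rho_2(gN)$) can be absorbed. Everything else is a direct unwinding of definitions.
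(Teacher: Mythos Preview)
Your proof is correct and follows essentially the same approach as the paper: both show that $x\circ\rho_1 = x\circ\rho_2$ pointwise for each $x\in X$ by observing that $\rho_1$ and $\rho_2$ differ by an element of $N$ and then invoking $N$-invariance. The only cosmetic difference is that the paper writes $\rho_1(g) = h\rho_2(g)$ with $h\in N$ on the \emph{left}, which matches the shift convention $(n\cdot x)(h)=x(n^{-1}h)$ directly and avoids the detour through normality that you needed after placing $n_{gN}$ on the right; your handling of that detour is correct, just slightly less economical.
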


\begin{proof}
	Take $x\in\rho_1^*(X)$. By definition there is a configuration $y\in X$ such that $x = y\circ\rho_1$. Because $\rho_1$ and $\rho_2$ are sections, for all $g\in G/N$ there exists some $h\in N$ such that $\rho_1(g) = h\rho_2(g)$. Given that $y\in\Fix_A(N)$, we have that 
	$$x(g) = y(\rho_1(g)) = y(h\rho_2(g)) = y(\rho_2(g)),$$
	for all $g\in G/N$. Thus, $x = y\circ \rho_2$ and therefore belongs in $\rho_2^*(X)$. Because the previous argument is independent of the section, we conclude that $\rho_1^*(X) = \rho_2^*(X)$.
\end{proof}

Because of this result, we talk about \emph{the} push-forward of a subshift $X\subseteq\Fix_A(N)$.

\begin{lemma}
	\label{lem:patterns_push_forward}
	Let $G$ and $N$ be finitely generated, and take $S$ and $T$ finite generating sets for $G/N$ and $N$ respectively. If $X\subseteq A^{G}$ is a nearest neighbor $G$-SFT with respect to $\rho(S)\cup T$, then $\rho^*(X)$ is a nearest neighbor $G/N$-SFT with respect to $S$.
\end{lemma}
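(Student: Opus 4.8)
The plan is to transfer a nearest-neighbor presentation of $X$ across the section $\rho$, using the fact that $X \subseteq \Fix_A(N)$ forces configurations to be $N$-invariant, so that the values $x(\rho(g))$ for $g \in G/N$ already determine $x$ completely. First I would fix the generating set $\rho(S) \cup T$ for $G$ and let $\Fo$ be the finite set of forbidden nearest-neighbor patterns defining $X$ with respect to it; this set splits into two kinds of patterns, those with support $\{1_G, \rho(s)\}$ for $s \in S$, and those with support $\{1_G, t\}$ for $t \in T$. Since $X \subseteq \Fix_A(N)$, every configuration of $X$ is automatically constant on cosets of $N$, so the patterns of the second kind are either already implied (when they forbid $a \neq b$, i.e. are exactly the defining patterns of $\Fix_A(N)$) or are redundant for describing $\rho^*(X)$ — in either case they carry no information about $G/N$ beyond $N$-invariance, which is discharged by passing to the quotient. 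I would then define the candidate forbidden set $\Fo'$ on $G/N$ with respect to $S$ to consist, for each forbidden pattern $q \in \Fo$ of support $\{1_G, \rho(s)\}$, of the nearest-neighbor pattern $q'$ on $G/N$ with support $\{1_{G/N}, s\}$ given by $q'(1_{G/N}) = q(1_G)$ and $q'(s) = q(\rho(s))$.

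The heart of the argument is then the two inclusions $\rho^*(X) = \X^{G/N}_{\Fo'}$. For the forward inclusion, take $x \in X$ and set $\bar x = x \circ \rho$; if $\bar x$ contained a forbidden pattern $q'$ at some $g \in G/N$, I would pull this back: writing $\bar x(g) = x(\rho(g))$ and using that $\rho(g)^{-1}\rho(gs) \in \rho(s) N$ together with $N$-invariance of $x$, one shows $x$ contains the corresponding $q \in \Fo$ near $\rho(g)$, a contradiction. For the reverse inclusion, given $\bar x \in \X^{G/N}_{\Fo'}$ I would build $x = \bar x \circ \pi$ on $G$ (the pull-back), which lies in $\Fix_A(N)$ by construction, and check it avoids every pattern of $\Fo$: the $T$-patterns are avoided because $x$ is $N$-invariant and the $\rho(S)$-patterns are avoided because their images $q'$ are avoided by $\bar x$; here one uses that for $g \in G$ and $s \in S$, $\pi(g)$ and $\pi(g\rho(s))$ differ by $s$ in $G/N$, so the value pair $(x(g), x(g\rho(s)))$ equals $(\bar x(\pi(g)), \bar x(\pi(g) s))$ up to the $N$-coset ambiguity absorbed by invariance. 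Then $x \in X$ and $\bar x = \rho^*(x) \in \rho^*(X)$.

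The main obstacle I anticipate is bookkeeping the interaction between the section $\rho$ (which is not a homomorphism) and the group law: for $g \in G$ the element $\rho(\pi(g))$ differs from $g$ by an element of $N$, and $\rho(\pi(g)\,s) $ differs from $g\,\rho(s)$ by a possibly different element of $N$, so one must repeatedly invoke $N$-invariance of configurations in $\Fix_A(N)$ to see that these discrepancies do not affect any evaluated letter. This is exactly the kind of computation that is routine once the setup is right but easy to get wrong; organizing it cleanly — perhaps by first noting that for $x \in \Fix_A(N)$ one has $x(g) = x(\rho(\pi(g)))$ for all $g$, so that evaluating $x$ anywhere reduces to evaluating it on the image of $\rho$ — is the key simplification. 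Everything else (finiteness of $\Fo'$, its support being nearest-neighbor with respect to $S$) is immediate from the construction and the finiteness of $S$ and $\Fo$.
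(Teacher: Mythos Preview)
Your proposal is correct and follows essentially the same approach as the paper's proof: you define $\Fo'$ by replacing each $\rho(s)$-supported forbidden pattern by the corresponding $s$-supported pattern on $G/N$, and you verify $\rho^*(X) = \X^{G/N}_{\Fo'}$ via the same two inclusions, using $N$-invariance to absorb the discrepancy between $\rho(gs)$ and $\rho(g)\rho(s)$ in the forward direction and constructing $x = \bar x \circ \pi$ for the reverse. Your organizing remark that $x(g) = x(\rho(\pi(g)))$ for $x \in \Fix_A(N)$ is exactly the simplification the paper uses implicitly, and your handling of the $T$-patterns (splitting into the $\Fix_A(N)$-defining ones and any residual ones) is if anything slightly more careful than the paper, which simply assumes $\Fo$ contains all $(a,b,t)$ with $a\neq b$.
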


\begin{proof}
	Let $\Fo$ be a set of nearest neighbor forbidden patterns with respect to $\rho(S)\cup T$, for $X$. Because $X\subseteq\Fix_A(N)$, we suppose that $\Fo$ contains all patterns $(a,b,t)$ for $a,b\in A$, $a\neq b$ and $t\in T$. Then, define the set of forbidden patterns $\Fo'$ that contains $(a,b,s)$ with $a,b\in A$ and $s\in S$, for each pattern $(a,b,\rho(s))\in\Fo$. Let us show that $\rho^*(X) = \X_{\Fo'}^{G/N}$. Take $x\in\rho^*(X)$ and suppose $x\notin\X_{\Fo'}$. Then, there exist $g\in G/N$ and $s\in S$ such that $(x(g), x(gs), s)\in\Fo'$. Because $x$ belongs to the push-forward, there exists $y\in X$ such that $x = y\circ\rho$. Then, because $y\in\Fix_A(N)$, $x(gs) = y(\rho(g)\rho(s))$. Therefore, $(y(\rho(g)), y(\rho(g)\rho(s)), \rho(s))\in\Fo$ appears in $y$, which is a contradiction.
	
	Conversely, take $x\in\X_{\Fo'}$ and define $y = x\circ\pi\in A^G$. Notice that for any $g\in G$ and $h\in N$,
	$$(h\cdot y)(g) = x(\pi(h^{-1}g))=x(\pi(g)) = y(g).$$
	Next, if $y\notin X$ because there is $g\in G$ and $s\in S$ such that $(y(g), y(g\rho(s)), \rho(s))\in\Fo$, then the pattern $(x(\pi(g)), x(\pi(g)s), s)\in\Fo'$ appears in $x$, which is a contradiction. Thus, $y\in X$. Finally, for any $g\in G/N$, $y\circ\rho(g) = x(\pi(\rho(g))) = x(g)$. Therefore, $x\in\rho^*(X)$.
\end{proof}

As the previous proof suggests, the push-forward and pull-back are complementary constructions in the following sense.

\begin{lemma}
\label{lem:pullpush}
	Let $X\subseteq\Fix_A(N)$ be a $G$-subshift and $Y$ a $G/N$-subshift. Then, $X = \pi^*(\rho^*(X))$ and $Y = \rho^*(\pi^*(Y))$.
\end{lemma}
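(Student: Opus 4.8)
The plan is to verify both identities directly by unwinding the definitions of pull-back and push-forward, using the two facts that a section $\rho\colon G/N\to G$ and the quotient map $\pi\colon G\to G/N$ satisfy $\pi\circ\rho = \mathrm{id}_{G/N}$, and that $\rho(\pi(g))$ and $g$ differ by an element of $N$ for every $g\in G$. For the first identity $X = \pi^*(\rho^*(X))$, I would start with an arbitrary $x\in X$. Since $X\subseteq\Fix_A(N)$, the push-forward is well-defined and $\rho^*(X) = \{x\circ\rho : x\in X\}$; applying the pull-back gives $\pi^*(\rho^*(X)) = \{x\circ\rho\circ\pi : x\in X\}$. So it suffices to show $x\circ\rho\circ\pi = x$ for every $x\in X$. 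Fix $g\in G$; then $\rho(\pi(g)) = g h$ for some $h\in N$, and because $x\in\Fix_A(N)$ we have $x(gh) = (h^{-1}\cdot x)(g) = x(g)$ (using $n\cdot x = x$ for all $n\in N$, in particular $n = h^{-1}$). Hence $(x\circ\rho\circ\pi)(g) = x(g)$, and since $g$ was arbitrary, $x\circ\rho\circ\pi = x$. This shows $X = \pi^*(\rho^*(X))$ as sets of configurations.

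For the second identity $Y = \rho^*(\pi^*(Y))$, I would first note that $\pi^*(Y) = \{y\circ\pi : y\in Y\}\subseteq\Fix_A(N)$: indeed each $y\circ\pi$ is constant on left cosets of $N$ since $\pi(ng) = \pi(g)$, so $\pi^*(Y)$ is contained in $\Fix_A(N)$ and its push-forward is defined. Then $\rho^*(\pi^*(Y)) = \{y\circ\pi\circ\rho : y\in Y\}$, and since $\pi\circ\rho = \mathrm{id}_{G/N}$, each such configuration equals $y$. Therefore $\rho^*(\pi^*(Y)) = Y$.

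I expect no serious obstacle here; the two statements are essentially bookkeeping once one has the two elementary facts about $\rho$ and $\pi$ in hand. The one point requiring a little care is the inclusion $\pi^*(Y)\subseteq\Fix_A(N)$, which is needed merely so that the expression $\rho^*(\pi^*(Y))$ is legitimate (the push-forward was only defined for subshifts inside $\Fix_A(N)$); this follows immediately from normality of $N$ and the computation $(n\cdot(y\circ\pi))(g) = (y\circ\pi)(n^{-1}g) = y(\pi(n^{-1}g)) = y(\pi(g)) = (y\circ\pi)(g)$. Similarly, in the first identity one should remark that $\rho^*(X)$ is a genuine $G/N$-subshift so that $\pi^*$ applies to it, which is guaranteed by the hypothesis $X\subseteq\Fix_A(N)$. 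With these remarks the proof reduces to the two displayed computations above.
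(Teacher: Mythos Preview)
Your proposal is correct and follows essentially the same approach as the paper: unwind the definitions, use $\pi\circ\rho=\mathrm{id}_{G/N}$ for the second identity, and use $\rho(\pi(g))\in gN$ together with $N$-invariance of $x$ for the first. One small slip to fix: the equality $x(gh)=(h^{-1}\cdot x)(g)$ is not literally true (the left action gives $(h^{-1}\cdot x)(g)=x(hg)$), but since $N$ is normal you may instead write $\rho(\pi(g))=h'g$ with $h'\in N$, or invoke $gN=Ng$, and the conclusion $x(\rho(\pi(g)))=x(g)$ goes through unchanged.
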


\begin{proof}
	We begin with $X\subseteq\Fix_A(N)$. For $x\in\pi^*(\rho^*(X))$, there exists $y\in X$ such that $x = y\circ\rho\circ\pi$. As $\rho(\pi(g))\in gN$ for all $g\in G$, and $y\in\Fix_A(N)$, we have that $x = y\in X$. Next, take $z\in X$ and define $y = z\circ\rho\circ\pi$. By definition, $y\in\pi^*(\rho^*(X))$. As before, because $\rho(\pi(g))\in gN$ for all $g\in G$, and $z\in\Fix_A(N)$, both configurations are the same, meaning $z\in\pi^*(\rho^*(X))$.  
	
	For the second statement, take $y\in\rho^*(\pi^*(Y))$. There exists $x\in Y$ such that $y = x\circ\rho\circ\pi$. Because $\rho(\pi(g)) = g$ for all $g\in G/N$, $x$ and $y$ are equal. Finally, take $y\in Y$ and define $z = y\circ\rho\circ\pi$, which by definition belongs to $\rho^*(\pi^*(Y))$. We conclude as before that $y = z\in\rho^*(\pi^*(Y))$.
\end{proof}

This Lemma allows us to show that all subshifts are the pull-forward of some $F$-subshift, for $F$ a free group.

\begin{proposition}
	Let $X$ be a $G$ subshift. Then, there exists a free group $F$ and a $F$-subshift $Y$ such that $\rho^*(Y) = X$.	
\end{proposition}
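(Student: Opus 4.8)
The plan is to realize $G$ as the quotient of a free group by the kernel of its canonical presentation map, lift $X$ by the pull-back, and then invoke the complementarity of pull-back and push-forward from Lemma~\ref{lem:pullpush}.

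First, I would fix a finite symmetric generating set $S$ of $G$, let $F=\F_S$ be the free group on $S$, and let $\pi:F\to G$ be the canonical surjective homomorphism extending the identity on $S$. Set $N=\ker\pi\trianglelefteq F$, so that $F/N\cong G$ canonically; under this identification $X$ becomes an $(F/N)$-subshift and $\pi$ becomes the quotient map $F\to F/N$. Note that $F$ is indeed a free group (of finite rank if $G$ is finitely generated, which is the standing assumption), so it will serve as the group claimed in the statement.

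Next, define $Y=\pi^*(X)\subseteq A^F$, the pull-back of $X$ along $\pi$. By the remark following the definition of the pull-back (and Lemma~\ref{lem:patterns_pull_back}), $Y$ is a non-empty $F$-subshift whenever $X$ is, and every configuration in $Y$ factors through $\pi$, hence is invariant under $N$; that is, $Y\subseteq\Fix_A(N)$. I would then apply the second identity of Lemma~\ref{lem:pullpush}, with the group $F$ in place of $G$ and $N$ in place of the normal subgroup: for the $(F/N)$-subshift $X$ we obtain $X=\rho^*(\pi^*(X))=\rho^*(Y)$, where $\rho:F/N\to F$ is any section. This is exactly the desired conclusion.

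The only point requiring care — and the closest thing to an obstacle — is bookkeeping around the hypotheses: one must check that nothing in Lemma~\ref{lem:pullpush} (or its proof) uses finite generation or finite presentation of the ambient group, so that it applies verbatim to $F$, and that the push-forward $\rho^*(Y)$ is meaningful even though $N=\ker\pi$ need not be finitely generated. On the latter point I would remark that finite generation of $N$ is used only in Lemma~\ref{lem:patterns_push_forward} to preserve the SFT property, which the present statement does not assert; the construction $\rho^*(Y)=\{y\circ\rho\mid y\in Y\}$ is a well-defined $(F/N)$-subshift for any normal $N$, since $Y\subseteq\Fix_A(N)$ already guarantees both independence of the chosen section and shift-invariance (exactly as in the lemmas proved for the push-forward). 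With this observed, the argument is complete.
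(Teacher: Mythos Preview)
Your proof is correct and follows essentially the same approach as the paper: take $F=\F_S$ for a generating set $S$ of $G$, set $Y=\pi^*(X)$, and apply the second identity of Lemma~\ref{lem:pullpush}. Your additional care in noting that finite generation of $N=\ker\pi$ is not needed here (only for preserving the SFT property in Lemma~\ref{lem:patterns_push_forward}) is a worthwhile clarification that the paper's terse proof leaves implicit.
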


\begin{proof}
	Let $S$ be a generating set for $G$ and $F = \F_S$ the free group with $S$ as a free generating set such that we have a quotient map $\pi:F\to G$. By Lemma \ref{lem:pullpush}, the push-back of $X$, $Y = \pi^*(X)$ is a $F$-subshift that verifies $\rho^*(Y) = X$.
\end{proof}

%
\section{Subgroup Realizability: How I Learned to Stop Worrying and Love Periodicity}
\label{sec:realizability}

We now move to the core of this article. How much control do we have over the stabilizers of an SFT? Could we replace the trivial subgroup in strongly aperiodic SFTs with any other subgroup? The aim of this section is to explore this question. We will see that there are both algebraic and computational restrictions to the realizability of subgroups as stabilizers.\\

Let us recall the definition.
\begin{definition}
	We say a family of subgroups $\G\subseteq\Sub(G)$ is \define{realizable} if there exists a non-empty $G$-SFT $X$ such that $\stab(X) = \G$. We say $H\in\Sub(G)$ is realizable if the singleton $\{H\}$ is realizable.
\end{definition}

\begin{question}
	Which subsets of $\Sub(G)$ are realizable?
\end{question}

First off, a simple cardinality argument shows that no group allows for all subsets of $\Sub(G)$ to be realizable: $\mathcal{P}(\Sub(G))$ is uncountable while the number of SFTs is countable. A natural starting point then is the realizability of a single subgroup. This question is non-trivial as the realization of the trivial subgroup is equivalent to finding a strongly aperiodic SFT. We also have a cardinality constraint on this \textit{a priori} simpler case, as there are groups whose space of subgroups is uncountable. Examples of such groups include those where their perfect kernel is a Cantor space, such as $\F_n$ (see~\cite{carderi2022space} and the references therein), or groups that admit allosteric actions such as surface groups or $\Z/2\Z\ast\Z/2\Z\ast\Z/2\Z\ast\Z/2\Z$ (see~\cite{joseph2014allosteric}).\\

Even if we restrict ourselves to groups with a countable amount of subgroups, such as $\Z^d$, the problem is non-trivial as shown in the next lemma.

\begin{lemma}
	\label{lem:OGPR}
	In $\Z^2$, the subgroup $p\Z\times\{0\}$ is not realizable. In particular, for any SFT $X\subseteq A^{\Z^2}$, if $p\Z\times\{0\}\in\stab(X)$, then there exists $0<q\leq |A|^p+1$ such that $p\Z\times q\Z\in\stab(X)$.
\end{lemma}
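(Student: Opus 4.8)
The plan is to exploit a pigeonhole argument on the rows of a configuration stabilized by $p\Z\times\{0\}$. Fix a non-empty SFT $X\subseteq A^{\Z^2}$ with $p\Z\times\{0\}\in\stab(X)$, and let $x\in X$ with $\stab(x)=p\Z\times\{0\}$ (or at least containing it). First I would observe that the vector $(p,0)$ being a period of $x$ means that the restriction of $x$ to each horizontal line $\Z\times\{k\}$ is a periodic bi-infinite word with period dividing $p$; in particular, the ``row'' $R_k := x|_{\Z\times\{k\}}$ is completely determined by the finite block $x|_{\{0,1,\dots,p-1\}\times\{k\}}\in A^p$. So each row is one of at most $|A|^p$ possible patterns.

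Next I would use pigeonhole on the rows indexed by $k=0,1,\dots,|A|^p$: among these $|A|^p+1$ rows there must be two indices $0\le k_1<k_2\le |A|^p$ with $R_{k_1}=R_{k_2}$, and hence $x|_{\{0,\dots,p-1\}\times\{k_1\}} = x|_{\{0,\dots,p-1\}\times\{k_2\}}$. Setting $q=k_2-k_1$, we then have $0<q\le |A|^p$, which is even slightly stronger than the claimed bound $q\le|A|^p+1$. The goal is to produce a configuration $x'\in X$ with $p\Z\times q\Z\subseteq\stab(x')$. The natural candidate is the vertically-periodic configuration obtained by taking the horizontal strip of $x$ between rows $k_1$ and $k_2-1$ and repeating it periodically with period $q$ in the vertical direction: define $x'(i,j) = x(i, k_1 + (j-k_1 \bmod q))$. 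By construction $x'$ is invariant under $(0,q)$ and under $(p,0)$, so $p\Z\times q\Z\subseteq\stab(x')$.

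The main obstacle, and the step requiring the finite-type hypothesis, is showing $x'\in X$. Since $X$ is an SFT, it suffices to check that $x'$ contains no forbidden pattern; by the conjugacy to a nearest neighbor SFT (or by choosing the window size appropriately) it is enough to verify that every sufficiently small ``horizontal seam'' pattern straddling a repetition boundary already appears in $x$. But the seam between a copy ending at row $k_2-1$ and the next copy starting (a translate of) row $k_1$ is locally identical to the seam in $x$ between row $k_2-1$ and row $k_2$, precisely because $R_{k_2}=R_{k_1}$ and horizontal periodicity makes the match consistent for all columns simultaneously. Thus every local pattern of $x'$ occurs in $x$, so $x'$ avoids all forbidden patterns of $X$ and $x'\in X$. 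Finally, since $\stab(x')\supseteq p\Z\times q\Z$ and $p\Z\times q\Z$ has finite index in $\Z^2$, we get $p\Z\times q\Z\in\stab(X)$ after possibly replacing $q$ by a multiple to land exactly on a subgroup in $\stab(X)$ — but in fact $\stab(x')$ is itself some subgroup containing $p\Z\times q\Z$, and one checks it must contain $p\Z\times q'\Z$ for $q'$ a divisor or multiple as needed; the cleanest statement is simply that $\stab(x')\in\stab(X)$ and it contains $p\Z\times q\Z$, which already shows $p\Z\times\{0\}$ cannot be the unique stabilizer, proving non-realizability. For the quantitative addendum one takes $q$ minimal, giving $0<q\le|A|^p<|A|^p+1$.
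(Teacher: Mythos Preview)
Your proposal is correct and follows essentially the same approach as the paper: reduce to a nearest-neighbor SFT, apply pigeonhole to the width-$p$ row patterns to find a vertical repetition at distance $q\le |A|^p$, and tile $\Z^2$ periodically with the resulting $p\times q$ block to obtain a configuration in $X$ with finite-index stabilizer. Your hesitation at the end about whether $\stab(x')$ equals $p\Z\times q\Z$ exactly is legitimate (the paper simply asserts equality without comment), but as you note this does not affect the non-realizability conclusion.
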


\begin{proof}
	 Take $X$ a nearest neighbor $\Z^2$-SFT and $x\in X$ such that $\stab(x) = p\Z\times\{0\}$. Denote $w^k = x|_{[0,p-1]\times\{k\}}$. Because of $x$'s periodicity, for all $k\in\Z$, the restriction $x|_{\Z\times\{k\}}$ is the periodic configuration $(w^k)^{\infty}$. Now, by the pigeonhole principle, there exist $k\geq 0$ and $0<q \leq |A|^{p+1}$ such that $w^k = w^{k+q}$. Let $sq:[0,p-1]\times[0,q-1]\to A$ be the rectangular pattern defined by $sq(i,j) = (w^{k+j})_{i}$. Define $y\in A^{\Z^2}$ by $y(i,j) = sq(i\!\mod p, j\!\mod q)$. By construction, $y$ belongs to $X$ and its stabilizer is $p\Z\times q\Z$ and  belongs to $X$.
\end{proof}

We generalize this phenomenon to find more examples of non-realizable subgroups in the subsequent sections.

\subsection{General properties}

We saw in Section~\ref{sec:symb} that every SFT is conjugate to a nearest neighbor SFT. This result allows us to restrict the scope of the SFTs we consider, as stabilizers are preserved under conjugacy.

\begin{lemma}
	\label{lem:tconj}
	Let $X$ be topologically conjugate to $Y$. Then, $\stab(X) = \stab(Y)$.
\end{lemma}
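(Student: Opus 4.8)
The plan is to show that a topological conjugacy $\phi \colon X \to Y$ carries the stabilizer of each configuration isomorphically onto the stabilizer of its image, from which the equality of the two \emph{sets} of stabilizers follows immediately. The key property we need from the definition of conjugacy is that $\phi$ is $G$-equivariant: $\phi(g \cdot x) = g \cdot \phi(x)$ for all $g \in G$ and $x \in X$. First I would fix $x \in X$ and set $y = \phi(x) \in Y$. For any $g \in \stab(x)$ we have $g \cdot y = g \cdot \phi(x) = \phi(g \cdot x) = \phi(x) = y$, so $g \in \stab(y)$; this gives $\stab(x) \subseteq \stab(\phi(x))$, and in particular $\stab(x) \subseteq \stab(Y)$ regarded as an element of the set of stabilizers, i.e.\ $\stab(X) \subseteq \stab(Y)$.

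For the reverse inclusion I would use that $\phi$ is a bijection (indeed a conjugacy), so $\phi^{-1} \colon Y \to X$ is also a continuous $G$-equivariant bijection, hence a conjugacy. Applying the argument of the previous paragraph to $\phi^{-1}$ yields $\stab(y) \subseteq \stab(\phi^{-1}(y))$ for every $y \in Y$. Taking $y = \phi(x)$ and noting $\phi^{-1}(\phi(x)) = x$, we get $\stab(\phi(x)) \subseteq \stab(x)$, so in fact $\stab(x) = \stab(\phi(x))$ for every $x \in X$. Since $\phi$ is onto $Y$, every element of $\stab(Y)$ is of the form $\stab(y) = \stab(\phi(\phi^{-1}(y))) = \stab(\phi^{-1}(y))$ with $\phi^{-1}(y) \in X$, hence lies in $\stab(X)$; combined with the first paragraph this gives $\stab(X) = \stab(Y)$.

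This argument is entirely routine — there is no real obstacle. The only point worth stating carefully is that one genuinely needs the bijectivity (or at least surjectivity, plus injectivity to run the $\phi^{-1}$ direction) of the conjugacy, not merely equivariance and continuity: an equivariant map alone only gives the inclusion $\stab(x) \subseteq \stab(\phi(x))$, which is why the pointwise equality $\stab(x) = \stab(\phi(x))$ — and not just $\stab(X) = \stab(Y)$ as sets — follows from running the symmetric argument for $\phi^{-1}$. Continuity plays no role in the stabilizer computation itself; it is part of the definition of conjugacy but is not invoked. I would present the proof in three or four lines, essentially displaying the equivariance chain $g \cdot \phi(x) = \phi(g \cdot x) = \phi(x)$ and its inverse analogue.
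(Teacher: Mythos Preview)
Your argument is correct and is exactly the routine equivariance computation one would expect; the paper in fact states this lemma without proof, treating it as immediate, so your three-or-four-line version is entirely appropriate.
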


We can also quickly rule out the realizability of non-normal subgroups.

\begin{lemma}
	\label{lem:no_normal}
	Let $X$ be a non-empty subshift and $H\in\Sub(G)$ a subgroup such that $H\in\stab(X)$. Then, for all $g\in G$, $gHg^{-1}\in\stab(X)$.
	In particular, subgroups that are not normal are not realizable.
\end{lemma}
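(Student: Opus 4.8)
The plan is to exploit the equivariance of the shift action directly. Suppose $X$ is a non-empty $G$-subshift and $H \in \stab(X)$, so there is a configuration $x \in X$ with $H \subseteq \stab(x)$ (in fact $\stab(x) = H$ if we only know $H\in\stab(X)$, but containment is all we need). Fix $g \in G$ and consider the configuration $g \cdot x$. Since $X$ is $G$-invariant, $g \cdot x \in X$. The key computation is to show $gHg^{-1} \subseteq \stab(g\cdot x)$: for any $h \in H$ and any $k \in G$,
\[
(ghg^{-1}) \cdot (g \cdot x) = (ghg^{-1}g) \cdot x = (gh) \cdot x = g \cdot (h \cdot x) = g \cdot x,
\]
using that $h \in H \subseteq \stab(x)$. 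Hence $gHg^{-1} \subseteq \stab(g\cdot x)$.

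To get $gHg^{-1} \in \stab(X)$ on the nose (rather than merely a subgroup of some stabilizer), I would instead argue with an $x$ such that $\stab(x)$ is exactly whatever witnesses $H$; but for the statement as phrased — "$H \in \stab(X)$ implies $gHg^{-1} \in \stab(X)$" — I should take $x$ with $\stab(x) = H$. Then the computation above shows $gHg^{-1} \subseteq \stab(g \cdot x)$, and the reverse inclusion follows symmetrically by applying the same argument to $g^{-1}$ and $g \cdot x$: if $k \in \stab(g \cdot x)$ then $g^{-1}kg \in \stab(x) = H$, so $k \in gHg^{-1}$. Therefore $\stab(g \cdot x) = gHg^{-1}$, which gives $gHg^{-1} \in \stab(X)$.

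For the "in particular" clause: if $H$ is realizable, then $\stab(X) = \{H\}$ for some non-empty SFT $X$, so by the first part $gHg^{-1} \in \stab(X) = \{H\}$ for every $g \in G$, forcing $gHg^{-1} = H$ for all $g$, i.e. $H \trianglelefteq G$. Contrapositively, a non-normal subgroup cannot be realizable.

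There is essentially no obstacle here — the whole argument is the one-line conjugation identity $(ghg^{-1})(gx) = g(hx) = gx$ plus its symmetric converse. The only point requiring a little care is bookkeeping the difference between "$H$ is contained in some stabilizer" and "$H$ is itself a stabilizer"; making sure to pick the configuration whose stabilizer is exactly $H$ and then running the symmetric argument to pin down $\stab(g\cdot x)$ precisely is what makes the membership $gHg^{-1}\in\stab(X)$ (as opposed to a weaker statement) come out cleanly.
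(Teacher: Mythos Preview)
Your proof is correct and follows essentially the same approach as the paper's: pick $x$ with $\stab(x)=H$, use the conjugation identity to show $\stab(g\cdot x)=gHg^{-1}$, and conclude. The paper states $\stab(g\cdot x)=gHg^{-1}$ in one line without spelling out both inclusions, but your more explicit two-sided argument is the same computation.
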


\begin{proof}
	Let $x\in X$ be a configuration such that $\stab(x) = H$. Then, for any $g\in G$
	$$\stab(g\cdot x) = gHg^{-1}.$$
	Thus, $gHg^{-1}\in\stab(X)$. If $H$ is not normal, there exist $g_0\in G$ and $h\in H$ such that $g_0hg_0^{-1}\notin H$. Suppose $H$ were realizable by $X$. Then, for any $x\in X$, $\stab(g_0\cdot x)\neq H$, which is a contradiction.
\end{proof}

\begin{remark}
	For any group $G$, there is a natural action of $G$ on its space of subgroup by conjugation, that is, $g\cdot H = gHg^{-1}$. The previous lemma shows that $\stab:X\to\Sub(G)$ is a $G$-invariant map. Furthermore, if we take an alphabet $A$ of size at least 2, $\stab(A^G) = \Sub(G)$. Let $a,b\in A$ be two distinct letters. For a subgroup $H\in\Sub(G)$ we can define $x\in X$ by $x(h) = a$ if $h\in H$, and $b$ otherwise. Then, $\stab(x) = H$.
\end{remark}

In contrast to non-normal subgroups, finite index normal subgroups are always realizable.

\begin{lemma}
	Finite index normal subgroups are always realizable.
\end{lemma}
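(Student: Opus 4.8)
The plan is to construct, for a finite index normal subgroup $N\trianglelefteq G$, an explicit SFT whose configurations all have stabilizer exactly $N$. The natural candidate is a subshift built on the quotient $G/N$, which is a finite group of order $d = [G:N]$. First I would take the alphabet $A = G/N$ itself (which is finite, since $N$ has finite index), and consider the configuration $x_0 \in A^G$ defined by $x_0(g) = gN = \pi(g)$, where $\pi\colon G\to G/N$ is the quotient map. A direct computation shows $g\cdot x_0 = x_0$ if and only if $g\in N$: indeed $(g\cdot x_0)(h) = x_0(g^{-1}h) = g^{-1}hN$, which equals $hN$ for all $h$ precisely when $g\in N$. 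So $\stab(x_0) = N$.

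The remaining work is to pin down an SFT whose entire set of configurations consists of shifts of $x_0$ (and possibly relabellings), so that every stabilizer is a conjugate of $N$ — but $N$ is normal, so every conjugate is $N$ itself, giving $\stab(X) = \{N\}$. The cleanest way is to use the machinery already in the excerpt: the full shift $A^{G/N}$ over the finite group $G/N$ is trivially an SFT (in fact $G/N$ is finite, so $A^{G/N}$ is finite), and more to the point one wants the $G/N$-subshift consisting of a single ``diagonal'' configuration that records the group element. Actually the slickest route is the pull-back construction: let $Y\subseteq A^{G/N}$ be the $G/N$-subshift consisting of the orbit of the configuration $\kappa\in A^{G/N}$ with $\kappa(q) = q$ for all $q\in G/N$; since $G/N$ acts freely and transitively on itself, $Y$ is exactly this orbit, it is a (finite, hence finite type) subshift, and every configuration in $Y$ has trivial stabilizer in $G/N$. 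Then set $X = \pi^*(Y)$. By Lemma~\ref{lem:patterns_pull_back}, since $N$ is finitely generated (it is finite index in a finitely generated group, hence finitely generated), $\pi^*(Y)$ is an SFT. By Proposition~\ref{prop:quotient} (the stabilizer description of the pull-back, referenced in the excerpt), the stabilizer of $x\circ\pi$ for $x\in Y$ is $\pi^{-1}(\stab(x)) = \pi^{-1}(\{1_{G/N}\}) = N$. Hence $\stab(X) = \{N\}$ and $N$ is realizable.

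Alternatively, to keep the argument self-contained without invoking Proposition~\ref{prop:quotient}, I would argue directly: a configuration $x\in X$ is a shift $g\cdot x_0$ of $x_0$ defined above, possibly composed with a bijection of the finite alphabet coming from translation in $G/N$; in all cases $\stab(x) = g\,\stab(x_0)\,g^{-1} = gNg^{-1} = N$ by normality, using Lemma~\ref{lem:no_normal}'s computation $\stab(g\cdot x) = g\stab(x)g^{-1}$. One must also check $X$ is nonempty (clear, it contains $x_0$) and that the nearest-neighbor rules defining $X$ genuinely force a configuration to look like $x_0$ up to the shift: the forbidden patterns are, for each generator $s$ of $G$, all nearest-neighbor patterns $(a,b,s)$ with $b \neq a\cdot(sN)$ read in $G/N$, i.e. one demands $x(gs) = x(g)\cdot \pi(s)$. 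A short induction on word length, using connectedness of the Cayley graph, shows any such $x$ is determined by $x(1_G)$ and equals a translate of $x_0$.

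I expect the main obstacle to be purely bookkeeping: verifying that the finitely many nearest-neighbor constraints are consistent and rigid enough to force exactly the orbit of $x_0$ — in particular checking that the constraint coming from a generator $s$ and from $s^{-1}$ agree, and that relations in $G$ impose no new constraint (they automatically hold because $x(g)\cdot\pi(g^{-1}h)$ is well-defined via $\pi$). None of this is deep; the conceptual content is just that $G/N$ finite lets you encode the quotient map as a finite-type local rule, and normality of $N$ collapses the conjugacy orbit of stabilizers to the singleton $\{N\}$.
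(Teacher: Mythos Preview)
Your proof is correct and follows essentially the same approach as the paper: both construct a strongly aperiodic SFT on the finite quotient $G/N$ and pull it back via $\pi^*$, invoking Lemma~\ref{lem:patterns_pull_back} to ensure the result is an SFT. The only cosmetic difference is that the paper takes as its $G/N$-SFT the set of \emph{all} bijections $G/N\to\{0,\dots,n-1\}$ rather than just the orbit of the identity map, but either choice yields a strongly aperiodic $G/N$-SFT and the rest of the argument is identical.
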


\begin{proof}
    Let $N\trianglelefteq G$ be a finite index normal subgroup, of index $n$. Consider the SFT $X\subseteq \{0,...,n-1\}^{G/N}$, over the finite subgroup $G/N$, comprised of all configurations where there is a unique number for each element of $G/N$. Then, the pull-back $\pi^*(X)\subseteq \{0,...,n-1\}^{G}$ realizes $N$, and by Lemma~\ref{lem:patterns_pull_back} is an SFT.
\end{proof}

Through operations between subshifts we can combine realizable subsets obtain new ones. For this purpose, we define the subgroup-wise intersection of two subsets as

$$\G_1\sqcap\G_2 = \{H_1\cap H_2 \mid H_1\in\G_1, H_2\in\G_2\}.$$

\begin{proposition}
\label{prop:props}
	Let $I$ be a finite set of indices and $(\G_{i})_i$ realizable subsets. Then,
	\begin{enumerate}
		\item $\bigcup_{i\in I}\G_i$ is realizable,
		\item $\bigsqcap_{i\in I}\G_i$ is realizable.
	\end{enumerate}
\end{proposition}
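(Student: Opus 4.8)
The plan is to realize each operation by an appropriate operation on subshifts, using the fact (Lemma~\ref{lem:tconj}) that stabilizers are conjugacy invariants so that we may work with concrete SFTs.

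\medskip

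\noindent\textbf{Disjoint union for (1).} For each $i\in I$ let $X_i\subseteq A_i^G$ be a non-empty $G$-SFT with $\stab(X_i)=\G_i$; by relabelling alphabets we may assume the $A_i$ are pairwise disjoint. Set $A=\bigsqcup_{i\in I}A_i$ and let $X=\bigcup_{i\in I}X_i\subseteq A^G$. This is a $G$-subshift, and it is an SFT: take $\Fo=\bigcup_i\Fo_i$ together with, for every generator $s\in S$ and every pair of letters $a\in A_i$, $b\in A_j$ with $i\neq j$, the nearest neighbour pattern $(a,b,s)$ forbidding two letters from different alphabets to be adjacent. Since $G$ is generated by $S$ and the Cayley graph is connected, any configuration avoiding all these patterns is entirely within a single $A_i$, hence $\X_\Fo=\bigcup_iX_i=X$. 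Finally $\stab(X)=\bigcup_i\stab(X_i)=\bigcup_i\G_i$, because every configuration of $X$ lies in exactly one $X_i$ and conversely. This handles (1).

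\medskip

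\noindent\textbf{Product for (2).} For (2), again take non-empty $G$-SFTs $X_i\subseteq A_i^G$ with $\stab(X_i)=\G_i$, and consider the product SFT $Y=\prod_{i\in I}X_i\subseteq\bigl(\prod_{i\in I}A_i\bigr)^G$, where a configuration $y$ is identified with the tuple $(y_i)_{i\in I}$ of its coordinate projections $y_i\in A_i^G$, and $y\in Y$ iff $y_i\in X_i$ for all $i$. This is an SFT over the product alphabet: impose on each coordinate the forbidden patterns of $\Fo_i$, read off that coordinate. It is non-empty since each $X_i$ is. The key computation is that for a configuration $y=(y_i)_i$ one has $g\cdot y=y$ iff $g\cdot y_i=y_i$ for every $i$, so
\[
\stab(y)=\bigcap_{i\in I}\stab(y_i).
\]
Since each $y_i$ ranges freely over $X_i$ as $y$ ranges over $Y$ (the coordinates are independent in a direct product), we get $\stab(Y)=\{\bigcap_i H_i \mid H_i\in\G_i\}=\bigsqcap_{i\in I}\G_i$. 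This proves (2).

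\medskip

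\noindent\textbf{Main obstacle.} Both constructions are routine; the only point requiring a little care is the verification that the set of stabilizers of the product is \emph{exactly} $\bigsqcap_i\G_i$ and not merely contained in it. Containment ($\subseteq$) is immediate from the displayed identity; the reverse inclusion uses that for any choice of $x_i\in X_i$ with $\stab(x_i)=H_i$ the tuple $(x_i)_i$ is a legitimate configuration of the product SFT, which is exactly what independence of coordinates in a direct product gives. Finiteness of $I$ is used to keep the alphabets and forbidden-pattern sets finite, so that $X$ and $Y$ remain SFTs.
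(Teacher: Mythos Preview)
Your proof is correct and follows essentially the same approach as the paper: disjoint union of SFTs over disjoint alphabets for (1), and direct product for (2). You supply more detail than the paper does (explicit forbidden patterns for the union, and an explicit discussion of both inclusions for $\stab(Y)=\bigsqcap_i\G_i$), but the underlying constructions and arguments are identical.
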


\begin{proof}
	We will prove the case of $|I| = 2$, the general case follows directly. Let $X_i$ be the SFT that realizes $\G_i$ for $i=1,2$.
	\begin{enumerate}
		\item Assuming the alphabets of $X_1$ and $X_2$ are disjoint, take $Y = X_1\cup X_2$. This new subshift is an SFT as the finite union of SFTs is an SFT. Every configuration of both $X_1$ and $X_2$ is contained in $Y$, and therefore $\G_1\cup\G_2\subseteq\stab(Y)$. It is straightforward that $\stab(y)\in\G_1\cup\G_2$ for all $y\in Y$.
		
		\item Define $Y = X_1 \times X_2$. Once again, $Y$ is an SFT as the product of two SFTs in a SFT. Take a subgroup $H = H_1\cap H_2\in\G_1\sqcap\G_2$ and $x_i\in X_i$ such that $\stab(x_i)=H_i$ for $i\in\{1,2\}$. Take $x = (x_1,x_2)\in Y$ and $g\in\stab(x)$. By definition, $g$ must stabilize both $x_1$ and $x_2$. Thus, $g\in\stab(x_1)\cap\stab(x_2)$. Conversely, if an element $g\in G$ stabilizes both $x_1$ and $x_2$, it stabilizes $x$. Therefore, $\stab(x) = \stab(x_1)\cap\stab(x_2)$. An analogous procedure shows that the stabilizer of any $x\in Y$ is the intersection of the stabilizers of its coordinates.
	\end{enumerate}\vspace{-10pt}\end{proof}
Particular instances of this lemma have already appeared in the literature with the purpose of finding strongly aperiodic SFTs. For example in~\cite[Proposition~3.4]{cohen2022strongly} and~\cite[Proposition 2.3]{jeandel2015aperiodic}.

\subsection{Quotients and realizability}

Let $N\trianglelefteq G$ be a non-trivial finitely generated normal subgroup. We want to study how realizable families in the quotient $G/N$ influence realizable families in $G$, and vice-versa.\\

Given a section $\rho:G/N\to G$, a set $S$ of generators for $G/N$, and $T$ a set of generators of $N$, the set $\rho(S)\cup T$ is a finite generating set for $G$. Recall from Section~\ref{sec:canonical} that starting from a subshift $X\subseteq A^{G/N}$ we can define its \define{pull-back} $\pi^*(X)\subseteq A^G$, where $\pi:G\to G/N$ is the quotient map. We will use the pull-back shift to go from realizability in the quotient to realizability in the starting group. Furthermore, when $X$ is a nearest neighbor SFT over $S$ the pull-back is a nearest-neighbor SFT over $\rho(S)\cup T$ (see Lemma~\ref{lem:patterns_pull_back}).\\

We make extensive use of the fact that for all $k_1,k_2,k\in G/N$ we have that $\rho(k_1k_2) = \rho(k_1)\rho(k_2)h$ and $\rho(k^{-1}) = \rho(k)^{-1}h'$ for some $h,h'\in N$.

\begin{proposition}
	\label{prop:quotient}
	Let $N\trianglelefteq G$ be a finitely generated normal subgroup. Let $X$ be a $G/N$-subshift, and $\pi^*(X)$ its pull-back. Then, $\stab(\pi^*(X)) = \rho(\stab(X))N$, where $\rho:G/N\to G$ is any section.
	In particular, if $\G$ is realizable in $G/N$, then $\rho(\G)N = \{\rho(H)N \mid H\in \G\}$ is realizable in $G$. 
\end{proposition}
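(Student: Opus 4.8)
The plan is to unwind the definition of the pull-back configuration and track how the shift action interacts with the quotient map $\pi$. For a configuration $y = x\circ\pi \in \pi^*(X)$, an element $g\in G$ fixes $y$ precisely when $x(\pi(g^{-1}h')) = x(\pi(h'))$ for all $h'\in G$, which rewrites as $(\pi(g)\cdot x)(k) = x(k)$ for all $k\in G/N$, i.e. $\pi(g)\in\stab(x)$. So the key identity to establish is $\stab(x\circ\pi) = \pi^{-1}(\stab(x))$ for every $x\in X$. Since $\pi^{-1}(\stab(x)) = \rho(\stab(x))N$ (because $\pi$ is surjective with kernel $N$, so the preimage of a subgroup $H\le G/N$ is the union of cosets $\rho(h)N$ over $h\in H$, and this union is a subgroup), taking the union over all $x\in X$ gives $\stab(\pi^*(X)) = \{\rho(H)N \mid H\in\stab(X)\} = \rho(\stab(X))N$, using that $\pi^*$ is a bijection onto $\pi^*(X)$ so every configuration of $\pi^*(X)$ is of the form $x\circ\pi$.

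First I would verify carefully the equivalence $g\cdot(x\circ\pi) = x\circ\pi \iff \pi(g)\cdot x = x$. The forward direction: if $g\cdot(x\circ\pi)=x\circ\pi$, then for any $k\in G/N$ pick $h'\in G$ with $\pi(h')=k$; evaluating at $g h'$ gives $x(\pi(h')) = (x\circ\pi)(gh') \cdot$-shifted appropriately — concretely $(g\cdot(x\circ\pi))(gh') = (x\circ\pi)(h') = x(k)$ while $(x\circ\pi)(gh') = x(\pi(g)k)$, so equality at every point $gh'$ (which ranges over all of $G$) forces $x(\pi(g)k) = x(k)$, hence $\pi(g)\cdot x = x$. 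The reverse direction is the same computation read backwards. Then I would note that $\{g : \pi(g)\in\stab(x)\} = \pi^{-1}(\stab(x))$ by definition, and identify this preimage with $\rho(\stab(x))N$: every element of $\pi^{-1}(\stab(x))$ is $\rho(\pi(g))n$ for a unique $n\in N$ with $\pi(g)\in\stab(x)$, and conversely $\rho(h)n$ projects to $h\in\stab(x)$, using the stated facts that $\rho(k_1k_2) = \rho(k_1)\rho(k_2)h$ and that $N$ is normal.

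For the ``in particular'' clause, if $X$ realizes $\G$ in $G/N$, i.e. $\stab(X) = \G$ and $X\ne\emptyset$, then $\pi^*(X)$ is a non-empty $G$-SFT (non-empty by the remark following the definition of pull-back, and an SFT by Lemma~\ref{lem:patterns_pull_back} since $N$ is finitely generated — here I should make sure $X$ may be taken to be an SFT, which it is since realizability is witnessed by an SFT) with $\stab(\pi^*(X)) = \rho(\G)N$, so $\rho(\G)N$ is realizable in $G$.

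I expect the main obstacle to be purely bookkeeping: making the ``unique $n\in N$'' decomposition $g = \rho(\pi(g))n$ and the direction-reversing computation fully rigorous without circular appeals, and confirming that $\rho(\stab(x))N$ does not depend on the choice of section $\rho$ (it doesn't, being equal to the section-free object $\pi^{-1}(\stab(x))$). There is no serious mathematical difficulty — the content is entirely that $\pi^*$ intertwines the $G$-action on $\pi^*(X)$ with the $G$-action on $X$ pulled back along $\pi$, so stabilizers pull back along $\pi$ as well.
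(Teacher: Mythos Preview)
Your proposal is correct and follows essentially the same approach as the paper: both establish that $\stab(x\circ\pi) = \pi^{-1}(\stab(x))$ via the direct computation that $g\cdot(x\circ\pi) = x\circ\pi$ iff $\pi(g)\cdot x = x$, then identify this preimage with $\rho(\stab(x))N$, and invoke Lemma~\ref{lem:patterns_pull_back} for the SFT clause. Your framing through $\pi^{-1}$ is slightly more streamlined than the paper's explicit manipulation of the decomposition $g=\rho(k)h$, but the content is identical.
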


\begin{proof}
	Fix a section $\rho:G/N\to G$, take $y\in \pi^*(X)$ and define $x_g = y_{\rho(g)}$ for $g\in G/N$. By Lemma \ref{lem:pullpush}, $x\in X$. Take $g\in\stab(y)$, with $k\in G/H$ and $h\in N$ such that $g=\rho(k)h$. For all $k'\in G/N$,
	\begin{align*}
		x(k') = y(\rho(k')) &= (g\cdot y)(\rho(k')) = y(h^{-1}\rho(k)^{-1}\rho(k'))\\
		&= y(h'\rho(k^{-1}k)) \ \text{ for some } h'\in N\\
		&= y(\rho(k^{-1}k')) = x(k^{-1}k')\\
		&= (k\cdot x)(k').
	\end{align*}
	Therefore, $k\in\stab(x)$ and $\stab(y)\subseteq \rho(\stab(x))N$. Conversely, if $g = \rho(k)h\in \rho(\stab(x))N$, for any $k'\in G/N$ and $h'\in N$, 
	\begin{align*}
		(g\cdot y)(\rho(k')h') &= y(h^{-1}\rho(k)^{-1}\rho(k')h')\\
		&= y(h\rho(k^{-1}k)) \ \text{ for some } h\in N\\
		&= y(\rho(k^{-1}k')) = x(k^{-1}k')\\
		&= x(k') = y(\rho(k'))\\
		&= y(\rho(k')h').
	\end{align*}
	Thus, $\stab(y) = \rho(\stab(x))N$. This, in turn, implies that $\stab(\pi^*(X))\subseteq \rho(\G)N$. To see that they are equal, given $x\in X$ we define $y(\rho(k)h) = x(k)$ for all $k\in G/N$ and $h\in N$. Retracing the steps above we can confirm $\rho(\G)N = \stab(\pi^*(X))$.
	
	Finally, if $X\subseteq A^{G/N}$ is a non-empty SFT that realizes $\G\subseteq\Sub(G/N)$; by Lemma \ref{lem:patterns_pull_back} we know $\pi^*(X)$ is a non-empty SFT and realizes $\rho(\G)N$.
\end{proof}

We can also state restrictions in the other direction by making use of the \define{push-forward} subshift (see Section~\ref{subsec:push_forward}).

\begin{proposition}
	\label{prop:vuelta}
	Let $N\trianglelefteq G$ be a finitely generated normal subgroup. Let $X\subseteq \Fix_A(N)$ be a $G$-SFT and $\rho^*(X)$ its push-forward, for any section $\rho:G/N\to G$. Then, $\stab(\rho^*(X)) = \stab(X)/N$. In particular, if $\G$ is realizable in $G$ such that $N\subseteq \bigcap_{H\in\G}H$, then $\G/N = \{K/N \mid K\in \G\}$ is realizable in $G/N$. 
\end{proposition}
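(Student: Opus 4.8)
The plan is to mirror the structure of the proof of Proposition~\ref{prop:quotient}, but now working in the opposite direction via the push-forward. Fix a section $\rho:G/N\to G$. Since $X\subseteq\Fix_A(N)$, we know by Lemma~\ref{lem:pullpush} that $X = \pi^*(\rho^*(X))$, so the push-forward loses no information and we may freely pass back and forth between a configuration $x\in X$ and its image $\bar{x} = x\circ\rho\in\rho^*(X)$. The first step is to verify that $N\leq\stab(x)$ for every $x\in X$ (this is exactly the remark that $X\subseteq\Fix_A(N)$ iff $N\leq\ker(X)$), so that $\stab(x)/N$ is a well-defined subgroup of $G/N$; this is what makes the statement sensible.

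The core computation is the identity $\stab(\bar{x}) = \stab(x)/N$ for each $x\in X$. For the inclusion $\stab(x)/N\subseteq\stab(\bar x)$: take $g\in\stab(x)$ and let $k = \pi(g)\in G/N$; for any $k'\in G/N$ one computes $(k\cdot\bar x)(k') = \bar x(k^{-1}k') = x(\rho(k^{-1}k'))$, and since $\rho(k^{-1}k')\in k^{-1}k'N = g^{-1}\rho(k')N$ and $x$ is $N$-invariant and $g$-invariant, this equals $x(g^{-1}\rho(k')) = x(\rho(k')) = \bar x(k')$. Hence $k\in\stab(\bar x)$. For the reverse inclusion $\stab(\bar x)\subseteq\stab(x)/N$: take $k\in\stab(\bar x)$ and pick any lift $g = \rho(k)$; using $x = \bar x\circ\pi$ (valid because $x$ is $N$-invariant, as in Lemma~\ref{lem:pullpush}) together with the coset-arithmetic facts $\rho(k^{-1}k') = \rho(k)^{-1}\rho(k')h$ for some $h\in N$, one checks $(g\cdot x)(\rho(k')h') = x(\rho(k')h')$ for all $k'\in G/N$, $h'\in N$, which covers all of $G$ since every element of $G$ is of the form $\rho(k')h'$. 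Therefore $g\in\stab(x)$, i.e. $k\in\stab(x)/N$. Taking the union over $x\in X$ — and noting, via Lemma~\ref{lem:pullpush}, that every configuration of $\rho^*(X)$ is $\bar x$ for some $x\in X$ — gives $\stab(\rho^*(X)) = \{\stab(x)/N : x\in X\} = \stab(X)/N$.

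For the final ``in particular'' clause: if $\G$ is realizable in $G$ by a non-empty $G$-SFT $Y$ with $N\subseteq\bigcap_{H\in\G}H = \ker(Y)$, then $Y\subseteq\Fix_A(N)$ by the remark preceding the definition of push-forward, so $\rho^*(Y)$ is defined, is non-empty, and by Lemma~\ref{lem:patterns_push_forward} is a $G/N$-SFT (after first conjugating $Y$ to a nearest-neighbor SFT over a generating set of the form $\rho(S)\cup T$, which is legitimate since stabilizers are conjugacy-invariant by Lemma~\ref{lem:tconj}); by the identity just proved it realizes $\stab(Y)/N = \G/N$. The main obstacle I anticipate is purely bookkeeping: keeping the repeated ``$= \ldots h$ for some $h\in N$'' substitutions honest and making sure the $N$-invariance of $x$ is invoked at exactly the right places so that the ambiguity in choosing lifts $\rho(k)$ genuinely washes out. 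There is no deep difficulty here — the content is that $\Fix_A(N)\cong A^{G/N}$ as a conjugacy, and push-forward is just this conjugacy followed by reading off stabilizers.
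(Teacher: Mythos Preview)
Your proposal is correct and follows essentially the same approach as the paper: both establish the pointwise identity $\stab(x\circ\rho) = \stab(x)/N$ via the coset-arithmetic substitutions $\rho(k^{-1}k') = \rho(k)^{-1}\rho(k')h$ combined with $N$-invariance, then conclude the ``in particular'' clause by conjugating to a nearest-neighbor SFT (Lemma~\ref{lem:tconj}) and invoking Lemma~\ref{lem:patterns_push_forward}. Your write-up is in fact slightly more careful than the paper's in explicitly noting why $\stab(x)/N$ is well-defined and in invoking Lemma~\ref{lem:pullpush} to justify $x = \bar x\circ\pi$.
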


\begin{proof}
	For any configuration $y\in\rho^*(X)$, there exists $x\in X$ such that $y = x\circ\rho$. Let us now show that $\stab(y) = \stab(x)/N$. Indeed, given $k\in\stab(y)$, for any $k'\in G/N$ and $h\in N$ we have that
	\begin{align*}
		(\rho(k)\cdot x)(\rho(k')h) &= x(\rho(k)^{-1}\rho(k')h)\\
		&= x(h'\rho(k^{-1}k)) \ \text{ for some } h'\in N\\
		&= x(\rho(k^{-1}k')) = y(k^{-1}k')\\
		&= y(k') = x(\rho(k'))\\
		&= x(\rho(k')h).
	\end{align*}
	In other words, $\rho(k)\in\stab(x)$ and consequently $k\in\stab(x)/N$. Now, let $k\in\stab(x)/N$ (equivalently $\rho(k)\in\stab(x)$). For any $k'\in G/N$ we have
	\begin{align*}
		(k\cdot y)(k') &= y(k^{-1}k')\\
		&= x(\rho(k^{-1}k)) = x(h'\rho(k)^{-1}\rho(k))\ \text{ for some } h'\in N\\
		&= x(\rho(k)^{-1})\rho(k')) = x(\rho(k'))\\
		&= y(k').
	\end{align*}
	
	Therefore, $\stab(y) = \stab(x)/N$. Finally, if we take $x\in X$ we can define $y\in\rho^*(X)$ by $y(k) = x(\rho(k))$, by retracing the previous steps we obtain that $\stab(\rho^*(X))=\G/N$. 
	
	Finally, if $X\subseteq A^G$  is a non-empty SFT that realizes $\G$; because $N\subseteq H$ for all $H\in\G$, $X\subseteq\Fix_A(N)$. Furthermore, by Lemma \ref{lem:tconj} we can take $X$ to be a nearest neighbor SFT with respect to the generating set $\rho(S)\cup T$. By Lemma ~\ref{lem:patterns_push_forward}, $\rho^*(X)$ is a non-empty SFT that realizes $\G/N$.
	
\end{proof}

Combining both propositions we obtain a characterization of realizable finitely generated normal subgroups.

\begin{theorem}
	\label{thm:quotient_ap}
	Let $N\trianglelefteq G$ be a non-trivial finitely generated normal subgroup. Then, $N$ is realizable in $G$ if and only if $G/N$ admits a strongly aperiodic SFT.
\end{theorem}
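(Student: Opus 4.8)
The plan is to derive both directions of the equivalence directly from the two constructions already in hand: the pull-back (Proposition~\ref{prop:quotient}) for the ``if'' direction, and the push-forward (Proposition~\ref{prop:vuelta}) for the ``only if'' direction. The key observation is that in both propositions the hypothesis $N \trianglelefteq \ker(X)$, or equivalently $N \subseteq \bigcap_{H\in\G}H$, is automatically met when $\G = \{N\}$, so the two statements become exact inverses of one another at the level of stabilizer data.

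For the ``if'' direction, suppose $G/N$ admits a strongly aperiodic SFT $X\subseteq A^{G/N}$, i.e. $\stab(X) = \{\{1_{G/N}\}\}$. Fix a section $\rho:G/N\to G$ and generating sets $S$ for $G/N$ and $T$ for $N$, so that $\rho(S)\cup T$ generates $G$ (here I use that $N$ is finitely generated). By Lemma~\ref{lem:patterns_pull_back}, the pull-back $\pi^*(X)$ is a non-empty $G$-SFT. By Proposition~\ref{prop:quotient}, $\stab(\pi^*(X)) = \rho(\stab(X))N = \{\rho(\{1_{G/N}\})N\} = \{N\}$, where the last equality uses that $\rho(1_{G/N})\in N$ so $\rho(1_{G/N})N = N$. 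Hence $N$ is realizable in $G$.

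For the ``only if'' direction, suppose $N$ is realizable, so there is a non-empty $G$-SFT $X$ with $\stab(X) = \{N\}$. Then every configuration of $X$ is fixed by $N$, i.e. $X \subseteq \Fix_A(N)$; since $N$ is finitely generated this is consistent and, by Lemma~\ref{lem:tconj}, we may first pass to a conjugate nearest neighbor SFT with respect to $\rho(S)\cup T$ without changing $\stab(X)$. Now apply Proposition~\ref{prop:vuelta}: the push-forward $\rho^*(X)$ is a non-empty $G/N$-SFT (here Lemma~\ref{lem:patterns_push_forward} is what guarantees the SFT property, again using finite generation of $N$) with $\stab(\rho^*(X)) = \stab(X)/N = \{N/N\} = \{\{1_{G/N}\}\}$. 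Thus $\rho^*(X)$ is a non-empty strongly aperiodic SFT on $G/N$.

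The only genuinely delicate point — and the place I expect to spend a sentence of care rather than real effort — is bookkeeping around the section $\rho$ and the generating sets: one must make sure the same finite generating set $\rho(S)\cup T$ of $G$ is used on both sides so that ``nearest neighbor'' has a consistent meaning, and that the identity coset behaves correctly ($\rho(1_{G/N})N = N$ and $N/N = \{1_{G/N}\}$). Finite generation of $N$ is used in both directions (to keep SFTs as SFTs via Lemmas~\ref{lem:patterns_pull_back} and~\ref{lem:patterns_push_forward}), and non-triviality of $N$ is what makes the statement non-vacuous but is not otherwise needed in the argument. Everything else is a direct quotation of Propositions~\ref{prop:quotient} and~\ref{prop:vuelta} specialized to the singleton family $\{N\}$.
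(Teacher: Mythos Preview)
Your proposal is correct and follows the paper's proof essentially verbatim: both directions are immediate specializations of Propositions~\ref{prop:quotient} and~\ref{prop:vuelta} to the singleton family $\{N\}$, with the pull-back giving realizability of $N$ from a strongly aperiodic SFT on $G/N$ and the push-forward giving the converse. The extra bookkeeping you spell out (generating sets, the nearest-neighbor reduction via Lemma~\ref{lem:tconj}, the identities $\rho(1_{G/N})N=N$ and $N/N=\{1_{G/N}\}$) is exactly what is implicit in the paper's two-line proof.
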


\begin{proof}
	If $N$ is realizable by a $G$-SFT $X$, by Proposition \ref{prop:vuelta}, its push-forward shift $\rho^*(X)$ realizes $\{1_{G/N}\}$. Conversely, if $\{1_{G/N}\}$ is realized by a $G/N$-SFT $Y$, then by Proposition~\ref{prop:quotient} its pull-back $\pi^*(Y)$ realizes $N$.
\end{proof}

As a consequence, we find many examples of non-realizable subgroups.

\begin{corollary}
	Let $G$ be a finitely generated group, and a finitely generated normal subgroup $N\trianglelefteq G$. If $G/N$ is virtually free, then $N$ is not realizable in $G$. In particular, every torsion-free nilpotent group has normal subgroups that are not realizable.
\end{corollary}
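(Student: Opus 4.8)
The plan is to apply Theorem~\ref{thm:quotient_ap}, which reduces the non-realizability of $N$ to the statement that $G/N$ does \emph{not} admit a strongly aperiodic SFT. So the whole task becomes: show that a virtually free group admits no strongly aperiodic SFT. The natural route is via Cohen's end count (Theorem~\ref{thm:CohenEnds}): a finitely generated group with at least two ends has no strongly aperiodic SFT. A finitely generated virtually free group is either finite (but we assume groups are infinite throughout, and in any case $G/N$ is infinite since $N$ has infinite index — wait, that needs care), virtually $\Z$ (two ends), or has infinitely many ends. In all infinite cases the number of ends is $2$ or $\infty$, hence $\geq 2$, so Cohen applies. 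I would first record the small point that $G/N$ is infinite: since $N$ is finitely generated and normal, and $G/N$ is the relevant quotient, one needs $[G:N] = \infty$; if $G/N$ were finite then $N$ would have finite index and in fact be realizable by the earlier lemma, so the interesting content of the corollary is the infinite-index case — I would either restrict to that case or note that a finite virtually free group is itself excluded by the standing convention. Then invoke the classification of ends of virtually free groups (Stallings / standard Bass–Serre theory) to conclude the end count is $\geq 2$.

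The second sentence of the corollary — every torsion-free nilpotent group has a non-realizable normal subgroup — I would deduce as follows. Let $G$ be finitely generated torsion-free nilpotent and non-trivial (the standing assumption that groups are infinite handles triviality concerns, and a non-trivial finitely generated torsion-free nilpotent group is infinite). Take $N$ to be the commutator subgroup $[G,G]$, or more simply any term of the lower central series chosen so that the quotient is a non-trivial finitely generated torsion-free \emph{abelian} group, i.e. $G/N \cong \Z^d$ for some $d \geq 1$. Concretely, $N = \overline{[G,G]}$ (the isolator, to keep the quotient torsion-free) works, and it is finitely generated because finitely generated nilpotent groups are Noetherian (every subgroup is finitely generated). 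Then $G/N \cong \Z^d$ is in particular virtually free precisely when $d = 1$, i.e. when $G/N \cong \Z$. If $d=1$ we are done by the first part. If $d \geq 2$ this particular $N$ is not obviously non-realizable via this corollary, so instead I would pick $N$ to be a finitely generated normal subgroup with $G/N \cong \Z$: such $N$ exists because any finitely generated torsion-free nilpotent group surjects onto $\Z$ (its abelianization is a non-trivial finitely generated torsion-free abelian group, so it surjects onto $\Z$, and the kernel of $G \twoheadrightarrow \Z$ is finitely generated by the Noetherian property). This $N$ is non-trivial (else $G \cong \Z$, still fine since $\Z/1 = \Z$ is virtually free, but then $N = 1$ is trivial and the setup of Theorem~\ref{thm:quotient_ap} needs $N$ non-trivial — however for $G \cong \Z$ the trivial subgroup being "realizable" would mean $\Z$ has a strongly aperiodic SFT, which it does not, so the conclusion still holds by Lemma~\ref{lem:PR_Z2}-type reasoning or directly) and normal with virtually free (indeed cyclic) quotient, so it is not realizable.

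The main obstacle, such as it is, is purely bookkeeping around the standing "groups are infinite" convention and the non-triviality hypothesis in Theorem~\ref{thm:quotient_ap}: one must make sure the chosen $N$ is both non-trivial and of infinite index (equivalently $G/N$ infinite) so that Cohen's theorem is the right tool, and one must confirm finite generation of $N$ — which is exactly where the Noetherian property of finitely generated nilpotent groups enters. A clean write-up would state a one-line lemma: \emph{a finitely generated infinite virtually free group has at least two ends, hence admits no strongly aperiodic SFT}, prove it from Stallings' theorem plus Theorem~\ref{thm:CohenEnds}, then apply Theorem~\ref{thm:quotient_ap} for the first sentence and the surjection-onto-$\Z$ argument for the second. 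No genuinely hard step is expected; the content is entirely in assembling Theorem~\ref{thm:quotient_ap}, Cohen's end theorem, and the structure theory of nilpotent groups.
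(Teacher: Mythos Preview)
Your proposal is correct and follows essentially the same route as the paper. The paper does not write out a proof of this corollary, but the surrounding text makes the intended argument clear: the first sentence is immediate from Theorem~\ref{thm:quotient_ap} together with the fact that infinite virtually free groups have at least two ends (hence no strongly aperiodic SFT by Theorem~\ref{thm:CohenEnds}), and for the second sentence the paper explicitly invokes the indicability of finitely generated torsion-free nilpotent groups (citing Higman) and the Noetherian property---exactly the surjection-onto-$\Z$ argument you outline. Your extra bookkeeping about the finite-quotient and trivial-$N$ edge cases is prudent and is handled in the paper by the standing convention that groups are infinite.
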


A particular class where this occurs is in the class of \define{indicable} groups. A group $G$ is said to be indicable if it admits an epimorphism $G\twoheadrightarrow\Z$. By the previous corollary, if $G$ is indicable and the kernel of the epimorphism is finitely generated, the kernel is not realizable. For example, finitely generated torsion-free nilpotent groups are indicable~\cite{higman1940units}, and all of their subgroups are finitely generated. Similarly, if an indicable group does not contain the free semi-group on two generators, the kernel of the epimorphism will be finitely generated (see~\cite[Lemma 3]{benli2012indicable}). On the other hand, in the case of just infinite groups all non-trivial normal subgroups are realizable, as they all have finite index.


\subsection{No restrictions}

As we have seen, being an SFT imposes heavy restrictions on realizability. But what happens if we just ask for a subshift? By combining our previous results with the existence of strongly aperiodic subshifts on every countable group, we can answer the question.

\begin{proposition}
\label{prop:no_reservations}
	Let $G$ be a finitely generated group and take a subgroup $H\leq G$. There exists a $G$-subshift that realizes $H$ if and only if $H$ is normal.
\end{proposition}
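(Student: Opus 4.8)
The statement asks us to characterize which subgroups can be realized by a (not necessarily finite type) subshift. The ``only if'' direction is immediate from Lemma~\ref{lem:no_normal}: if $H$ is realized by a non-empty subshift $X$, then $H\in\stab(X)$ and hence $gHg^{-1}\in\stab(X)$ for all $g\in G$; since $X$ realizes the singleton $\{H\}$, we must have $gHg^{-1}=H$ for all $g$, i.e. $H$ is normal. So the content is entirely in the ``if'' direction: every normal subgroup $N\trianglelefteq G$ is realized by some $G$-subshift.

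The plan for the ``if'' direction is to mirror the SFT argument of Theorem~\ref{thm:quotient_ap}, but replacing the appeal to a strongly aperiodic \emph{SFT} with the appeal to a strongly aperiodic \emph{subshift}, which exists on every countable group by the Gao--Jackson--Seward theorem (cited in Section~\ref{sec:strongly}). Concretely: if $N$ is non-trivial, then $G/N$ is a (countable) group, so it admits a strongly aperiodic subshift $Y\subseteq A^{G/N}$, i.e. a non-empty subshift realizing $\{1_{G/N}\}$. I would then take the pull-back $\pi^*(Y)\subseteq A^G$ along the quotient map $\pi:G\to G/N$. The pull-back of a subshift is always a subshift (this does not require $N$ finitely generated — only the \emph{SFT}-preservation clause of Lemma~\ref{lem:patterns_pull_back} needs finite generation), and it is non-empty since $Y$ is. The key point is that $\stab(\pi^*(Y))=\{N\}$: a configuration $y=x\circ\pi$ has $g\in\stab(y)$ iff $x(\pi(g)^{-1}k')=x(k')$ for all $k'$, i.e. iff $\pi(g)\in\stab(x)=\{1_{G/N}\}$, i.e. iff $g\in N$. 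This is exactly the computation already carried out in the proof of Proposition~\ref{prop:quotient} (with $\rho$ a section, $N=\ker\pi$), specialized to the trivial subgroup of $G/N$; since $\pi^*(Y)$ consists precisely of the configurations constant on cosets of $N$, every configuration has stabilizer exactly $N$. Hence $\pi^*(Y)$ realizes $N$. The remaining case is $N=\{1_G\}$ itself, which is the definition of a strongly aperiodic subshift on $G$ — again supplied by Gao--Jackson--Seward.

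There is essentially no hard obstacle here: the whole proof is ``Proposition~\ref{prop:quotient} without the finite-type bookkeeping, plus a black-box existence theorem.'' The one subtlety worth checking is that Proposition~\ref{prop:quotient}'s stabilizer computation genuinely goes through without assuming $N$ finitely generated — but inspecting its proof, the finite-generation hypothesis is used \emph{only} at the very end to invoke Lemma~\ref{lem:patterns_pull_back} and conclude the pull-back is an SFT; the stabilizer identity $\stab(\pi^*(X))=\rho(\stab(X))N$ uses only that $\rho$ is a section and $N=\ker\pi$, so it holds for arbitrary normal $N$. Thus one could even phrase the argument as a direct corollary of (the subshift-level content of) Proposition~\ref{prop:quotient} together with the Gao--Jackson--Seward theorem, handling $N=\{1_G\}$ separately (or noting it is the degenerate case $G/N=G$).
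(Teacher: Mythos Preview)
Your proposal is correct and follows essentially the same route as the paper: invoke Lemma~\ref{lem:no_normal} for the ``only if'' direction, and for the ``if'' direction pull back a strongly aperiodic subshift on $G/N$ (supplied by Gao--Jackson--Seward, or equivalently \cite{aubrun2019realization}) via Proposition~\ref{prop:quotient}. You are in fact slightly more careful than the paper in explicitly noting that the finite-generation hypothesis in Proposition~\ref{prop:quotient} is used only for the SFT conclusion, not for the stabilizer identity.
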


\begin{proof}
	By Lemma \ref{lem:no_normal}, if $H$ is not normal it is not realizable. Suppose $H$ is a normal subgroup. By \cite[Theorem 2.4]{aubrun2019realization}, we know every countable group admits a strongly aperiodic subshift. In particular, there exists a $G/H$-subshift $X_{G/H}$ that realizes $\{1_{G/H}\}$. By Proposition~\ref{prop:quotient} the pull-back shift, $\pi^*(X)$, realizes $H$.
\end{proof}

\section{Computational restrictions}
\label{sec:computational}

Recall from Theorem~\ref{thm:jeandel} that if a finitely generated recursively presented group $G$ admits a strongly aperiodic SFT, $\WP(G)$ must be decidable. We show that a similar result can be obtained for the realizability of subgroups.\\

Let $G$ be a finitely generated group of rank $n$, and $\pi:\F_n \to G$ the canonical epimorphism. For a $G$-subshift $X\subseteq A^G$, let $\pi^*(X)\subseteq A^{\F_n}$ be its pull-back, where for all $y\in\pi^*(X)$ there exists $x\in X$ such that $y = x\circ\pi$. As $\ker(\pi)$ is not necessarily finitely generated, $\pi^*(X)$ may not be an SFT. Nevertheless, it is an effective subshift when $G$ is recursively presented.

\begin{lemma}[\cite{jeandel2015aperiodic} Prop. 1.3 and Prop. 1.7]
	\label{lem:vacio_eff}
	Let $G$ be a finitely generated recursively presented group. Given an effective set of forbidden patterns $\mathcal{F}$ through an enumeration, there is a semi-algorithm that halts if and only if $\X_{\mathcal{F}} = \varnothing$.
\end{lemma}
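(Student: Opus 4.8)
The plan is to prove the lemma by a compactness argument combined with a recursive enumeration of the forbidden patterns. The statement to prove is Lemma~\ref{lem:vacio_eff}: given a finitely generated recursively presented group $G$ and an effective enumeration of a set of forbidden patterns $\mathcal{F}$, there is a semi-algorithm that halts if and only if $\X_{\mathcal{F}} = \varnothing$. The core idea is that emptiness of an SFT-like object over a recursively presented group is certifiable by a \emph{finite} amount of data, because compactness of $A^G$ forces non-emptiness to fail at a finite scale once we also account for the relations of $G$.

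First I would fix a finite symmetric generating set $S$ for $G$ and work with words in $S^*$ as names for group elements, using that $\WP(G)$ is recursively enumerable (Proposition~\ref{prop:WP}). The semi-algorithm runs two enumerations in parallel by dovetailing: one enumeration lists the forbidden patterns $p_1, p_2, \dots$ of $\mathcal{F}$ (given), and a second enumeration lists pairs $(u,v) \in S^* \times S^*$ with $u =_G v$ (available since $\WP(G)$ is r.e., applied to $uv^{-1}$). At stage $n$, the algorithm has a finite list $\mathcal{F}_n$ of forbidden patterns and a finite list $E_n$ of known equalities of words; from these it forms a finite combinatorial constraint-satisfaction instance: choose a radius-$r_n$ ball $B_n$ of the Cayley graph (words of length $\le r_n$), identify two vertices of $B_n$ whenever $E_n$ certifies they represent the same group element, and ask whether there is an assignment $B_n \to A$ respecting the identifications and avoiding every translate of every pattern in $\mathcal{F}_n$ whose support (after identification) fits inside $B_n$. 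This is a finite satisfiability check, so it is decidable; the semi-algorithm halts and outputs ``empty'' the first time this finite instance is unsatisfiable.

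Next I would argue correctness in both directions. If the semi-algorithm halts, then some finite instance is unsatisfiable; any genuine configuration $x \in \X_{\mathcal{F}}$, restricted to $B_n$ and pushed through the identifications (which it respects, since $x$ is a well-defined function on $G$ and the identifications are genuine equalities in $G$), would give a satisfying assignment — contradiction, so $\X_{\mathcal{F}} = \varnothing$. For the converse, suppose $\X_{\mathcal{F}} = \varnothing$; I want to show the algorithm eventually halts. Here is where compactness enters: if every finite instance were satisfiable, I would build, by a König's lemma / inverse limit argument over the increasing balls $B_n$ (with the full pattern set $\mathcal{F}$ and the full set of relations of $G$, which every instance eventually ``sees'' since both enumerations are exhaustive), a configuration $x \in A^G$ avoiding all of $\mathcal{F}$ — contradicting emptiness. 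One technical point to handle carefully: the identifications coming from $E_n$ are only a finite, possibly incomplete, picture of the relation structure of $G$, so I must make sure that the limiting object is genuinely a function on $G$ and not just on the finite quotients; this is handled by noting that any two words equal in $G$ are eventually certified equal by the $\WP(G)$-enumeration, so in the limit all required identifications are present, and conversely distinct group elements are never wrongly identified (we only ever add \emph{true} equalities).

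The main obstacle I anticipate is the bookkeeping to make the compactness direction airtight: one must choose the radii $r_n \to \infty$ and the truncations $\mathcal{F}_n$, $E_n$ in a coordinated way so that (i) each finite instance is genuinely decidable, (ii) a satisfying assignment for all instances assembles into a configuration avoiding \emph{all} of $\mathcal{F}$ on \emph{all} of $G$, and (iii) partial knowledge of the relations never causes a false ``unsatisfiable'' verdict. The cleanest route is to reduce to the free group: let $\pi : \F_S \to G$ be the canonical map, observe that $\X_{\mathcal{F}}$ over $G$ is non-empty iff the corresponding pattern system over $\F_S$ — which now includes, as additional ``local'' forbidden patterns, the constraints $x(u) = x(v)$ for $u =_G v$, enumerated via $\WP(G)$ — is non-empty, and then invoke the standard fact that emptiness of an \emph{effectively closed} subshift over $\F_S$ (a tree) is semi-decidable by exploring the finitely-branching tree of locally-admissible partial configurations and halting when it dies out, which is exactly König's lemma. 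I would cite this reduction to the free-group case and the tree-compactness argument rather than re-proving it in full, keeping the write-up short.
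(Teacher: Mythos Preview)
The paper does not give a proof of this lemma; it simply cites it as \cite{jeandel2015aperiodic}, Prop.~1.3 and Prop.~1.7. Your proposed argument is correct and is essentially the standard one: the ``cleanest route'' you single out at the end --- pulling back to $\F_S$ via $\pi^*$ so that the original subshift becomes an effectively closed $\F_S$-subshift (forbidden patterns from $\mathcal{F}$ together with the r.e.\ equality constraints coming from $\WP(G)$), and then invoking compactness/K\"onig's lemma on the tree to semi-decide emptiness --- is exactly the framing the paper adopts in the paragraph immediately preceding the lemma, and it is the approach in Jeandel's cited propositions. The more hands-on dovetailing description you give first is just an unpacking of that same argument; there is no genuine gap.
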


We link the realizability of a subgroup to its subgroup membership problem. 

\begin{definition}
	Let $G$ be a finitely generated group and $S$ a generating set.
	The \define{subgroup membership problem of $H$} in $G$ asks, given a set of words $u, w_i\in S^*$ for $i\in\{1,...,k\}$ such that $H=\langle \overline{w}_1, ..., \overline{w}_k\rangle$, whether $\overline{u}\in H$.
\end{definition}

\begin{lemma}
	\label{lem:memb}
	Let $H$ be a finitely generated group of a recursively presented group $G$. Then, there is a semi-algorithm for the subgroup membership problem of $H$ in $G$.
\end{lemma}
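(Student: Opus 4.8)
The plan is to show that subgroup membership in $H = \langle \overline{w}_1, \dots, \overline{w}_k \rangle$ is semi-decidable by exhibiting an effective enumeration of exactly the elements (words) lying in $H$. First I would use that $G$ is recursively presented, so by Proposition~\ref{prop:WP} the word problem $\WP(G)$ is recursively enumerable: there is a semi-algorithm that, on input $v \in S^*$, halts precisely when $\overline{v} =_G \epsilon$. The key observation is that $\overline{u} \in H$ if and only if there exists a finite sequence of indices $i_1, \dots, i_m \in \{1,\dots,k\}$ (with each $w_{i_j}$ possibly replaced by its formal inverse word) such that the concatenated word $w_{i_1} w_{i_2} \cdots w_{i_m}$ represents the same element of $G$ as $u$, i.e. $u^{-1} w_{i_1} \cdots w_{i_m} \in \WP(G)$.

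From here the algorithm is a standard dovetailing argument. I would enumerate all finite words $z$ over the alphabet $\{w_1^{\pm 1}, \dots, w_k^{\pm 1}\}$ — equivalently all finite products of the generators of $H$ and their inverses — and, for each such $z$ (read as a word in $S^*$ by concatenation), run the semi-algorithm for $\WP(G)$ on the word $u^{-1} z$. Because there are countably many such products and the $\WP(G)$ semi-algorithm is itself only a semi-algorithm, I must interleave these runs: at stage $n$, perform $n$ steps of the $\WP(G)$-semi-algorithm on each of the first $n$ candidate products. If any of these computations halts, output ``yes'' and stop. If $\overline{u} \in H$, then some product $z$ with $\overline{z} =_G \overline{u}$ exists, the word $u^{-1}z$ lies in $\WP(G)$, and the corresponding computation eventually halts, so the procedure terminates; if $\overline{u} \notin H$, no such $z$ exists and the procedure runs forever. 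This is exactly a semi-algorithm for the subgroup membership problem.

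There is no serious obstacle here; the lemma is genuinely soft and its role is purely to set up the harder direction of Theorem~\ref{intro:computational}, namely that realizability upgrades this semi-algorithm to a full decision procedure (presumably by combining it with a complementary semi-algorithm coming from the emptiness semi-algorithm of Lemma~\ref{lem:vacio_eff} applied to the pull-back of a realizing SFT). The only point requiring a line of care is bookkeeping: one should note that generating $H$ means $H$ is precisely the set of elements represented by finite products of the $\overline{w}_i^{\pm 1}$, so the search over such products is both sound (every product lands in $H$) and complete (every element of $H$ is hit). The recursive presentation hypothesis is used exactly once, to guarantee the $\WP(G)$ oracle used above is at least recursively enumerable rather than merely some abstract subset of $S^*$.
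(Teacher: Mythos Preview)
Your proof is correct and follows essentially the same approach as the paper: use recursive presentability to enumerate $\WP(G)$, observe that $\overline{u}\in H$ iff some product $z$ of the $w_i^{\pm1}$ satisfies $u^{-1}z\in\WP(G)$, and dovetail the search over all such $z$. The paper's version is terser (it omits the explicit dovetailing bookkeeping), but the argument is the same.
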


\begin{proof}
	Because $G$ is recursively presented we know $\WP(G)$ can be enumerated (Proposition~\ref{prop:WP}). Now given an input $u, w_i\in S^*$ for the subgroup membership problem of $H$, we know $\overline{u}\in H$ if and only if there exists a word $w\in\{w_1^{\pm 1}, ..., w_n^{\pm1}\}^*$ such that $uw^{-1}=_G \varepsilon$. The semi-algorithm consists in enumerating all such words $w$ and seeing if $uw^{-1}$ appears in the enumeration of $\WP(G)$.
\end{proof}

\begin{theorem}
	\label{thm:jean_gen}
	Let $G$ be a finitely generated recursively presented group and $H$ a finitely generated subgroup. If $H$ is realizable, then the subgroup membership problem of $H$ in $G$ is decidable.
\end{theorem}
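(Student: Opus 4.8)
The plan is to mirror Jeandel's argument (Theorem~\ref{thm:jeandel}) but with the trivial subgroup replaced by $H$. Suppose $H = \langle \overline{w}_1, \dots, \overline{w}_k\rangle$ is realizable, say by a non-empty $G$-SFT $X$ with $\stab(X) = \{H\}$. By Lemma~\ref{lem:memb} there is already a semi-algorithm that halts precisely when $\overline{u}\in H$, so it suffices to produce a semi-algorithm that halts precisely when $\overline{u}\notin H$. Given an input word $u\in S^*$, I would build from $X$ a new $G$-SFT $X_u$ that is non-empty if and only if $\overline{u}\notin H$: intuitively, $X_u$ consists of configurations of $X$ that are \emph{not} fixed by $\overline{u}$. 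Concretely, take $X$ to be a nearest-neighbor SFT (Lemma~\ref{lem:tconj}) and let $X_u$ be the subshift of $X\times X$ (or of $X$ with a marked position) recording a pair $(x, \overline{u}\cdot x)$ together with a witness cell where they differ; this is an SFT because $\overline{u}$ is a fixed group element, so ``$x$ and $\overline{u}\cdot x$ disagree somewhere'' can be enforced by finitely many local rules anchored at the origin coset — more carefully, one forbids the configuration from agreeing with its $\overline{u}$-shift at the identity. Then $X_u\neq\varnothing$ iff some $x\in X$ has $\overline{u}\notin\stab(x) = H$, i.e. iff $\overline{u}\notin H$.

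Next I would apply the effective-emptiness machinery. Since $G$ is recursively presented, the SFT $X_u$ — although its forbidden pattern set is finite and explicit once we know a presentation — actually needs the pull-back trick: passing to $\F_n$ via the canonical epimorphism $\pi:\F_n\to G$, the pull-back $\pi^*(X_u)$ is an effective $\F_n$-subshift (the forbidden patterns include an enumeration of relators of $G$), and by Lemma~\ref{lem:vacio_eff} there is a semi-algorithm halting iff $\pi^*(X_u) = \varnothing$ iff $X_u = \varnothing$ iff $\overline{u}\in H$. Wait — that gives the wrong direction; so instead I run the semi-algorithm of Lemma~\ref{lem:vacio_eff} on $\pi^*(X_u)$, which halts iff $X_u=\varnothing$ iff $\overline{u}\in H$, \emph{and simultaneously} run the Lemma~\ref{lem:memb} semi-algorithm, which halts iff $\overline{u}\in H$. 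That only decides membership, not non-membership. The correct pairing: Lemma~\ref{lem:memb} gives a positive semi-decision ($\overline{u}\in H$ detectable), and I need a negative one. For that, observe $\overline{u}\notin H$ iff $X_u\neq\varnothing$; non-emptiness of an effective subshift is semi-decidable by exhibiting a finite valid pattern extending arbitrarily — but that is \emph{not} generally semi-decidable. So the real move, exactly as in Jeandel, is: run Lemma~\ref{lem:vacio_eff} on $\pi^*(X'_u)$ where $X'_u$ is instead built so that it is non-empty iff $\overline{u}\in H$ — then its emptiness is semi-decidable and detects $\overline{u}\notin H$. Construct $X'_u$ as the subshift of configurations of $X$ that \emph{are} fixed by $\overline{u}$ (enforce $x(g) = x(\overline{u}g)$ at the origin; this is an SFT). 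Since $X\neq\varnothing$ and $\stab(X)=\{H\}$, every $x\in X$ has stabilizer exactly $H$, so $X'_u = X$ if $\overline{u}\in H$ and $X'_u = \varnothing$ if $\overline{u}\notin H$. Then Lemma~\ref{lem:vacio_eff} applied to $\pi^*(X'_u)$ halts iff $\overline{u}\notin H$, giving the negative semi-algorithm. Combined with Lemma~\ref{lem:memb}, the subgroup membership problem of $H$ is decidable.

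The main obstacle is verifying that $X'_u$ is genuinely an SFT with an \emph{effectively computable} (enumerable) forbidden-pattern set after pull-back to $\F_n$, and that emptiness of $X'_u$ over $G$ matches emptiness of $\pi^*(X'_u)$ over $\F_n$ — this is where Lemma~\ref{lem:vacio_eff} and the pull-back construction must be invoked with care, since $\ker\pi$ need not be finitely generated. One must check that the local rule ``$x(g) = x(\overline{u}g)$'' pulls back to an effective (even finite, once a word for $u$ is fixed) family of $\F_n$-patterns modulo the relator enumeration, and that non-emptiness is preserved both ways by Lemma~\ref{lem:free_extension}-style reasoning for pull-backs (the remark after the pull-back definition). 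The rest — the bookkeeping that $\stab(x)=H$ for all $x\in X$ forces the dichotomy $X'_u\in\{X,\varnothing\}$ — is immediate from the definition of realizability.
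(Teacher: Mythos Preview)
Your final approach is correct and essentially identical to the paper's: both intersect (the pull-back of) $X$ with the fixed-point SFT $\{x : u\cdot x = x\}$, observe this intersection is empty iff $\overline{u}\notin H$, and invoke Lemma~\ref{lem:vacio_eff} on the resulting effective $\F_n$-subshift for the negative semi-algorithm, pairing it with Lemma~\ref{lem:memb}. The only cosmetic difference is that the paper pulls $X$ back to $\F_n$ first and then intersects with the $\F_n$-SFT $Z=\{x\in A^{\F_n}:u\cdot x=x\}$, whereas you form $X'_u$ on $G$ and then pull back; since $\pi^*(X'_u)=\pi^*(X)\cap Z$, these are the same object.
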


\begin{proof}
	Let $X$ be a $G$-SFT that realizes $H$ and $Y = \pi^*(X)$ its pull-back to $\F_n$, where $n$ is the rank of $G$. From Proposition~\ref{prop:quotient} we know that for every $y\in Y$, $\stab(y) = \pi^{-1}(H)$.  Now, let $u, w_i\in S^*$ be an input to the subgroup membership problem for $H$ in $G$. By reducing $u$ we can suppose that $u\in\F_n$. We define the $\F_n$-SFT,
	$$Z = \{x\in A^{\F_n}\mid \forall g\in\F_n,\ u\cdot x(g) = x(g)\}.$$
	 This way, $Y\cap Z = \varnothing$ if and only if $u\notin \pi^{-1}(H)$, i.e. $u$ does not belong to $H$ in $G$. Because $Y$ is effective and $Z$ is an SFT, by Lemma \ref{lem:vacio_eff} there is a semi-algorithm to determine if $Y\cap Z$ is empty. Thus, there is a semi-algorithm to determine if an element does not belong to a group. Paired with Lemma \ref{lem:memb}, this implies the subgroup membership problem of $H$ in $G$ is decidable.
\end{proof}

\begin{example}
	\label{ex:Rips}
	Using Rip's construction~\cite{rips1982subgroups} with a finitely presented group with undecidable word problem, it is possible to obtain a hyperbolic group with a finitely generated normal subgroup with undecidable subgroup membership problem. This argument is usually attributed to Sela \cite{gromov1992asymptotic}.
\end{example}

\begin{example}
	\label{ex:Mihailova}
	It is possible to find subgroup with undecidable membership problem within $\F_n\times\F_n$, due to an argument by Mihailova~\cite{mihailova1968occurrence}. Given a finitely generated group $G$ of rank $n$, we define its Mihailova subgroup as
	$$M(G) = \{(w_1,w_2)\in\F_n\times\F_n \mid w_1 =_{G} w_2\}.$$
	Notice that if $G$ is finitely presented by a set of generators $S$ and relations $R$, the set of generators including $\{(s,s)\}_{s\in S}$ and $\{(1,r)\}_{r\in R}$ are a generating set for $M(G)$. Then, the subgroup membership problem for $M(G)$ in $\F_{n}\times\F_{n}$ is decidable if and only if $G$ has decidable word problem.
\end{example}

\subsection{Restrictions on $\Z^d$}
\label{sec:restrictionsZd}
There are other types of computational restrictions on realizability that do not involve membership problems. A particular family of these restrictions comes from the study of periodicity on $\Z^d$-SFTs.
\subsubsection{One-dimensional subshifts}

Nearest neighbor $\Z$-SFTs have a rigid structure. Given a set of nearest neighbor patterns $\Fo$ over an alphabet $A$, we define its corresponding \define{tileset graph}, $\Gamma_{\Fo}$, by the set of vertices $A$ and edges given by $(a,b)\in A^2$ such that $(a,b)\not\in\Fo$, where $a$ is its initial vertex, $b$ its final vertex and $s$ its label. This way, $\X_\Fo\subseteq A^\Z$ is the set of bi-infinite directed walks on $\Gamma_{\Fo}$.

Conversely, given a finite directed graph $\Gamma = (V,E)$, we define its associated nearest neighbor SFT $X_{\Gamma}\subseteq V^{\Z}$ as
$$X_{\Gamma} = \{x\in V^{\Z} \mid (x(k), x(k+1))\in E\}.$$
See~\cite{lind2021introduction} for a classic introduction to $\Z$-subshifts and the consequences of the correspondence between nearest neighbor SFTs and directed graphs.

For simplicity, instead of looking at stabilizers, we will look at the corresponding multiples of the subgroups. For a subshift $X\subseteq A^\Z$ we define its \define{multiples} by
$$M(X) = \{p\in\N \mid \exists x\in X, \stab(x) = p\Z\}$$

The set of periods of nearest neighbor $\Z$-SFT have been completely classified. We start by looking at the particular case where the $\Gamma$ is strongly connected.
\begin{theorem}[\cite{ali2018periods}]
\label{thm:periodic}
For $P\subseteq \N\setminus\{0\}$ the following are equivalent:
\begin{itemize}
    \item There exists a strongly connected graph $\Gamma$ such that $P = M(X_{\Gamma})\setminus\{0\}$,
    \item $P$ is a singleton or there exists $k\in\N$ and finite sets $F, F'\subseteq\N$ such that $P = F \cup (k\N\setminus F')$.  
\end{itemize}
\end{theorem}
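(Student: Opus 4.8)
My plan is to establish the two implications separately, working throughout with the correspondence between nearest neighbor $\Z$-SFTs and finite directed graphs. Recall that a configuration $x\in X_\Gamma$ with $\stab(x)=p\Z$ corresponds to a bi-infinite walk that is periodic with period exactly $p$, which in turn is the bi-infinite repetition of a closed walk of length $p$ in $\Gamma$ that is not itself a repetition of a shorter closed walk (i.e.\ a \emph{primitive} closed walk). So $M(X_\Gamma)\setminus\{0\}$ is exactly the set of lengths of primitive closed walks in $\Gamma$; equivalently, $p\in M(X_\Gamma)$ iff there is a closed walk of length $p$ and no proper divisor $d\mid p$ admits a closed walk of length $d$ generating the same bi-infinite configuration. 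The cleanest reformulation: $p \in M(X_\Gamma)$ iff $\Gamma$ contains a closed walk of length $p$ and $p$ is the minimal period of the associated bi-infinite sequence. I would first record this dictionary carefully as a lemma, since everything else is bookkeeping on top of it.

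For the forward direction (strongly connected $\Gamma$ gives a set of the stated form), the key fact is that in a strongly connected graph the set $L(\Gamma)$ of \emph{all} closed-walk lengths is eventually an arithmetic progression: there is a $k$ (the gcd of all cycle lengths, i.e.\ the period of the graph in the Perron--Frobenius sense) and a bound $N$ such that $L(\Gamma)\cap k\N \supseteq k\N\cap[N,\infty)$, and $L(\Gamma)\subseteq k\N$. This is a standard numerical-semigroup / Chicken McNugget argument applied to the cycle lengths of $\Gamma$ after dividing by $k$. Then I would argue that the primitive closed-walk lengths differ from $L(\Gamma)$ only by a finite set: for $p$ large and divisible by $k$, a closed walk of length $p$ can be chosen primitive (e.g.\ by concatenating a fixed cycle $C$ with a cycle of coprime-to-useful length to break any spurious shorter period, or more simply: if every closed walk of length $p$ were imprimitive, its underlying period $d$ would be a proper divisor with $d\in L(\Gamma)$, and a counting/length argument on the finitely many "bad" $p$ bounds the exceptional set). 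This yields $M(X_\Gamma)\setminus\{0\} = F\cup(k\N\setminus F')$ for finite $F,F'$, with the singleton case arising when $\Gamma$ is a single cycle. Conversely, given $P=F\cup(k\N\setminus F')$, I would \emph{construct} a strongly connected graph realizing it: take a "bouquet" of cycles of lengths chosen so their gcd is $k$ and whose numerical semigroup hits every sufficiently large multiple of $k$, then attach finitely many extra cycles/chords to add the finitely many lengths in $F$ and, by a padding trick, arrange that the finitely many unwanted lengths in $F'$ fail to be realized as \emph{primitive} periods (this last point — deleting specific short periods without disturbing the rest — is the fiddly part). The singleton case $P=\{p\}$ is realized by the $p$-cycle.

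The step I expect to be the main obstacle is the precise control of the exceptional finite sets $F$ and $F'$ — in particular, in the converse construction, forcing certain specific multiples of $k$ to be \emph{absent} from the set of primitive periods while keeping the graph strongly connected and keeping all the desired periods. Removing a length from the all-walks spectrum of a strongly connected graph is essentially impossible once the graph is "rich enough", so the trick must be to ensure those lengths only ever appear as \emph{imprimitive} walks; this likely requires choosing the cycle lengths with care (e.g.\ so that any walk of the forbidden length necessarily decomposes through a common sub-cycle). I would handle this by first reducing to the case $k=1$ (rescale the graph by subdividing each edge into $k$ edges), proving the statement there, and then lifting back. The forward direction's "primitive vs.\ all closed walks differ by a finite set" claim is the other place to be careful, but it should follow from a clean pigeonhole once the eventual-arithmetic-progression structure of $L(\Gamma)$ is in hand.
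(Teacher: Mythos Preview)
The paper does not prove this theorem: it is quoted from \cite{ali2018periods}, and the text following the statement only \emph{illustrates} part of the converse implication by exhibiting realizing graphs for the special families $\{p\}$, $\{n+k \mid n\in\N\}$, and $\{a(n+k)\mid n\in\N\}$. There is no argument in the paper for the forward direction, nor for the full converse with arbitrary finite exceptional sets $F,F'$. So there is nothing to compare your full proof plan against; you are attempting something the paper simply outsources.

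That said, your outline for the forward direction is sound in spirit: the set $L(\Gamma)$ of closed-walk lengths in a strongly connected graph is indeed a numerical semigroup scaled by the graph period $k$, and the passage from $L(\Gamma)$ to the set of \emph{primitive} lengths only perturbs things by a finite set. Your sketch of why large multiples of $k$ admit primitive representatives is the right idea but needs to be made precise (the parenthetical ``counting/length argument'' is not yet an argument). For the converse, your instinct that the hard part is \emph{excluding} finitely many specified multiples of $k$ from the primitive spectrum is correct, and your proposed reduction to $k=1$ by edge subdivision is the natural move. The paper's illustrative constructions (cycle $\Gamma_p$, the triangulated cycle $\Gamma_{n+k}$, and its subdivisions) cover only the case $F'=\varnothing$ after a change of parametrization, so they will not help you with the exclusion step; for that you would indeed need to go to the cited source or work it out independently.
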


To understand this result let us see how to realize the subsets mentioned on the previous theorem. First off, singletons. For $P = \{p\}$, with $p\neq 0$, the graph $\Gamma_p$ consisting of a directed simple cycle of length $p$ defines an SFT that realizes $P$. Explicitly,
	$$X_{\Gamma_p} = \{x\in\{0, ..., p-1\}^{\Z} \mid x_{i+1} = x_{i}+1\!\mod p\}.$$
Using these subshifts and Proposition~\ref{prop:props}, we can realize any finite subset of $\N$ not containing $0$. Next, for subsets of the form $P = \{n+k\mid n\in\N\}$, define the graph $\Gamma_{n+k}$ as a directed cycle with $k$ vertices, with additional vertices and edges such that every edge of the cycle is a triangle oriented in the same direction as the cycle. For example, for $k=4$ we obtain the graph from Figure~\ref{fig:grafo}.

\begin{figure}[H]
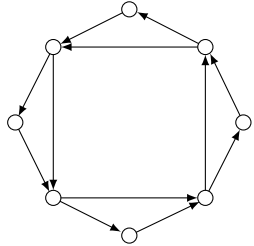

		\centering
		\includestandalone[scale=1]{figures/grafo1}
		\caption{The graph $\Gamma_{n+4}$ that defines the SFT with set of multiples $\{n+4\mid n\in\N\}$.}
		\label{fig:grafo}
	\end{figure}

A simple computation shows that $M(X_{\Gamma_{n+k}}) = \{n+k\mid n\in\N\}$. Finally, take $a\geq 1$. If we replace every edge in $\Gamma_{n+k}$ by a directed path of length $a$ in the same direction as the edge, we obtain a new graph $\Gamma_{a(n+k)}$. As this new graph's subscript suggests, its corresponding SFT realizes the subset $\{a(n+k) \mid n\in\N\}$.\\

To complete the characterization of multiples (and therefore stabilizers), we must understand the role of $0$. 
\begin{lemma}
\label{lem:zero}
Let $X_{\Gamma}\subseteq A^{\Z}$ be a nearest neighbor SFT. Then, $\{0\}\in\stab(X_{\Gamma})$ if and only if $\stab(X_{\Gamma})$ is infinite.
\end{lemma}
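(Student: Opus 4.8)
The plan is to prove the statement in the setting of the surrounding discussion, where $\Gamma$ is strongly connected, by showing that each of the two sides of the equivalence is equivalent to the graph‑theoretic condition that $\Gamma$ is \emph{not} a simple directed cycle. The easy case is the ``simple cycle'' one: if $\Gamma$ is a simple cycle of length $p$, then at every vertex there is a unique incoming and a unique outgoing edge, so a bi‑infinite walk on $\Gamma$ is completely determined once it is known at one vertex; hence $X_\Gamma$ is a single orbit, $\stab(X_\Gamma)=\{p\Z\}$ is a singleton, and it contains no aperiodic configuration. It remains to prove that if $\Gamma$ is strongly connected but not a simple cycle, then $X_\Gamma$ has an aperiodic configuration \emph{and} $\stab(X_\Gamma)$ is infinite.

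For this, such a $\Gamma$ has a vertex $v$ of out‑degree at least $2$. Choosing two distinct out‑edges at $v$ and shortest return paths to $v$ from their heads (these paths are simple, hence avoid $v$ except at their endpoint, by strong connectivity) yields two distinct closed walks $\gamma_1\neq\gamma_2$ based at $v$ in which $v$ occurs only at the two endpoints. The main tool is the coding map $c:\{1,2\}^{\Z}\to X_\Gamma$ sending $(b_k)_{k\in\Z}$ to the bi‑infinite walk obtained by concatenating the blocks $\gamma_{b_k}$ at $v$. Because the occurrences of the letter $v$ along $c(b)$ are precisely the block boundaries and the two blocks are distinguishable, $c$ is injective and continuous.

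Both conclusions then follow from this one construction. First, $X_\Gamma$ contains a homeomorphic copy of $\{1,2\}^{\Z}$, hence is uncountable; since a $\Z$‑SFT has only countably many periodic configurations (each is determined by a finite word), $X_\Gamma$ must contain an aperiodic one, that is, $\{0\}\in\stab(X_\Gamma)$. Second, for each $m\geq 2$ set $x_m=c\big((1^{m-1}2)^{\infty}\big)$ and $L_m=(m-1)|\gamma_1|+|\gamma_2|$. Then $x_m$ is periodic with $L_m\Z\subseteq\stab(x_m)$; conversely, from $x_m$ one reads off the positions of the letter $v$ (the block boundaries) and the labels of the blocks between consecutive such positions, so any period of $x_m$ descends to a period of the cyclic word $1^{m-1}2$, which is primitive since it contains exactly one letter $2$. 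Hence $\stab(x_m)=L_m\Z$, and since $L_2<L_3<\cdots$ the set $\stab(X_\Gamma)$ is infinite.

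I expect the delicate point to be this last piece of bookkeeping: verifying that $c$ is injective and that $x_m$ has minimal period exactly $L_m$ rather than a proper divisor. This is precisely what the two design choices buy — keeping $v$ off the interior of $\gamma_1$ and $\gamma_2$ makes the block boundaries visible inside any element of $c(\{1,2\}^{\Z})$, and the primitivity of $1^{m-1}2$ rules out a shorter period of the block sequence. The subcase $|\gamma_1|=|\gamma_2|$ should be argued on its own, since there the positions of $v$ alone are already periodic with a smaller period and one must additionally invoke the block labels to recover $L_m$. As a byproduct one also recovers, via Theorem~\ref{thm:periodic}, that whenever $\{0\}\in\stab(X_\Gamma)$ the set of nonzero multiples of $X_\Gamma$ is of the eventually‑arithmetic form $F\cup(k\N\setminus F')$ rather than a singleton.
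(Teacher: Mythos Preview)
Your proof is correct and takes a genuinely different route from the paper's. The paper argues the two implications separately: for the forward direction it starts from an aperiodic $x\in X_\Gamma$, picks a letter $a$ that recurs in $x$, asserts that the set $W$ of return words to $a$ has at least two elements $w_1,w_2$ (since otherwise $x$ would be periodic), and builds the periodic points $(w_1^iw_2^j)^\infty$; for the backward direction it notes that infinitely many distinct periods force two cycles $\gamma_1,\gamma_2$ based at a common vertex and then substitutes an aperiodic binary sequence to produce an aperiodic point of $X_\Gamma$. You instead show that, for strongly connected $\Gamma$, both conditions are equivalent to the purely combinatorial statement that $\Gamma$ is not a single directed cycle, and you verify this via one coding construction built directly from a branch vertex of $\Gamma$. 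Your route is more structural and unified, and your verification that $\stab(x_m)=L_m\Z$ (including the equal-length subcase, where the positions of $v$ alone do not suffice) is more explicit than the paper's corresponding minimal-period claims. It also sidesteps a delicate point in the paper's forward direction: the recurring letter $a$ may recur in only one direction of $x$ (e.g.\ $x=\cdots aaabbb\cdots$ in the complete digraph on $\{a,b\}$), and then $|W|\ge 2$ does not follow from aperiodicity alone. Conversely, the paper's approach has the conceptual advantage of extracting the two loops from the given configuration rather than from the ambient graph, so that each implication reads as a direct construction from its hypothesis.
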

\begin{proof}
    If $\{0\}\in\stab(X_{\Gamma})$, there exists an aperiodic configuration $x\in X_{\Gamma}$. Then, there exists $a\in A$ that appears infinitely often in $x$. Let $W$ be the set of words that occur in $x$ between two consecutive occurrences of $a$. Because $x$ is aperiodic, there exist two words $w_1, w_2\in W$. By their definition, both words define cycles in $\Gamma$ based at the vertex $a$. Consider $i, j\geq 1$ and the configuration $y_{i,j} = (w_1^iw_2^j)^{\infty}$. Because $w_1$ and $w_2$ are cycles in $\Gamma$, $y\in X_{\Gamma}$. Furthermore, $y$ is a periodic point of minimal period $i|w_1| + j|w_2|$. Therefore, $\stab(X_{\Gamma})$ is infinite.

    Conversely, suppose $\stab(X_{\Gamma})$ is infinite. Because every periodic point corresponds to a cycle in $\Gamma$, there exist two cycles $\gamma_1$ and $\gamma_2$ based at the same vertex. Let $\mu\in\{1,2\}^{\Z}$ be a bi-infinite aperiodic word (such as the Thue-Morse word, or any Sturmian word). We define
	$$x = ... \gamma_{\mu_{-2}}\gamma_{\mu_{-1}}.\gamma_{\mu_{0}}\gamma_{\mu_{1}}\gamma_{\mu_{2}} ...$$
	By definition $x\in X_{\Gamma}$, and as $\mu$ is aperiodic, $\stab(x) = \{0\}$.
\end{proof}

The last ingredient is what happens when $\Gamma$ is not strongly connected. In this case, the set of stabilizers is the union of the stabilizers associated to each strongly connected component of $\Gamma$ (for more details see~\cite{lind2021introduction}). By joining this fact with Theorem~\ref{thm:periodic} we obtain:
\begin{corollary}[Corollary 1~\cite{ali2018periods}]
\label{cor:periods}
    For $P\subseteq \N\setminus\{0\}$ the following are equivalent:
\begin{itemize}
    \item There exists a graph $\Gamma$ such that $P = M(X_{\Gamma})\setminus\{0\}$,
    \item $P$ there exist $m\in\N$, $k_i\in\N$ and finite sets $F, F_i\subseteq\N$ such that $P = F \cup \bigcup_{i=1}^m (k_i\N\setminus F_i)$.  
\end{itemize}
\end{corollary}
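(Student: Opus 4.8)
The plan is to derive the two-way implication by reducing it to the strongly-connected case already settled in Theorem~\ref{thm:periodic}, combined with the remark preceding the corollary that the set of stabilizers of $X_\Gamma$ is the union, over the strongly connected components (SCCs) of $\Gamma$, of the sets of stabilizers coming from each component. I would first set up notation: decompose $\Gamma$ into its SCCs $\Gamma_1,\dots,\Gamma_m$ (ignoring components that carry no bi-infinite walk, i.e. isolated sources/sinks, since those contribute nothing to $M(X_\Gamma)$). A bi-infinite walk in $\Gamma$ eventually stays within a single SCC in both directions, and a periodic walk lies entirely within one SCC; conversely any periodic walk in $\Gamma_i$ extends to a periodic configuration of $X_\Gamma$. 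Hence $M(X_\Gamma)\setminus\{0\} = \bigcup_{i=1}^m \bigl(M(X_{\Gamma_i})\setminus\{0\}\bigr)$.

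For the forward direction, suppose $P = M(X_\Gamma)\setminus\{0\}$ for some graph $\Gamma$. Apply the decomposition above, and then apply Theorem~\ref{thm:periodic} to each strongly connected $\Gamma_i$: each $M(X_{\Gamma_i})\setminus\{0\}$ is either a singleton or of the form $F_i'\cup(k_i\N\setminus F_i')$ for suitable $k_i\in\N$ and finite $F_i',F_i''\subseteq\N$. Taking the union over $i$, the singletons and the finite sets $F_i'$ all merge into one finite set $F$, while the ``cofinite-in-an-arithmetic-progression'' pieces $k_i\N\setminus F_i''$ survive as they are (a singleton $\{p\}$ can also trivially be absorbed into $F$, or written as the degenerate piece if one prefers). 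This yields exactly $P = F\cup\bigcup_{i=1}^m(k_i\N\setminus F_i)$.

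For the converse, given $P = F\cup\bigcup_{i=1}^m(k_i\N\setminus F_i)$, I would build $\Gamma$ as a disjoint union of graphs realizing each piece and invoke Proposition~\ref{prop:props} (or directly the SCC-union remark) to conclude $M(X_\Gamma)\setminus\{0\}=P$. Each set $k_i\N\setminus F_i$: note $k_i\N\setminus F_i = \{k_i n \mid n\in\N\}\setminus F_i$, which for large enough threshold $\ell$ equals $\{k_i n\mid n\geq \ell\} = \{k_i(n+\ell)\mid n\in\N\}$ up to a further finite set, i.e. it is $\{a(n+k)\mid n\in\N\}$ (with $a=k_i$, $k=\ell$) together with finitely many extra small multiples of $k_i$ — and by Theorem~\ref{thm:periodic} and the explicit construction of $\Gamma_{a(n+k)}$ described just before Lemma~\ref{lem:zero}, such a set is realized by a strongly connected graph; the finitely many leftover small multiples get folded into $F$. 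The finite set $F$ is handled by the simple cycles $\Gamma_p$ and Proposition~\ref{prop:props}(1). The main (minor) obstacle is the bookkeeping of the finite sets: reconciling the ``$F, F_i$'' in the target form with the ``$F', F''$'' produced by Theorem~\ref{thm:periodic} requires care that the finite exceptional sets can always be shuffled between the global finite part $F$ and the arithmetic-progression parts without changing $P$, using that adding or removing finitely many elements is achievable by throwing in extra simple-cycle components. This is routine once one observes that $k_i\N\setminus F_i$ and $k_i\N\setminus F_i'$ differ by a finite set for any two finite $F_i,F_i'$, and finite sets are freely realizable and freely absorbable.
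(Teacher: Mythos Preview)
Your proposal is correct and follows essentially the same route as the paper: the paper's argument is the one-line remark just before the corollary that the set of multiples of $X_\Gamma$ is the union over the strongly connected components of the corresponding sets, and that combining this with Theorem~\ref{thm:periodic} yields the result. Your write-up simply fleshes out that sketch (including the converse via disjoint unions and the finite-set bookkeeping), so there is nothing to add.
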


\vspace{0.2cm}
We can now bring everything together to obtain a characterization of realizable sets.
\begin{theorem}
    $P\subseteq\N$ is realizable if and only if either
    \begin{itemize}
        \item $P$ is finite and $0\notin P$,
        \item there exists $m\in\N$, $(k_i)_{i=1}^m\in\N^m$, $(a_i)_{i=1}^m$, and a finite set $F\subseteq\N$ with $0\in F$ such that 
        $$P = F\cup \bigcup_{i=1}^m\{a_i(n + k_i) \mid n\in\N\}.$$
    \end{itemize}
\end{theorem}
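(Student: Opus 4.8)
The plan is to assemble the results of this subsection: the statement repackages Theorem~\ref{thm:periodic}, Lemma~\ref{lem:zero} and Corollary~\ref{cor:periods} together with the explicit graph families $\Gamma_p$, $\Gamma_{n+k}$, $\Gamma_{a(n+k)}$ and the union operation of Proposition~\ref{prop:props}.

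\emph{Necessity.} Suppose $P$ is realized by a nonempty $\Z$-SFT $X$. Stabilizers are conjugacy invariants (Lemma~\ref{lem:tconj}) and every $\Z$-SFT is conjugate to a nearest neighbor one, so we may assume $X=X_{\Gamma}$ for a finite directed graph $\Gamma$, hence $P=M(X_{\Gamma})$. If $0\notin P$, then Lemma~\ref{lem:zero} forces $\stab(X_{\Gamma})$ to be finite, so $P$ is finite, does not contain $0$, and is nonempty since $X_{\Gamma}\neq\varnothing$: this is the first alternative. If $0\in P$, apply Corollary~\ref{cor:periods} to $P\setminus\{0\}$, obtaining $P\setminus\{0\}=F'\cup\bigcup_{i=1}^{m}(k_i\N\setminus F_i)$. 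The indices with $k_i=0$ contribute nothing to $P\setminus\{0\}$ and may be discarded, so assume each $k_i\geq 1$; choosing $N_i$ with $F_i\cap k_i\N\subseteq\{0,k_i,\dots,k_i(N_i-1)\}$ rewrites $k_i\N\setminus F_i=G_i\cup\{k_i(n+N_i)\mid n\in\N\}$ for a finite set $G_i$. Setting $a_i:=k_i$ and $F:=\{0\}\cup F'\cup\bigcup_i G_i$ (a finite set containing $0$) presents $P$ in the second form.

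\emph{Sufficiency.} If $P$ is finite, nonempty and $0\notin P$, then for each $p\in P$ the simple cycle graph $\Gamma_p$ realizes $\{p\}$, and Proposition~\ref{prop:props} (union of SFTs over disjoint alphabets) realizes $P$. If $P=F\cup\bigcup_{i=1}^{m}\{a_i(n+k_i)\mid n\in\N\}$ with $0\in F$ and $m\geq 1$: for each $i$ the graph $\Gamma_{a_i(n+k_i)}$ (obtained from $\Gamma_{n+k_i}$ by subdividing every edge into a directed path of length $a_i$) satisfies $M(X_{\Gamma_{a_i(n+k_i)}})=\{0\}\cup\{a_i(n+k_i)\mid n\in\N\}$, the term $0$ being forced by Lemma~\ref{lem:zero} since this set is infinite; realize $F\setminus\{0\}$ as in the finite case; then Proposition~\ref{prop:props} combines all these SFTs, and since $0\in F$ the resulting set of multiples is $\{0\}\cup(F\setminus\{0\})\cup\bigcup_i\{a_i(n+k_i)\mid n\in\N\}=P$.

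\emph{Main obstacle.} None of the steps is technically difficult; the only real care is the bookkeeping around $0$. By Lemma~\ref{lem:zero}, $0$ belongs to a set of multiples exactly when that set is infinite, which is why the finite alternative must exclude $0$ while the second alternative necessarily contains the infinite piece $\bigcup_i\{a_i(n+k_i)\mid n\in\N\}$ (so $m\geq 1$) and carries $0\in F$. One should also note that $\varnothing$, though vacuously of the first shape, is not realizable, since any nonempty $\Z$-SFT contains a configuration and hence has a stabilizer of the form $p\Z$ for some $p\geq 0$.
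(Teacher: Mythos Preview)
Your proof is correct and follows essentially the same route as the paper's: reduce to a nearest-neighbor SFT, invoke Corollary~\ref{cor:periods} and Lemma~\ref{lem:zero} for necessity, and use the explicit graphs $\Gamma_p$, $\Gamma_{a(n+k)}$ together with Proposition~\ref{prop:props} for sufficiency. If anything you are slightly more careful than the paper about the bookkeeping around $0$ (noting that $0\in M(X_{\Gamma_{a(n+k)}})$, that $m\geq 1$ is forced, and that $P=\varnothing$ is an excluded edge case), which are points the paper glosses over.
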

\begin{proof}
    Recall that for $p\neq 0$, $k\in\N$ and $a\geq 1$ we have that $M(X_{\Gamma_p}) = \{p\}$ and $M(X_{\Gamma_{a(n+k)}})=\{a(n + k) \mid n\in\N\}$. By Proposition~\ref{prop:props}, finite sets without $0$ and sets of the form $F\cup \bigcup_{i=1}^m\{a_i(n + k_i) \mid n\in\N\}$ as in the statement are realizable. 

    Now, consider a realizable set $P\subseteq\N$. If $P$ is finite, by Lemma~\ref{lem:zero} $0\notin P$. If $P$ is infinite, by Corollary~\ref{cor:periods} there exist $m\in\N$, $k_i\in\N$ and finite sets $F, F_i\Subset \N$ such that
    $P = F \cup \bigcup_{i=1}^m (k_i\N\setminus F_i)$. Notice that for every $i\in\{1,...,m\}$ these exists $F'_i\Subset\N$, $k\in\N$ and $a_i\geq 1$ such that $$k_i\N\setminus F_i = F'_i \cup \{a_i(n+k_i) \mid n\in\N\}.$$
    Then, by defining $F' = F\cup\bigcup_{i=1}^m F'_i$ we obtain 
    $$P = F' \cup \bigcup_{i=1}^m\{a_i(n + k_i) \mid n\in\N\}.$$
    where $0\in F'$ by Lemma~\ref{lem:zero}.
\end{proof}

This characterization gives us a computational restriction. Given a set $F\subseteq \N$ we define its corresponding unary language as $un(F) = \{a^n \mid n\in F\}$. Parihk's Theorem for unary languages states that a unary language $L = \{a^n \mid n\in F\}$ is regular if and only $F$ is a \define{semi-linear} set. By the previous theorem, all realizable sets are semi-linear. 

\begin{corollary}
Let $X$ be a $\Z$-SFT. Then, $un(M(X))$ is regular.
\end{corollary}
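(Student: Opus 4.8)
The plan is to derive this immediately from the characterization theorem just proved, which says that every realizable set $P\subseteq\N$ is either finite with $0\notin P$, or of the form $P = F\cup\bigcup_{i=1}^m\{a_i(n+k_i)\mid n\in\N\}$ with $F$ finite. First I would observe that $M(X)$ is a realizable set (by definition of $M$ and of realizability, and using Lemma~\ref{lem:tconj} to pass to a nearest neighbor SFT so that the machinery of tileset graphs and Corollary~\ref{cor:periods} applies). Hence $M(X)$ falls into one of the two cases of the characterization theorem.

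In the finite case there is nothing to do: a finite subset of $\N$ is trivially semi-linear, so $un(M(X))$ is regular by Parikh's theorem for unary languages. In the infinite case, I would note that each set $\{a_i(n+k_i)\mid n\in\N\} = \{a_ik_i + a_i n \mid n\in\N\}$ is a linear set (an arithmetic progression $\{c + dn \mid n\in\N\}$ with $c = a_ik_i$, $d = a_i$), and the finite set $F$ is a finite union of singletons, each of which is linear. Therefore $M(X)$, being a finite union of linear sets, is semi-linear. Invoking Parikh's theorem for unary languages (already quoted in the paragraph preceding the corollary) gives that $un(M(X)) = \{a^n \mid n\in M(X)\}$ is regular.

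There is essentially no obstacle here; the corollary is a one-line consequence of the preceding theorem together with the standard fact that semi-linear unary sets correspond exactly to regular unary languages. The only thing worth spelling out carefully is that the description produced by the theorem is manifestly a semi-linear presentation, so that the hypothesis of Parikh's theorem is met verbatim. One could also remark that the converse direction of the theorem shows all such semi-linear sets actually arise, so the corollary is tight in the sense that $un(M(X))$ ranges over all unary regular languages $L$ with the property that $a^0\in L$ implies $L$ is infinite (and if $a^0\notin L$ then $L$ can be any finite unary language); but this refinement is not needed for the statement as given.
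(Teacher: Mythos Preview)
Your proposal is correct and follows the same approach as the paper: the preceding characterization theorem shows every realizable subset of $\N$ is semi-linear, and Parikh's theorem for unary languages then gives regularity of $un(M(X))$. The paper's own argument is just this one-line observation, so your write-up is slightly more detailed but substantively identical.
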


\subsubsection{Multi-dimensional subshifts}
 
We say two element $u,v\in\Z^2$ are equivalent, which we denote by $v\sim u$, if there exists $\lambda\neq 0$ such that $v = \lambda u$. We call equivalence classes under $\sim$, \define{slopes}, and denote them by $[v]$. We denote the set of all slopes in $\Z^d$ by $S(\Z^d)$. Given a $\Z^d$-SFT $X$, we define its set of slopes as 
$$Sl(X) = \{[v]\in S(\Z^d) \mid \exists x\in X,\ \stab(x) = v\Z\}.$$

Jeandel, Moutot and Vanier showed that the set of slopes of $\Z^2$-SFTs are exactly $\Sigma^0_1$ subsets of $S(\Z^2)$, and that the set of slopes of $\Z^3$-SFTs are exactly $\Sigma^0_2$ subsets of $S(\Z^3)$~\cite{jeandel2020slopes}. There are further restrictions in the case of $\Z^2$ if we encode the stabilizers differently. For $X$ a $\Z^2$-SFT, we define,
\begin{itemize}
	\item the set of \define{full-periods} of $X$ as $\mathfrak{P}(X) = \{n\in\Z \mid \exists x\in X,\ \stab(x)= (n\Z)^2\}$,
	\item the set of \define{1-periods} of $X$ as $\mathfrak{P}_1(X) = \{v\in\Z\times\N\setminus\{(0,0)\} \mid \exists x\in X,\ \stab(x)= v\Z\}$,
	\item the set of \define{horizontal periods} of $X$ as $\mathfrak{P}_h(X) = \{n\in\Z \mid \exists x\in X,\ \stab(x)= k\Z\times\{0\}\}$.
\end{itemize}

As we did for subsets $F\subseteq \N$, we define the language associated to $F'\subseteq \Z\times\N$ as $un(F') = \{a^pb^q\mid (p,q)\in F'\}\cup\{a^pc^q \mid (-p,q)\in F'\}$. Jeandel and Vanier showed that a set $F\subseteq\N$ is the set of full-periods of an SFT if and only if $un(F)\in\textbf{NP}$, that a set $F'\subseteq \Z\times\N$ is the set of 1-periods of an SFT if and only if $un(F')\in\textbf{NSPACE}(n)$, and that $F''\subseteq\N$ is the set of horizontal periods of an SFT if and only if $un(F'')\in\textbf{NSPACE}(n)$~\cite{jeandel2015charact}.\\

Do these restrictions provide a full description of realizable subsets $\G\subseteq\Sub(\Z^2)$? The answer is no. It suffices to take the singleton $\G = \{(p,0)\Z\}$ for any non-trivial $p\in\Z$, which satisfies all the previous conditions but is not realizable by Lemma~\ref{lem:OGPR}. In the next section we will see that this can be taken further: if $\G$ consists exclusively of one dimensional subspaces it is not realizable.

\section{Periodic rigidity}
\label{sec:PeriodicRigidity}

There is a strange phenomenon with regards to aperiodicity in $\Z^2$. As we saw in Lemma~\ref{lem:OGPR}, if there is a configuration with a horizontal period on a $\Z^2$-SFT, there must be a periodic configuration within the SFT. This is part of a larger phenomenon, where no family of subgroups where every subgroup has infinite index is realizable.

\begin{lemma}
	\label{lem:PR_Z2}
	Every weakly aperiodic $\Z^2$-SFT is strongly aperiodic.
\end{lemma}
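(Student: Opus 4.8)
The plan is to show that a $\Z^2$-SFT $X$ which contains a configuration with non-trivial stabilizer must contain a periodic (finite-index stabilizer) configuration, which gives the contrapositive of the statement. So suppose $X$ is a nearest neighbor $\Z^2$-SFT (legitimate by Lemma~\ref{lem:tconj}) and $x \in X$ with $\stab(x) \neq \{1_{\Z^2}\}$; pick a nonzero $v \in \stab(x)$. After applying an automorphism of $\Z^2$ (which conjugates the SFT to another SFT with the same realizable stabilizers up to the automorphism — one can also just invoke that $\Z^2$ has an automorphism taking $v$ to a primitive horizontal-type vector $(p,0)$), we may assume $v = (p,0)$ with $p \geq 1$, i.e. $p\Z \times \{0\} \in \stab(X)$. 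Now I am exactly in the situation of Lemma~\ref{lem:OGPR}: that lemma says that if $p\Z\times\{0\} \in \stab(X)$ then there is $0 < q \leq |A|^p + 1$ with $p\Z \times q\Z \in \stab(X)$. Such a configuration has finite-index stabilizer, hence is periodic, so $X$ is not weakly aperiodic.

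Concretely the steps are: (1) reduce to a nearest neighbor SFT via conjugacy and Lemma~\ref{lem:tconj}; (2) take a configuration $x$ with a nontrivial period $v$, and use a change of basis of $\Z^2$ sending $v/\gcd$ to $(1,0)$ (so $v \mapsto (p,0)$ for $p = $ the ``content'' of $v$), noting that this linear map induces a conjugacy between SFTs and carries stabilizers to their images; (3) invoke Lemma~\ref{lem:OGPR} to produce a configuration $y \in X$ with $\stab(y) = p\Z\times q\Z$, which is finite index; (4) conclude $X$ has a periodic point, so $X$ is not weakly aperiodic — equivalently, every weakly aperiodic $\Z^2$-SFT is strongly aperiodic.

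The only slightly delicate point — and the main thing to get right — is the basis-change argument: one must check that precomposition with a linear automorphism $M \in \mathrm{GL}_2(\Z)$ sends a $\Z^2$-SFT to a $\Z^2$-SFT (the forbidden pattern supports get transported by $M$, still finite) and that it is a conjugacy intertwining the shift actions up to the automorphism $M$ of $\Z^2$, so that $\stab(x \circ M^{-1}) = M(\stab(x))$. Given $v \neq 0$, writing $v = d\cdot v_0$ with $v_0$ primitive and $d \geq 1$, there is $M \in \mathrm{GL}_2(\Z)$ with $M v_0 = (1,0)$, hence $M v = (d,0)$; then $x \circ M^{-1}$ has $(d,0)$ as a period, and Lemma~\ref{lem:OGPR} applies with $p = d$. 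Everything else is a direct citation of Lemma~\ref{lem:OGPR}. An alternative that avoids the change of basis entirely: redo the pigeonhole argument of Lemma~\ref{lem:OGPR} directly along the line $v\Z$ — the translate $x|_{\Z v_0}$ is eventually periodic, and one finds a second independent period by a pigeonhole on the ``width-$d$ strips'' transverse to $v$ — but invoking Lemma~\ref{lem:OGPR} after a coordinate change is cleaner.
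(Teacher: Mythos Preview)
Your argument is correct, but it goes a genuinely different route from the paper. The paper works directly with an arbitrary period vector $v=(p,q)$: it looks at the strip $\Z\times\{0,\dots,q-1\}$, observes that the $v$-periodicity forces the stacked strips to be translates of one another, and then pigeonholes on width-$|p|$ blocks inside that strip to produce a doubly periodic configuration. No coordinate change and no appeal to Lemma~\ref{lem:OGPR}. Your approach instead straightens $v$ to a horizontal vector via some $M\in\mathrm{GL}_2(\Z)$ and then recycles Lemma~\ref{lem:OGPR}. This is economical in that it reuses an earlier lemma, at the cost of the (easy but necessary) check that $x\mapsto x\circ M^{-1}$ carries SFTs to SFTs and satisfies $\stab(x\circ M^{-1})=M(\stab(x))$; the paper's version is self-contained and is in fact exactly the ``alternative'' you describe at the end.

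One small gap to tighten: the \emph{statement} of Lemma~\ref{lem:OGPR} assumes $p\Z\times\{0\}\in\stab(X)$, i.e.\ that some configuration has stabilizer \emph{equal} to $p\Z\times\{0\}$, whereas from $v\in\stab(x)$ you only get $p\Z\times\{0\}\subseteq\stab(x)$ after the basis change. The fix is a one-line case split: if $\stab(x)$ has rank~$2$ you already have a periodic point and are done; if it has rank~$1$ then after the basis change it is exactly $p'\Z\times\{0\}$ for some $p'\mid p$, and Lemma~\ref{lem:OGPR} applies with $p'$. (Alternatively, note that the \emph{proof} of Lemma~\ref{lem:OGPR} only uses that $(p,0)$ is a period, not that it generates the whole stabilizer.)
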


\begin{proof}
	Let $X\subseteq A^{\Z^2}$ be a weakly aperiodic nearest neighbor SFT on $\Z^2$. If $X$ is not strongly aperiodic, there exists a configuration $x\in X$ stabilized by a non-trivial element $v =(p,q)\in\Z^2$. Suppose without loss of generality that $q > 0$ and consider the portion of the plane $P$ given by the strip $\Z\times\{0,...,q-1\}$. Because $x$ is stabilized by $v$ we have $x|_P = x|_{v + P}$. Now, if we cut $P$ into blocks of width $|p|$, and look at their tiling $B_i = x|_{\{i, ..., i+|p|-1\}\times\{0,...,q-1\}}$, there must exist $i_1$ and $i_2$ such that $B_{i_1} = B_{i_2}$ as the alphabet is finite. Define $B = x|_{\{i_1, ..., i_2-1\}\times\{0,...,q-1\}}$ and the configuration $y\in A^{\Z^2}$ which contains the bi-infinite repetition of $B$ on the strip $P$, and is completed by stacking strips with the appropriate shift by a multiple of $p$ (see Figure~\ref{fig:PR_Z2}). Because $X$ is a nearest neighbor SFT, $y$ belong to $X$ and is stabilized by the subgroup $(i_2 - i_1)\Z\times p\Z$. This contradicts the fact that $X$ is weakly aperiodic.
\end{proof}
	\begin{figure}[H]
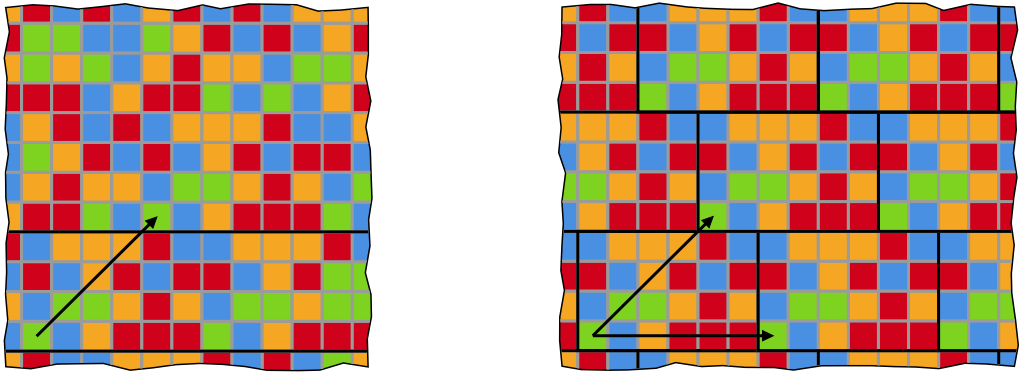

		\centering
		\definecolor{blue}{rgb}{0,0,1}
		\definecolor{red}{rgb}{1,0,0}
		\begin{subfigure}{0.45\textwidth}
			\includestandalone[scale=1.5]{figures/PR_Z2_1}
		\end{subfigure}
		\begin{subfigure}{0.45\textwidth}
			\includestandalone[scale=1.5]{figures/PR_Z2_2}
		\end{subfigure}
		\caption{For SFTs in $\Z^2$, having a configuration with non-trivial stabilizer (on the left) implies the existence of a periodic configuration (on the right). This is done by finding a repeating motif on the strip defined by the period vector, and repeating this motif in a way compatible with the forbidden patterns of the nearest neighbor SFT.}
		\label{fig:PR_Z2}
	\end{figure}
For which other groups does this hold? Although there have been examples of groups which have weakly aperiodic SFTs that are not strongly aperiodic in the past, for $\Z^d$ with $d\geq 3$ for example~\cite{culik1995aperiodic}, the first explicit construction is due to Moutot and Esnay for Baumslag-Solitar groups~\cite{esnay2022weakly}. In this section we study necessary and sufficient conditions for groups to exhibit this behavior.

\begin{definition}
	We say a group $G$ is \define{periodically rigid} if every weakly aperiodic $G$-SFT is strongly aperiodic.
\end{definition}

This is equivalent to saying that the only non-empty SFT $X$ such that all stabilizers have infinite index, are those such that $\stab(X)=\{1\}$. In particular, if $G$ is periodically rigid, then no infinite index non-trivial subgroup is realizable. By Lemma~\ref{lem:PR_Z2}, $\Z^2$ is periodically rigid. In addition, by vacuity, all virtually $\Z$ groups are periodically rigid.

Although not with our terminology, Pytheas-Fogg posed the following question.

\begin{question}[\cite{pytheas2022conjecture}]
	Is a finitely generated group periodically rigid if and only if it is either virtually $\Z$ or virtually~$\Z^2$?
\end{question}

It has already been shown that some classes of groups admit weakly but not strongly aperiodic SFTs. These are the following:
\begin{itemize}
	\item $\Z^d$ for $d > 2$,
	\item Baumslag-Solitar groups $BS(m,n)$ with $|n|,|m|\neq 1$~\cite{esnay2022weakly},
	\item Free groups, \cite{piantodosi2008free},
	\item Hyperbolic groups~\cite{gromov1987hyperbolic,coornaert2006symbolic},
	\item Groups with two or more ends~\cite{cohen2017large},
	\item the Lamplighter group~\cite{cohen2020lamplighters},
\end{itemize}

Let us establish some inheritance properties of periodically rigid groups.

\begin{proposition}
	\label{prop:virt_PR}
	Take $G$ a torsion-free group, and $H\leq G$ a finite index subgroup. If $H$ is periodically rigid, then $G$ is periodically rigid.
\end{proposition}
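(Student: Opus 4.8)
The plan is to argue by contrapositive: assume $G$ is not periodically rigid and produce a weakly aperiodic $H$-SFT that is not strongly aperiodic. So suppose $X \subseteq A^G$ is a weakly aperiodic $G$-SFT which is \emph{not} strongly aperiodic. Then there is a configuration $x \in X$ whose stabilizer $\stab(x)$ is a nontrivial subgroup of infinite index (infinite index because $X$ is weakly aperiodic, hence has no periodic points). The natural move is to transport $X$ down to $H$ via the higher power / higher block construction from Section~\ref{subsec:higher}: since $[G:H] < \infty$, pick a set $R$ of right coset representatives and form the $H$-SFT $X^{[R]}$ over alphabet $A^R$. By the lemmas on higher powers, $X^{[R]}$ is a nonempty $H$-SFT, and it is an SFT by Lemma~\ref{lem:patterns_higher_block}.

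The two things to check are that $X^{[R]}$ is weakly aperiodic and that it is \emph{not} strongly aperiodic. For the latter: the configuration $x$ restricted via the higher-power encoding gives a configuration $\tilde x \in X^{[R]}$ with $\tilde x(h)(r) = x(hr)$; its $H$-stabilizer is exactly $\stab(x) \cap H$ (an element $h \in H$ fixes $\tilde x$ iff $x(hr) = x(r')$ appropriately, iff $h \in \stab(x)$). This intersection is nontrivial — and here is where torsion-freeness of $G$ enters: $\stab(x)$ is a nontrivial subgroup of $G$, so it contains an element $g \neq 1_G$ of infinite order (automatic since $G$ is torsion-free), and since $H$ has finite index, $g^{[G:H]!} \in H$ (or more simply some positive power of $g$ lies in $H$), and this power is still nontrivial because $g$ has infinite order. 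Hence $\stab(x) \cap H \neq \{1_H\}$, so $\tilde x$ witnesses that $X^{[R]}$ is not strongly aperiodic.

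For weak aperiodicity of $X^{[R]}$: suppose for contradiction some $\tilde y \in X^{[R]}$ had finite $H$-stabilizer index, i.e.\ $[H : \stab_H(\tilde y)] < \infty$. Unfolding $\tilde y$ back to the corresponding $y \in X$ with $y(hr) = \tilde y(h)(r)$, the $H$-stabilizer of $\tilde y$ equals $\stab(y) \cap H$, so $[H : \stab(y)\cap H] < \infty$; combined with $[G:H] < \infty$ this forces $[G : \stab(y)] < \infty$, i.e.\ $y$ is a periodic point of $X$, contradicting weak aperiodicity of $X$. Therefore $X^{[R]}$ is weakly aperiodic but not strongly aperiodic, so $H$ is not periodically rigid, completing the contrapositive.

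**Main obstacle.** The routine bookkeeping is in making the stabilizer identification $\stab_H(\tilde x) = \stab(x)\cap H$ precise under the higher-power encoding — one has to be careful that the construction uses \emph{right} coset representatives while the shift action is on the left, so the correct statement is about the left action of $H \leq G$ on configurations restricted to the $H$-orbit structure; I would verify it directly from Definition~\ref{def:higher_power} by checking $h \cdot \tilde x = \tilde x$ unwinds to $x(h^{-1}hr) $-type identities equivalent to $h \in \stab(x)$. The one genuinely substantive point — not mere bookkeeping — is the use of torsion-freeness to guarantee that passing to a finite-index subgroup does not kill the nontrivial period: without it, $\stab(x)$ could be a nontrivial finite subgroup meeting $H$ trivially, and the argument would break. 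This is exactly why the hypothesis is stated for torsion-free $G$.
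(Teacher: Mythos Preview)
Your proposal is correct and follows essentially the same route as the paper: both pass to the $R$-higher power $X^{[R]}$, use torsion-freeness of $G$ to push a nontrivial period $g$ into $H$ via a power $g^n$, and use the stabilizer comparison between $X$ and $X^{[R]}$ to relate periodicity on the two sides. The only cosmetic difference is that you frame it as a contrapositive (directly verifying $X^{[R]}$ is weakly but not strongly aperiodic), whereas the paper frames it as a contradiction (assuming $H$ periodically rigid, deducing $X^{[R]}$ has a periodic point, and pulling that back to a periodic point of $X$); the content is the same. Your claimed equality $\stab_H(\tilde x)=\stab_G(x)\cap H$ is in fact correct and slightly sharper than the inclusion $\stab(y)\subseteq\stab(x)$ the paper records, but either suffices.
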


\begin{proof}
	Suppose there exists $X\subseteq A^G$ a weakly but not strongly aperiodic $G$-SFT. For a set of right coset representatives $R$, take the $R$-higher power shift $X^{[R]}$. By Lemma \ref{lem:patterns_higher_block}, $X^{[R]}$ is an $H$-SFT. Furthermore, if we take $y\in X^{[R]}$ and its corresponding configuration $x\in X$, we have that $\stab(y)\subseteq\stab(x)$.
	
	Now, because $X$ is not strongly aperiodic, there exists $g\in G\setminus\{1_G\}$ and $x\in X$ such that $g\in\stab(x)$. As $H$ is of finite index, and $G$ is torsion-free there exists $n \geq 1$ such that $g^n \in H\setminus\{1_G\}$. Define $y\in X^{[R]}$ by $y(h)(r) = x(hr)$ for all $h\in H$ and $r\in R$. Then,
	\begin{align*}
		(g^n\cdot y)(h)(r) &= y(g^{-n}h)(r) = x(g^{-n}hr)\\
		&= x(hr) = y(h)(r),
	\end{align*}
	and thus $g^{n}\in\stab(y)$. Because $H$ is periodically rigid, this means that there exists $z\in X^{[R]}$ such that $\stab(z)$ has finite index in $H$. If we denote $x\in X$ the configuration such that $z(h)(r) = x(hr)$, $\stab(x)$ contains a finite index subgroup and is therefore a finite index subgroup itself. Thus, $x$ is a periodic configuration of $X$. This is a contradiction, as $X$ was supposed to be weakly periodic.
\end{proof}

\begin{example}
	The fundamental group of the Klein bottle, which is given by 
	$$\pi_1(K) = BS(1,-1) = \langle a,b \mid abab^{-1}\rangle,$$
	is torsion-free virtually $\Z^2$ and therefore periodically rigid by the previous proposition. 
\end{example}

This last example shows Pytheas-Fogg's question is incomplete, and allows us to state the following conjecture.
\begin{conjecture}[Conjecture~\ref{intro:conjetura}]
	\label{conj:periodic_rigid}
	A finitely generated group is periodically rigid if and only if it is either virtually $\Z$ or torsion-free virtually $\Z^2$.
\end{conjecture}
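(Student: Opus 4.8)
The plan is to prove the two implications separately. The forward direction requires nothing new: if $G$ is virtually $\Z$ it is commensurable to $\Z$, and since every non-empty $\Z$-SFT contains a periodic point, $\Z$ admits no weakly aperiodic SFT, so by the commensurability invariance of Proposition~\ref{prop:WA} neither does $G$ and $G$ is vacuously periodically rigid; if $G$ is torsion-free and virtually $\Z^2$ then $\Z^2$ sits in $G$ with finite index, $\Z^2$ is periodically rigid by Lemma~\ref{lem:PR_Z2}, and Proposition~\ref{prop:virt_PR} carries this up to $G$ (this already disposes of the Klein bottle group).

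All the work is in the converse: every finitely generated $G$ that is neither virtually $\Z$ nor torsion-free virtually $\Z^2$ should admit a weakly aperiodic SFT that is not strongly aperiodic. The main lever is Proposition~\ref{prop:quotient}. If $G$ has a non-trivial finitely generated normal subgroup $N$ for which $G/N$ admits a weakly aperiodic SFT $X$, then $\pi^*(X)$ is a non-empty $G$-SFT whose stabilizers are exactly the subgroups $\pi^{-1}(H) = \rho(H)N$ for $H \in \stab(X)$; each such $H$ has infinite index in $G/N$, hence $\pi^{-1}(H)$ has infinite index in $G$ and $\pi^*(X)$ is weakly aperiodic, while each $\pi^{-1}(H)$ contains the non-trivial group $N$, so $\pi^*(X)$ is not strongly aperiodic. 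Thus it suffices to find such an $N$, and when $G/N$ even carries a strongly aperiodic SFT this is Theorem~\ref{thm:quotient_ap}. The first step is therefore to produce $N$ from the structure of $G$ together with the validated cases of Conjectures~\ref{conj:SA} and~\ref{conj:CarrollPenland}: for instance a finitely generated torsion-free nilpotent group of Hirsch length $\geq 3$ has non-trivial center, a rank-one summand of which is a normal copy of $\Z$, and the quotient is a virtually polycyclic group of Hirsch length $\geq 2$, hence one-ended with decidable word problem and so admits a strongly aperiodic SFT. The second step is to propagate non-periodic-rigidity along finite-index inclusions and suitable overgroups — a dual of Proposition~\ref{prop:virt_PR}, obtained by free-extending the witnessing SFT as in Lemma~\ref{lem:free_extension} and checking the relevant stabilizers — so that for virtually nilpotent and virtually polycyclic groups the problem collapses, via the classification of groups of small Hirsch length and of two-dimensional crystallographic groups, to a short explicit list; this is how I expect Theorems~\ref{intro:vnilp} and~\ref{intro:poly} to be reached.

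The hard part — and the reason the general statement stays a conjecture — is exactly the groups on which these quotient methods are powerless, because every non-trivial finitely generated normal subgroup either has finite index or has a quotient that is virtually $\Z$ or finite. Even within the polycyclic world this already covers the virtually $\Z^2$ groups with torsion (the fifteen non-torsion-free wallpaper groups and their higher-rank analogues) and the $\mathrm{Sol}$-type groups $\Z^2 \rtimes_M \Z$ with $M$ hyperbolic, whose only infinite-index normal subgroups are $\{1\}$, $\Z^2$ and its finite-index sublattices, all with virtually $\Z$ or virtually finite quotients. For these one must instead build a weakly aperiodic SFT directly, starting from a strongly aperiodic $\Z^2$-SFT, inducing or free-extending it to $G$, and adjoining a small set of forbidden patterns that encodes a controlled defect invariant under one prescribed period — a reflection for the torsion groups, a stripe along one $\Z^2$-direction (together with its $M$-conjugates) for $\mathrm{Sol}$ — in such a way that the defect can never be completed to a periodic configuration and every orbit remains infinite. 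Verifying such constructions uniformly, and, beyond the polycyclic setting, coping with groups of undecidable word problem (which by Theorem~\ref{thm:jeandel} admit no strongly aperiodic quotient) and with infinite simple or just-infinite groups (too few normal subgroups for any quotient argument at all), is where I expect the genuine obstacle to lie.
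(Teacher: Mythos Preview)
This statement is a conjecture, not a theorem; the paper does not prove it in general, and you correctly identify where the genuine obstruction lies. Your treatment of the easy direction (virtually $\Z$ vacuously, torsion-free virtually $\Z^2$ via Lemma~\ref{lem:PR_Z2} and Proposition~\ref{prop:virt_PR}) is exactly what the paper does.

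For the converse in the nilpotent and polycyclic cases, however, you are missing the paper's second lever, and this leads you to overstate the difficulty of two families you flag as ``hard.'' You rely almost exclusively on the pull-back from a quotient (Proposition~\ref{prop:quotient} / Lemma~\ref{lem:PR_quotient}): find a finitely generated normal $N$ with $G/N$ admitting a weakly aperiodic SFT. You then correctly observe that this fails for the non-torsion-free virtually $\Z^2$ groups and for the Sol-type groups $\Z^2\rtimes_M\Z$ with $M$ hyperbolic, and propose bespoke direct constructions for each. The paper instead handles both via the \emph{free extension} together with Barbieri's description of its free elements (Theorem~\ref{thm:freeX}). Concretely: Proposition~\ref{prop:PR_torsion-free} says that if $G$ contains a torsion-free subgroup $H$ admitting a weakly aperiodic SFT and some torsion element $g\notin H$, then the free extension $\upa{X}$ is weakly but not strongly aperiodic --- this disposes of every virtually $\Z^2$ group with torsion in one stroke, no wallpaper-group case analysis needed. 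Lemma~\ref{lem:PR_exact} says that if $1\to N\to G\to H\to 1$ with $N$ admitting a weakly aperiodic SFT and $H$ containing a torsion-free element, then again $\upa{X}$ witnesses non-rigidity --- this handles $\Z^2\rtimes_M\Z$ immediately, taking $N=\Z^2$ and $H=\Z$. The paper's inductions in Theorems~\ref{thm:PR_nil} and~\ref{thm:PR_poly} alternate between the pull-back lemma and these two free-extension results, which is why they close without any ad hoc SFT building. Your plan would work, but replacing the proposed direct constructions with Theorem~\ref{thm:freeX} is both shorter and what the paper actually does.
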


\begin{remark}
	Notice that the previous conjeture implies Conjecture~\ref{conj:CarrollPenland} concerning weakly aperiodic SFTs. Indeed, if there existed a non-virtually $\Z$ group that does not admit a weakly aperiodic SFT, it would be periodically rigid.
\end{remark}

What can we say about the periodic rigidity of a group, from the periodic rigidity of its subgroups or quotients? To answer this question, we use of a result by Barbieri, that links stabilizing elements in the free-extension of a shift to the stabilizing elements of the shift. Let us introduce some notation. Given an element $g\in G$, we define its \define{conjugacy class} as
$$\Cl(g) = \{tgt^{-1} \mid t\in G\}.$$
Next, we define the set of \define{roots} of a subgroup $K\leq G$ as
$$R_G(K) = \{g\in G \mid \exists n\in\N, g^n\in K\}.$$
Finally, given a $G$-subshift $X$, we define the set of \define{free elements} of the group action as
$$\Free(X) = G\setminus \bigcup_{x\in X}\stab(x) = \{g\in G \mid g\cdot x \neq x, \ \forall x\in X\}.$$

With all these elements at hand, we state Barbieri's result that characterizes how the stabilizers of the free extension of a subshift behave.  It also holds for non-finitely generated groups.
\begin{theorem}[\cite{barbieri2023aperiodic_non_fg}]
	\label{thm:freeX}
	Take a group $G$, a subgroup $H\leq G$, and an $H$-subshift $X$. Then, $g\in\Free(\upa{X})$ if and only if $\Cl(g)\cap R_{G}(\Free(X))\neq \varnothing$.
\end{theorem}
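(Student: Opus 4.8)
I would prove both implications by contraposition; the forward implication is a short consequence of Lemma~\ref{lem:free_extension}(3), while the reverse implication is an explicit construction and is where the real work lies. Throughout I use that $\Free(X)\subseteq H$ and the coset description of $\upa X$ (a configuration lies in $\upa X$ iff its restriction to every left coset $bH$ belongs to $X$).

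\emph{From $g\notin\Free(\upa X)$ to $\Cl(g)\cap R_G(\Free(X))=\varnothing$.} Suppose some $y\in\upa X$ satisfies $g\cdot y=y$, and suppose toward a contradiction that some $s\in\Cl(g)$ and $n\in\N$ have $s^n\in\Free(X)$. Writing $s=tgt^{-1}$ we get $s^n=tg^nt^{-1}=:h$, which lies in $H$ since $\Free(X)\subseteq H$. From $g\cdot y=y$ we get $g^n\cdot y=y$, so the translate $z:=t\cdot y$ — still in $\upa X$, as $\upa X$ is a $G$-subshift by Lemma~\ref{lem:free_extension}(1) — satisfies $h\cdot z=(tg^nt^{-1}t)\cdot y=t\cdot(g^n\cdot y)=z$. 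Since $z\in\upa X$ we have $z|_H\in X$, and Lemma~\ref{lem:free_extension}(3) gives $\stab(z)\cap H\subseteq\stab(z|_H)$, hence $h\in\stab(z|_H)$, so $h\notin\Free(X)$, contradicting $s^n=h\in\Free(X)$. Thus no power of a conjugate of $g$ lies in $\Free(X)$, i.e. $\Cl(g)\cap R_G(\Free(X))=\varnothing$.

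\emph{From $\Cl(g)\cap R_G(\Free(X))=\varnothing$ to $g\notin\Free(\upa X)$.} Assume $X\neq\varnothing$ (the empty case follows directly from Lemma~\ref{lem:free_extension}(2)). I will construct $y\in\upa X$ with $g\cdot y=y$. Consider the left action of $\langle g\rangle$ on the set of left cosets $G/H$, and fix a representative $a_o$ for each orbit $o$. For a fixed $o$ the set $\{k\in\Z\mid a_o^{-1}g^ka_o\in H\}$ is a subgroup $m_o\Z$ of $\Z$, and $a_oH,ga_oH,\dots,g^{m_o-1}a_oH$ (resp. all $g^ka_oH$, $k\in\Z$, when $m_o=0$) enumerate the orbit without repetition. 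If $m_o\geq 1$, put $h_o:=a_o^{-1}g^{m_o}a_o\in H$; since $h_o=(a_o^{-1}ga_o)^{m_o}$ is a power of the conjugate $a_o^{-1}ga_o\in\Cl(g)$, the hypothesis forces $h_o\notin\Free(X)$, so there is $x_o\in X$ with $h_o\cdot x_o=x_o$. If $m_o=0$, choose any $x_o\in X$. Now each $w\in G$ lies in a unique coset $g^ka_oH$ and is written uniquely as $w=g^ka_oh$ with $h\in H$ (and $0\le k<m_o$ when $m_o\geq1$); define $y(w):=x_o(h)$.

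\emph{Verification and main difficulty.} For any $b\in G$, writing $b=g^ka_oh_b$, we get $y(bh)=x_o(h_bh)$ for all $h\in H$, so $y|_{bH}=h_b^{-1}\cdot x_o\in X$ by shift-invariance of $X$; hence $y\in\upa X$. For $g$-invariance, $(g\cdot y)(g^ka_oh)=y(g^{k-1}a_oh)$, which equals $x_o(h)=y(g^ka_oh)$ whenever $k\geq1$ or $m_o=0$; the only remaining case is the seam $k=0$ of a finite orbit, where $g^{-1}a_o=g^{m_o-1}a_oh_o^{-1}$ gives $y(g^{-1}a_oh)=x_o(h_o^{-1}h)$, so the required identity reduces exactly to $x_o(h_o^{-1}h)=x_o(h)$, i.e. $h_o\cdot x_o=x_o$, which holds by construction. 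Therefore $g\cdot y=y$ with $y\in\upa X$, so $g\notin\Free(\upa X)$. The substantive point — and the place the hypothesis is actually used — is precisely this cyclic gluing constraint around each finite $\langle g\rangle$-orbit in $G/H$, which is dissolved by the $h_o$-fixed configuration $x_o$ extracted from $\Cl(g)\cap R_G(\Free(X))=\varnothing$; the rest is bookkeeping, with the usual care needed to keep left and right cosets straight.
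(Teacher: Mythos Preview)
The paper does not contain a proof of this theorem: it is quoted from \cite{barbieri2023aperiodic_non_fg} and used as a black box, so there is no ``paper's own proof'' to compare against. Your argument is correct and is essentially the natural one (decompose $G/H$ into $\langle g\rangle$-orbits, and on each finite orbit use the hypothesis to kill the single cyclic gluing constraint). One cosmetic point: your introductory sentence swaps ``forward'' and ``reverse'' --- the short argument using Lemma~\ref{lem:free_extension}(3) is the contrapositive of the implication $\Cl(g)\cap R_G(\Free(X))\neq\varnothing\Rightarrow g\in\Free(\upa X)$, while the construction handles the other direction --- but both implications are proved and the mathematics is sound.
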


As a consequence of Lemma~\ref{lem:free_extension}, the free extension of a weakly aperiodic SFT is weakly aperiodic. We will use the previous theorem to determine when the free extension is not strongly aperiodic, to find properties of periodically rigid groups.

\begin{proposition}
	\label{prop:PR_torsion-free}
	Let $G$ be a finitely generated group with a torsion-free subgroup $H\leq G$ that admits a weakly aperiodic SFT, and $g\in G\setminus H$ with torsion. Then, $G$ is not periodically rigid.
\end{proposition}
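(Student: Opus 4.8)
The plan is to use the free extension. Let $X$ be a non-empty weakly aperiodic $H$-SFT, which exists by hypothesis (recall that by convention the empty subshift does not count as aperiodic), and let $\upa{X}$ be its free extension to $G$. I claim that $\upa{X}$ witnesses that $G$ is not periodically rigid.

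First I would check that $\upa{X}$ is a non-empty weakly aperiodic $G$-SFT. Writing $X = \X^H_{\Fo}$ for a finite set $\Fo$ of patterns with supports in $H$, Lemma~\ref{lem:free_extension} gives $\upa{X} = \X^G_{\Fo}$ (so it is a $G$-SFT) and that $\upa{X}$ is non-empty since $X$ is. It is weakly aperiodic by the remark recalled just above this proposition (concretely: if some $x \in \upa{X}$ were periodic, then $\stab(x) \cap H$ would have finite index in $H$ and, by the third item of Lemma~\ref{lem:free_extension}, be contained in $\stab(x|_H)$, forcing $x|_H \in X$ to be periodic, a contradiction). So it only remains to exhibit a configuration of $\upa{X}$ with non-trivial stabilizer.

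The key step is to apply Barbieri's criterion (Theorem~\ref{thm:freeX}) to the torsion element $g$ and deduce $g \notin \Free(\upa{X})$, i.e.\ that $\Cl(g) \cap R_G(\Free(X)) = \varnothing$. Fix $n \geq 1$ with $g^n = 1_G$. Since $X$ is an $H$-subshift, $\Free(X) \subseteq H$, and $1_G$ lies in every stabilizer, so $1_G \notin \Free(X)$; hence $\Free(X) \subseteq H \setminus \{1_G\}$. Suppose for a contradiction that some conjugate $tgt^{-1}$ lies in $R_G(\Free(X))$. Then $tg^{m}t^{-1} = (tgt^{-1})^{m} \in \Free(X) \subseteq H \setminus \{1_G\}$ for some $m \geq 1$. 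But $(tg^{m}t^{-1})^{n} = tg^{mn}t^{-1} = t(g^{n})^{m}t^{-1} = 1_G$, so $tg^{m}t^{-1}$ is a non-trivial torsion element of $H$, contradicting the torsion-freeness of $H$. Therefore $\Cl(g) \cap R_G(\Free(X)) = \varnothing$, and Theorem~\ref{thm:freeX} yields $g \notin \Free(\upa{X})$; that is, there exists $x \in \upa{X}$ with $g \cdot x = x$. Since $g \in G \setminus H$ we have $g \neq 1_G$, so $\stab(x) \neq \{1_G\}$ and $\upa{X}$ is not strongly aperiodic. Hence $\upa{X}$ is a weakly but not strongly aperiodic $G$-SFT, so $G$ is not periodically rigid.

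I expect no serious obstacle. The only points needing care are ensuring $\Free(X) \subseteq H$ — so that the torsion-free hypothesis on $H$ can actually be brought to bear on $tg^{m}t^{-1}$ — and checking that passing to the free extension preserves non-emptiness, the SFT property, and weak aperiodicity, all of which are immediate from Lemma~\ref{lem:free_extension}. An alternative to invoking Theorem~\ref{thm:freeX} would be to construct a $g$-fixed configuration in $\upa{X}$ directly, extending a single configuration of $X$ across the $\langle g\rangle$-orbits of cosets in $G/H$, but routing through Barbieri's theorem avoids that coset bookkeeping.
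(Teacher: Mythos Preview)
Your proposal is correct and follows essentially the same approach as the paper: take the free extension $\upa{X}$ of a weakly aperiodic $H$-SFT, use Barbieri's criterion (Theorem~\ref{thm:freeX}) together with $\Free(X)\subseteq H\setminus\{1_G\}$ and the torsion-freeness of $H$ to show $\Cl(g)\cap R_G(\Free(X))=\varnothing$, and conclude that $\upa{X}$ is weakly but not strongly aperiodic. Your write-up is in fact a bit more careful than the paper's, spelling out why $\upa{X}$ is a non-empty SFT and why $g\neq 1_G$.
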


\begin{proof}
	As $H$ is torsion-free we have that for all $m\in \N$, $g^{m}\notin H\setminus\{1\}$, as every power of an element with torsion has torsion. Furthermore, for all $t\in G$ and $m\in\N$, $tg^mt^{-1}\notin H\setminus\{1\}$, as every conjugate of an element with torsion has torsion. Because  $\Free(X)\subseteq H\setminus\{1\}$, we arrive at $\Cl(g)\cap R_{G}(\Free(X)) = \varnothing$. Then, by Theorem \ref{thm:freeX}, $g\notin\Free(\upa{X})$. As $X$ being weakly aperiodic implies $\upa{X}$ is weakly aperiodic, but $\Free(\upa{X})\neq G\setminus\{1\}$, $G$ is not periodically rigid.
\end{proof}

\begin{proposition}
	Let $G_1$ be an infinite finitely generated group that admits a weakly aperiodic SFT. If $G_2$ is another finitely generated group, then $G_1\rtimes G_2$ is not periodically rigid.
\end{proposition}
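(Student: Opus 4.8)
The plan is to reduce the claim to Proposition~\ref{prop:PR_torsion-free} together with the free extension machinery, by treating two cases depending on whether $G_1 \rtimes G_2$ has torsion outside a suitable copy of $G_1$. The key observation is that $G_1$ embeds as a normal subgroup of $G := G_1 \rtimes G_2$, and $G_1$ admits a weakly aperiodic SFT $X$; hence its free extension $\upa{X}$ is a weakly aperiodic $G$-SFT by Lemma~\ref{lem:free_extension}. To show $G$ is not periodically rigid, it suffices to exhibit some $g \in G\setminus\{1_G\}$ with $g \notin \Free(\upa{X})$, i.e. $g$ fixes some configuration of $\upa{X}$; by Theorem~\ref{thm:freeX} this amounts to showing $\Cl(g) \cap R_G(\Free(X)) = \varnothing$, where $\Free(X) \subseteq G_1\setminus\{1\}$.

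First I would dispose of the case where $G_2$ has a nontrivial torsion element $g \in G_2\setminus\{1\}$. Since $g$ has finite order, every conjugate $tg^mt^{-1}$ has finite order, so no such element can lie in a torsion-free subgroup. If $G_1$ is moreover torsion-free, Proposition~\ref{prop:PR_torsion-free} applies verbatim (with $H = G_1$) and we are done. If $G_1$ has torsion, I would argue more directly: any $g$ in $G_2$ of finite order has all its conjugates $tg^mt^{-1}$ lying in cosets $G_1 g'$ for torsion elements $g'$ of $G_2$ under the projection $G \twoheadrightarrow G_2$, so in particular no conjugate power of $g$ can be a nontrivial element of $G_1$ (which is the whole point, since $\Free(X) \subseteq G_1$); hence $\Cl(g) \cap R_G(\Free(X)) = \varnothing$ and Theorem~\ref{thm:freeX} gives $g \notin \Free(\upa{X})$.

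The remaining, and I expect main, case is when $G_2$ is torsion-free (or when we cannot find torsion in $G_2$). Here I would instead use an element $g \in G_2\setminus\{1\}$ of infinite order, and exploit the semidirect-product structure: the projection $\pi: G \to G_2$ sends any conjugate $tg^mt^{-1}$ to a nontrivial element of $G_2$ (as $g^m \neq 1$ and $G_2$ is torsion-free, and conjugation commutes with $\pi$), hence every conjugate of every nonzero power of $g$ lies outside $\ker\pi = G_1$. Since $\Free(X) \subseteq G_1\setminus\{1\}$ and $R_G(\Free(X))$ consists of elements some power of which lands in $\Free(X) \subseteq G_1$, I claim $g$ (and all of $\Cl(g)$) avoids $R_G(\Free(X))$: if $tg^mt^{-1}$ had a power in $G_1$, then $\pi(tg^mt^{-1})$ would be a torsion element of $G_2$, forcing $\pi(tg^mt^{-1}) = 1$, contradiction. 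Therefore $\Cl(g)\cap R_G(\Free(X)) = \varnothing$, Theorem~\ref{thm:freeX} yields $g \notin \Free(\upa{X})$, and since $\upa{X}$ is weakly aperiodic but not strongly aperiodic (its free action fails at $g$), $G$ is not periodically rigid.

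The one subtlety I would be careful about is the degenerate situation $G_2 = \{1\}$, where $G_1 \rtimes G_2 = G_1$ and the statement would just say $G_1$ is not periodically rigid — which need not hold (e.g. $G_1 = \Z^2$). I would therefore either assume $G_2$ nontrivial throughout or note that the interesting content is for nontrivial $G_2$; with $G_2 \neq \{1\}$, the case split above is exhaustive since a nontrivial $G_2$ contains either a torsion element or an infinite-order element, and in both cases I produce the required non-free element $g$. The bookkeeping with $R_G$ and conjugacy classes is the part most prone to error, so I would state explicitly the two facts used: (i) $\pi(\Cl(g)) \subseteq \Cl_{G_2}(\pi(g))$, and (ii) if $g$ has infinite order in $G_2$ then so does every conjugate of every nonzero power, so such elements never enter $\ker\pi$ even after taking roots.
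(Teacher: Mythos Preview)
Your approach is essentially the paper's: take the free extension $\upa{X}$ of a weakly aperiodic $G_1$-SFT, pick $g\in G_2\setminus\{1\}$, and use Theorem~\ref{thm:freeX} by showing $\Cl(g)\cap R_G(\Free(X))=\varnothing$. However, your case split on torsion in $G_2$ is unnecessary. The paper dispatches both cases at once via normality of $G_1$ in $G=G_1\rtimes G_2$ (equivalently, your projection $\pi:G\to G_2$): for any $t\in G$ and $n\geq 1$, if $tg^nt^{-1}\in G_1$ then conjugating back gives $g^n\in G_1\cap G_2=\{1\}$, so $tg^nt^{-1}=1\notin G_1\setminus\{1\}\supseteq\Free(X)$; otherwise $tg^nt^{-1}\notin G_1$ already. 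Either way $tg^nt^{-1}\notin\Free(X)$, and no appeal to Proposition~\ref{prop:PR_torsion-free} or to torsion-freeness of $G_2$ is needed. Your observation about the degenerate case $G_2=\{1\}$ is well taken; the paper's proof also tacitly requires $G_2$ to be nontrivial in order to pick such a $g$.
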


\begin{proof}
	Let us denote $G = G_1\rtimes G_2$ and $H_1$ and $H_2$ the subgroups of $G$ such that $H_i \simeq G_i$. Let $X$ be a weakly aperiodic $H_1$-SFT and $Y = \upa{X}$ its free extension to $G$. $Y$ is a weakly aperiodic $G$-SFT. As $G$ is a semi-direct product, $H_1\cap H_2 = \{1\}$. By taking $g\in H_2\setminus\{1\}$, we know that for any $t\in G$ and $n \geq 1$ we have $tg^nt^{-1}\notin H_1\setminus\{1\}$, as $H_1$ is normal. This means $\Cl(g)\cap R_G(\Free(X))$ is empty because $\Free(X)\subseteq H_1\setminus\{1\}$. By Theorem \ref{thm:freeX}, there exists $y\in Y$ such that $g\in\stab(y)$. Thus, $Y$ is not strongly aperiodic.
\end{proof}

\begin{lemma}
	\label{lem:PR_quotient}
	Let $N\trianglelefteq G$ be a non-trivial finitely generated normal subgroup. Then, if $G/N$ admits a weakly aperiodic SFT, $G$ is not periodically rigid.
\end{lemma}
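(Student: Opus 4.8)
The plan is to exhibit a weakly aperiodic $G$-SFT that is not strongly aperiodic, using the pull-back construction from Section~\ref{sec:canonical} together with the hypothesis that $N$ is non-trivial and finitely generated. Concretely, let $Y\subseteq A^{G/N}$ be a weakly aperiodic $G/N$-SFT, and consider its pull-back $\pi^*(Y)\subseteq A^G$. Since $N$ is finitely generated, Lemma~\ref{lem:patterns_pull_back} guarantees that $\pi^*(Y)$ is again an SFT, and it is non-empty since $Y$ is. The point is to analyze its stabilizers via Proposition~\ref{prop:quotient}: for every $y\in\pi^*(Y)$ there is $x\in Y$ with $\stab(y)=\rho(\stab(x))N$, where $\rho:G/N\to G$ is a fixed section.

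The first step is to check weak aperiodicity of $\pi^*(Y)$. Take any $y\in\pi^*(Y)$ with corresponding $x\in Y$. Since $Y$ is weakly aperiodic, $\stab(x)$ has infinite index in $G/N$, so $[G/N:\stab(x)]=\infty$. Because $\rho(\stab(x))N/N \cong \stab(x)$ inside $G/N$ and $[G:\rho(\stab(x))N]=[G/N:\stab(x)]$, the stabilizer $\stab(y)=\rho(\stab(x))N$ has infinite index in $G$. Hence $\pi^*(Y)$ contains no periodic configuration, i.e.\ it is weakly aperiodic.

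The second step is to show $\pi^*(Y)$ is \emph{not} strongly aperiodic: this is exactly where the non-triviality and finite generation of $N$ enter. Pick any $n_0\in N\setminus\{1_G\}$ (possible since $N$ is non-trivial). For every $y\in\pi^*(Y)$ and every $g\in G$, $(n_0\cdot y)(g)=y(n_0^{-1}g)$; writing $y=x\circ\pi$ and using $\pi(n_0^{-1}g)=\pi(g)$ (as $n_0\in N$), we get $(n_0\cdot y)(g)=x(\pi(g))=y(g)$. Thus $n_0\in\stab(y)$ for every configuration $y\in\pi^*(Y)$; equivalently $N\subseteq\ker(\pi^*(Y))$. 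In particular no configuration has trivial stabilizer, so $\pi^*(Y)$ is not strongly aperiodic. (Alternatively, this falls straight out of the formula $\stab(y)=\rho(\stab(x))N\supseteq N$.) Combining the two steps, $\pi^*(Y)$ witnesses that $G$ is not periodically rigid.

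There is essentially no hard obstacle here; the only mild point of care is making sure the index computation in the first step is correct — that $\rho(\stab(x))N$ really does correspond to the subgroup $\stab(x)$ of $G/N$ under the correspondence theorem, so that infinite index downstairs forces infinite index upstairs — but this is immediate from the standard correspondence between subgroups of $G/N$ and subgroups of $G$ containing $N$. Everything else is a direct application of Lemma~\ref{lem:patterns_pull_back} and Proposition~\ref{prop:quotient}.
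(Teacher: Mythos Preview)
Your proof is correct and follows essentially the same approach as the paper: form the pull-back $\pi^*(Y)$ of a weakly aperiodic $G/N$-SFT, invoke Lemma~\ref{lem:patterns_pull_back} to see it is an SFT, use Proposition~\ref{prop:quotient} to get $\stab(y)=\rho(\stab(x))N$, deduce weak aperiodicity from the index correspondence, and observe that $N\subseteq\stab(y)$ kills strong aperiodicity. Your write-up is in fact a bit more explicit than the paper's (you spell out the correspondence-theorem step and the direct verification that $n_0\in\stab(y)$), but the argument is the same.
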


\begin{proof}   
	Let $X\subseteq A^{G/N}$ be a weakly aperiodic SFT, and let $\pi^*(X)\subseteq A^G$ be its pull-back. From Proposition~\ref{prop:quotient}, we know that $\stab(\pi^*(X)) = \rho(\stab(X))N$, for any section $\rho:G/N\to G$.
	
	Suppose there is $y\in\pi^*(X)$ such that $\stab(y)$ has finite index. Then, $x\in X$ defined as $x(k) = y(\rho(k))$ for every $k\in G/N$, would have stabilizer $\stab(y)/N$ of finite index, which is a contradiction. Finally, $\stab(y)$ is non-trivial as it contains $N$.
\end{proof}

\begin{lemma}
	\label{lem:PR_exact}
	Let $G$ be a group that admits an exact sequence given by 
	$$1 \to N \to G\to H\to 1,$$
	where $N$ admits a weakly aperiodic SFT and $H$ has a torsion-free element. Then, $G$ is not periodically rigid.
\end{lemma}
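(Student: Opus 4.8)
The plan is to follow the template of Proposition~\ref{prop:PR_torsion-free}: take a weakly aperiodic SFT on $N$, push it up to $G$ by free extension, and then invoke Barbieri's Theorem~\ref{thm:freeX} to pin down a non-trivial period. Concretely, let $X$ be a weakly aperiodic $N$-SFT, which exists by hypothesis, and set $Y=\upa{X}$, its free extension to $G$. By Lemma~\ref{lem:free_extension} (parts (1) and (2)), $Y$ is a non-empty $G$-SFT. It is also weakly aperiodic: if some $y\in Y$ had a finite-index stabilizer, then $\stab(y)\cap N$ would have finite index in $N$ and, by part (3) of that lemma, would be contained in $\stab(y|_N)$, so $y|_N$ would be a periodic point of $X$ — contradicting weak aperiodicity of $X$. (This is precisely the remark recalled just before Proposition~\ref{prop:PR_torsion-free}.)

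It remains to show $Y$ is not strongly aperiodic, for which I would produce a non-trivial element of some $\stab(y)$. Let $\bar g\in H$ be a torsion-free element and let $g\in G$ be any preimage of $\bar g$ under the quotient map $G\to H$ with kernel $N$. Since $\bar g$ is torsion-free, $\bar g^{\,n}\neq 1$ for all $n\geq 1$, hence $g^n\notin N$ for all $n\geq 1$; in particular $g\neq 1_G$. As $N$ is normal, $(tgt^{-1})^n=tg^nt^{-1}\notin N$ for every $t\in G$ and $n\geq 1$, so no conjugate of $g$ has a power lying in $N$. Because $1_N\in\stab(x)$ for every $x\in X$, we have $\Free(X)\subseteq N\setminus\{1\}$, whence $R_G(\Free(X))\subseteq\{h\in G\mid \exists n\geq 1,\ h^n\in N\}$; by the previous sentence this set misses $\Cl(g)$ entirely, so $\Cl(g)\cap R_G(\Free(X))=\varnothing$. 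Theorem~\ref{thm:freeX} then yields $g\notin\Free(Y)$, i.e.\ there is $y\in Y$ with $g\in\stab(y)$. Since $g\neq 1_G$, $Y$ is a weakly aperiodic $G$-SFT that is not strongly aperiodic, so $G$ is not periodically rigid.

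I do not expect a real obstacle: the argument is essentially an assembly of Lemma~\ref{lem:free_extension} and Theorem~\ref{thm:freeX}, mirroring the proofs already given for Proposition~\ref{prop:PR_torsion-free} and the semidirect-product proposition that follows it. The only two points deserving a careful sentence are the preservation of weak aperiodicity under free extension (the $\stab(y)\cap N$ computation above) and the translation of the torsion-free hypothesis on $H$ into "$g^n\notin N$ for all $n\geq 1$" — it is essential to read this as $\bar g$ having infinite order rather than merely $\bar g\neq 1$, since every power must be excluded to empty out $\Cl(g)\cap R_G(\Free(X))$. Note finally that $N$ is not assumed finitely generated, which causes no trouble: the free extension preserves the finite type property regardless (Lemma~\ref{lem:free_extension}(1)), and the pull-back construction — which would require finite generation — is never used.
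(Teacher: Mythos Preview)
Your proof is correct and follows essentially the same approach as the paper: form the free extension $\upa{X}$ of a weakly aperiodic $N$-SFT, lift a torsion-free element of $H$ to some $g\in G$, and use normality of $N$ together with Theorem~\ref{thm:freeX} to conclude $g\notin\Free(\upa{X})$. If anything, your write-up is more careful than the paper's, which does not spell out the weak-aperiodicity step or the containment $\Free(X)\subseteq N\setminus\{1\}$.
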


\begin{proof}
	Let $X$ be a weakly aperiodic $N$-SFT and $g\in G$ an element that maps to a free generator of the quotient $G/N \simeq H$. Then, $g^k\notin N$ for all $k \neq 0$, and furthermore $tg^kt^{-1}\notin N$ for all $t\in G$, as $N$ is normal. This fact can be translated to the expression $\Cl(g)\cap R_{G}(N\setminus\{1\}) = \varnothing$, which by Theorem \ref{thm:freeX} means there exists $y\in \upa{X}$ such that $g\in\stab(y)$. Therefore, $\upa{X}$ is a weakly but not strongly aperiodic $G$-SFT.
\end{proof}

\section{Periodic rigidity of virtually nilpotent and polycylic groups}
\label{subsec:nilp_poly}

The objective of this section is to prove Conjecture~\ref{conj:periodic_rigid} holds for the both the class of virtually nilpotent groups and the class of polycyclic groups. To do this, we will first look at the properties these groups satisfy.

\subsection{Definitions, properties and Hirsch length}
\label{subsec:nilpotent}

Let $G$ be a group. For each $i\in\N$ inductively define $Z_i(G)$ as
$$Z_{i+1}(G) = \{g\in G \mid [g,h]\in Z_i(G), \forall h\in G\},$$
where $Z_0(G) = \{1_G\}$. The set $Z(G) = Z_1(G)$ is called the \define{center} of $G$ and, by definition, is the set of elements that commute with every element in $G$. We say a group is \define{nilpotent} if there exists $n\geq 0$ such that $Z_{n}(G) = G$. In this case we also say $G$ has an \define{upper central series} defined by the sequence of normal subgroups,
$$\{1_G\}\trianglelefteq Z(G) \trianglelefteq Z_2(G)\trianglelefteq\ ...\ \trianglelefteq Z_n(G) = G,$$
where $Z_{i+1}/Z_{i} = Z(G/Z_i)$.\\

A similarly defined family of groups is the family of polycyclic groups. A group $G$ is \define{polycyclic} if it admits a series
$$\{1_G\}= G_0\trianglelefteq G_1 \trianglelefteq\ ...\ \trianglelefteq G_n = G,$$
for some $n\geq 1$, such that the quotient $G_{i+1}/G_{i}$ is a cyclic group (finite or infinite).

\begin{example}
	All nilpotent groups are polycyclic, but the converse is not true. The group $\Z^2\rtimes_{M}\Z$ with 
	$$M=\begin{bmatrix}2 & 1 \\ 1 & 1\end{bmatrix}$$ is polycyclic but not nilpotent. Its center is trivial as the matrix has no non-trivial fixed points.
\end{example}

Other examples of polycyclic groups can be obtained from the Auslander-Swan Theorem which states that a polycyclic groups are exactly solvable subgroup of $GL(n,\Z)$. The necessary condition of this theorem was proven by Mal'cev~\cite{malcev1956solvable}, and the sufficient one by Auslander and Swan~\cite{auslander1967problem,swan1967representations}. A proof can be found in~\cite{segal1983polycyclic}.\\

Polycyclic satisfy properties similar to the one satisfied by nilpotent groups. Subgroups and quotients of polycyclic groups are polycyclic, and every polycylic group contains a finite index torsion-free polycylic subgroup. Furthermore, subgroups of polycyclic groups are always finitely generated. This last property is crucial when we want to define SFTs from SFTs on quotients (see Lemma~\ref{lem:patterns_pull_back}).

The key tool when working with polycyclic groups is their \define{Hirsch length}. For a polycyclic group $G$, this length, denoted $h(G)$, is equal to the number of infinite cyclic quotients in its groups series. We make proofs by induction over the Hirsch length by using the following properties.

\begin{proposition}
	\label{prop:hirsch}
	Let $G$ be a polycyclic group. The following hold,
	\begin{itemize}
		\item for a subgroup $H\leq G$, $h(H)\leq h(G)$,
		\item for a normal subgroup $N\trianglelefteq G$, $h(G) = h(N) + h(G/N)$,
		\item $h(G) = 0$ if and only if $G$ is finite,
		\item $h(G) = 1$ if and only if $G$ is virtually $\Z$,
		\item $h(G) = 2$ if and only if $G$ is virtually $\Z^2$,
		\item $h(\Z^d) = d$.
	\end{itemize}
\end{proposition}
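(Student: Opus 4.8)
The plan is to verify each of the six items by exploiting the defining series of a polycyclic group together with two standard structural facts: (i) every polycyclic group admits a \emph{refined} subnormal series all of whose factors are cyclic (finite or infinite), and the number of infinite factors in any such refinement is an invariant — this is essentially a Jordan–Hölder / Schreier refinement argument — so $h(G)$ is well-defined; (ii) subgroups and quotients of polycyclic groups are again polycyclic. For the first bullet, given $H\le G$ I would intersect the series $\{1\}=G_0\trianglelefteq\cdots\trianglelefteq G_n=G$ with $H$ to obtain a series for $H$ whose successive quotients $ (H\cap G_{i+1})/(H\cap G_i)$ embed into $G_{i+1}/G_i$; a subgroup of a cyclic group is cyclic, and a subgroup of an infinite cyclic group is either trivial or infinite cyclic, so the count of infinite factors cannot increase, giving $h(H)\le h(G)$. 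For the second bullet, concatenate a series for $N$ with the preimage under $G\to G/N$ of a series for $G/N$; by the invariance in (i) the infinite factors of $G$ are counted exactly once in $N$ and once in $G/N$, yielding $h(G)=h(N)+h(G/N)$ (this is the additivity of Hirsch length along short exact sequences).

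For the third bullet: if $h(G)=0$ then every factor in the series is finite, so $G$ is finite by induction on the length of the series; conversely a finite group has only finite cyclic factors, so $h(G)=0$. For the fourth: if $h(G)=1$, take a torsion-free finite-index polycyclic subgroup $G'\le G$ (which exists for polycyclic groups, as recalled in the text) — then $h(G')=1$ by the first two bullets, and a torsion-free polycyclic group of Hirsch length $1$ is infinite cyclic (its series has a single infinite cyclic factor and all others trivial by torsion-freeness), so $G$ is virtually $\Z$; conversely, if $[G:H]<\infty$ with $H\cong\Z$, then $h(G)=h(H)=1$ using $h(G)=h(H)$ for finite-index $H$ (a consequence of the first bullet applied both ways, since $h$ is monotone and a finite-index subgroup "loses" no infinite factor). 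The fifth bullet is entirely parallel: reduce to a torsion-free finite-index subgroup $G'$ with $h(G')=2$; such a group is torsion-free polycyclic with two infinite cyclic factors, hence an extension $1\to\Z\to G'\to\Z\to 1$, and any such extension is virtually $\Z^2$ — indeed the action of the quotient $\Z$ on the kernel $\Z$ is by $\pm1$, and passing to the index-$\le 2$ subgroup where it acts trivially gives $\Z^2$; conversely virtually $\Z^2$ forces $h=h(\Z^2)=2$. The last bullet, $h(\Z^d)=d$, is the base computation: the standard series $\{0\}\le\Z\le\Z^2\le\cdots\le\Z^d$ has exactly $d$ infinite cyclic factors.

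The main obstacle is the well-definedness statement underlying (i): one must argue that the number of infinite cyclic factors is independent of the chosen subnormal series. The cleanest route is to invoke Schreier refinement — any two subnormal series have isomorphic refinements — and then note that refining a cyclic factor $\Z$ produces exactly one infinite factor and otherwise finite ones, while refining a finite cyclic factor produces only finite factors; hence the count is refinement-invariant and therefore series-invariant. A secondary technical point is the characterization of torsion-free polycyclic groups of small Hirsch length as iterated extensions of $\Z$'s; this follows by induction, peeling off the bottom term $G_1$ of the series (which is infinite cyclic once all finite factors have been absorbed by torsion-freeness) and applying additivity. Everything else is bookkeeping with the series and the elementary fact that subgroups and quotients of cyclic groups are cyclic. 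I would present items three through six by a uniform induction on $h(G)$, stating the torsion-free reduction once and reusing it.
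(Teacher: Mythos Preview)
The paper does not prove this proposition at all: immediately after the statement it writes ``For a proof of this proposition and further properties of polycyclic groups see~\cite{segal1983polycyclic}'', treating these as standard facts about Hirsch length. Your proof plan is a correct and entirely standard way to establish these properties, essentially what one finds in Segal's book, so there is nothing to compare against beyond noting that you have supplied what the paper deliberately omitted.

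One very minor point: in your converse for the fourth bullet you assert $h(G)=h(H)$ for finite-index $H$ and justify it as ``the first bullet applied both ways''. The first bullet only gives $h(H)\le h(G)$; the reverse inequality needs the second bullet (pass to the normal core $H_0\trianglelefteq G$ of $H$, which still has finite index, and use $h(G)=h(H_0)+h(G/H_0)=h(H_0)\le h(H)$). This is a trivial fix and does not affect the overall argument.
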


For a proof of this proposition and further properties of polycyclic groups see~\cite{segal1983polycyclic}. 

\subsection{Aperiodic SFTs for nilpotent and polycyclic groups}

In this section we prove that all polycyclic groups and all virtually nilpotent groups verify Conjecture~\ref{conj:periodic_rigid}. To do this, we make an induction over the Hirsch length of a group, as was done in~\cite{jeandel2015aperiodic} for strongly aperiodic SFTs.

\begin{theorem}
	\label{thm:PR_nil}
	Finitely generated infinite nilpotent groups are periodically rigid if and only if they are not virtually $\Z$, or $\Z^2$.
\end{theorem}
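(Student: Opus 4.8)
The plan is a direct case analysis on the Hirsch length $h(G)$ and on whether or not $G$ has torsion, feeding into the inheritance results of Section~\ref{sec:PeriodicRigidity} and into the Carroll--Penland theorem that a finitely generated virtually nilpotent group admits a weakly aperiodic SFT exactly when it is not virtually $\Z$~\cite{carroll2015periodic,ballier2018domino}. Recall from Proposition~\ref{prop:hirsch} that, for a finitely generated nilpotent group, $h(G)=0$ means finite, $h(G)=1$ means virtually $\Z$, and $h(G)=2$ means virtually $\Z^2$; moreover the torsion elements of $G$ form a finite characteristic (hence finitely generated) subgroup $T(G)$, every subgroup of $G$ is finitely generated, and $h(G/N)=h(G)-h(N)$ for $N\trianglelefteq G$. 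Thus, among infinite $G$, the condition ``not virtually $\Z$ and not torsion-free virtually $\Z^2$'' splits into exactly two subcases: either $G$ has torsion and $h(G)\ge 2$, or $G$ is torsion-free and $h(G)\ge 3$.

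For the rigid direction, suppose $G$ is virtually $\Z$ or torsion-free virtually $\Z^2$. If $G$ is virtually $\Z$, then by Carroll--Penland it admits no weakly aperiodic SFT, so it is periodically rigid by vacuity. If $G$ is torsion-free with $h(G)=2$, then $G$ is a torsion-free group possessing a finite-index subgroup isomorphic to $\Z^2$, so Proposition~\ref{prop:virt_PR} together with Lemma~\ref{lem:PR_Z2} (periodic rigidity of $\Z^2$) shows that $G$ is periodically rigid; alternatively, one may invoke the standard fact that a finitely generated torsion-free nilpotent group of Hirsch length at most $2$ is isomorphic to $1$, $\Z$, or $\Z^2$~\cite{segal1983polycyclic} and conclude directly from Lemma~\ref{lem:PR_Z2}.

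For the non-rigid direction, suppose $G$ is infinite, not virtually $\Z$, and not torsion-free virtually $\Z^2$. In both remaining subcases the goal is to exhibit a non-trivial finitely generated normal subgroup $N\trianglelefteq G$ such that $G/N$ is a finitely generated nilpotent group that is not virtually $\Z$; then $G/N$ admits a weakly aperiodic SFT by Carroll--Penland, and Lemma~\ref{lem:PR_quotient} yields that $G$ is not periodically rigid. If $G$ has torsion, take $N=T(G)$: it is non-trivial, finite and normal, and $h(G/N)=h(G)\ge 2$, so $G/N$ is neither finite nor virtually $\Z$. If $G$ is torsion-free with $h(G)\ge 3$, then $Z(G)$ is a non-trivial, finitely generated, torsion-free abelian group, hence contains an element $z$ of infinite order; take $N=\langle z\rangle\cong\Z$, which is finitely generated and normal (indeed central), with $h(G/N)=h(G)-1\ge 2$. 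In either case $G/N$ is infinite nilpotent and not virtually $\Z$, and the argument closes.

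I expect no genuinely hard step here: the real content is offloaded to Lemmas~\ref{lem:PR_Z2} and~\ref{lem:PR_quotient}, Proposition~\ref{prop:virt_PR}, the structural facts in Proposition~\ref{prop:hirsch}, and the Carroll--Penland existence theorem. The only points that need a little care are structural bookkeeping: checking that each quotient $G/N$ is still nilpotent of the asserted Hirsch length, and, on the rigid side, either routing through Proposition~\ref{prop:virt_PR} or quoting the classification of low-Hirsch-length torsion-free nilpotent groups. The same reduction can also be packaged as an induction on $h(G)$ (the quotient $G/N$ having either strictly smaller Hirsch length or, for $N=T(G)$, the same Hirsch length with trivial torsion), which is consistent with the induction-on-Hirsch-length strategy used for the polycyclic case.
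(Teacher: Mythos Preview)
Your proposal is correct and reaches the same conclusion via a slightly different route than the paper.

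Both arguments funnel the non-rigid direction through Lemma~\ref{lem:PR_quotient}: exhibit a non-trivial finitely generated normal subgroup $N$ with $G/N$ not virtually $\Z$, and conclude. The differences are in how that $N$ is chosen and in the bookkeeping. For the torsion case the paper invokes Proposition~\ref{prop:PR_torsion-free} (hence Barbieri's free-extension criterion, Theorem~\ref{thm:freeX}), whereas you simply quotient by the finite torsion subgroup $T(G)$ and stay entirely within Lemma~\ref{lem:PR_quotient}; this is more elementary and avoids the free-extension machinery. For the torsion-free case both take a central copy of $\Z$, but the paper proceeds by induction on $h(G)$ while you appeal directly to Carroll--Penland for the existence of a weakly aperiodic SFT on the quotient. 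Your packaging is a clean two-case split with no induction; the paper's induction has the minor cosmetic wrinkle that when $h(G)=3$ the quotient $G/H$ may equal $\Z^2$, which is periodically rigid, so the phrase ``by induction, $G/H$ is not periodically rigid'' does not literally apply there---though $\Z^2$ still has a weakly aperiodic SFT, so Lemma~\ref{lem:PR_quotient} fires anyway. Your direct citation of Carroll--Penland sidesteps this. The trade-off is that you import Carroll--Penland as a black box, while the paper's argument is closer to self-contained modulo the existence of a weakly aperiodic SFT on $\Z^2$.
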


\begin{proof}
	Let $G$ be a nilpotent group that is neither virtually $\Z$, nor $\Z^2$. We prove the statement by induction on its Hirsch length $h(G)$.
	Starting off, suppose $h(G) = 2$. This means $G$ is virtually $\Z^2$ (Proposition~\ref{prop:hirsch}), but not $\Z^2$ by our hypothesis. If $G$ is torsion-free, then it is abelian, as all torsion-free virtually abelian nilpotent groups are abelian (see \cite[Lemma 3.1]{koberda2010some}). Because the only torsion-free virtually $\Z^2$ abelian group is $\Z^2$, $G$ must not be torsion-free. Thus, $G$ has torsion and contains a torsion-free subgroup $H$ isomorphic to $\Z^2$. By Proposition \ref{prop:PR_torsion-free}, $G$ is not periodically rigid.
	
	Next, let $G$ be a nilpotent group with $h(G) > 2$. Being nilpotent, $G$ contains a torsion-free finite index nilpotent subgroup, so once again by Proposition \ref{prop:PR_torsion-free} we can suppose that $G$ is torsion-free. In addition, $G$ contains a normal subgroup isomorphic to $\Z$ in its center, which we call $H$. So, $h(G/H) = h(G) - h(H) \geq 2$, and by induction, $G/H$ is not periodically rigid. $G/H$ is also not virtually $\Z$. Finally, by Lemma \ref{lem:PR_quotient}, $G$ is not periodically rigid.
\end{proof}
\begin{corollary}[Theorem~\ref{intro:vnilp}]
	Finitely generated virtually nilpotent groups are periodically rigid if and only if they are not virtually $\Z$ or torsion-free virtually $\Z^2$.
\end{corollary}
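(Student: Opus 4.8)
The plan is to deduce the virtually nilpotent case from the nilpotent case (Theorem~\ref{thm:PR_nil}) already proved, using the inheritance results of Section~\ref{sec:PeriodicRigidity}. Let $G$ be a finitely generated virtually nilpotent group, so $G$ contains a finite-index nilpotent subgroup $N_0$. By passing to a further finite-index subgroup (the intersection of the finitely many conjugates of a torsion-free finite-index subgroup of $N_0$, which exists since finitely generated nilpotent groups are virtually torsion-free), we may assume $G$ contains a finite-index \emph{torsion-free} nilpotent subgroup $H$.

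First I would handle the ``only if'' direction, which is the easy one: if $G$ is virtually $\Z$ then it is periodically rigid vacuously (no weakly aperiodic SFT at all, or simply every SFT has a periodic point), and if $G$ is torsion-free virtually $\Z^2$ then $G$ has a finite-index subgroup isomorphic to $\Z^2$, which is periodically rigid by Lemma~\ref{lem:PR_Z2}, so $G$ is periodically rigid by Proposition~\ref{prop:virt_PR} (here we use that $G$ is torsion-free). Conversely, suppose $G$ is neither virtually $\Z$ nor torsion-free virtually $\Z^2$; we must show $G$ is not periodically rigid, i.e.\ $G$ admits a weakly aperiodic SFT that is not strongly aperiodic. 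Since $G$ is infinite and not virtually $\Z$, its finite-index torsion-free nilpotent subgroup $H$ has Hirsch length $h(H)=h(G)\geq 2$.

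Now split into cases according to whether $G$ is torsion-free. If $G$ is torsion-free, then $H$ is a torsion-free nilpotent group with $h(H)\geq 2$ that is not isomorphic to $\Z^2$ (otherwise $G$ would be torsion-free virtually $\Z^2$), hence $H$ is not periodically rigid by Theorem~\ref{thm:PR_nil}; and since $G$ is torsion-free, Proposition~\ref{prop:virt_PR} lets us transport a weakly-but-not-strongly aperiodic $H$-SFT up to $G$ — more precisely, Proposition~\ref{prop:virt_PR} says that if $H$ is periodically rigid then so is $G$, so contrapositively $G$ is not periodically rigid. If instead $G$ has torsion, pick a nontrivial torsion element $g\in G$; since $H$ is torsion-free, $g\notin H$. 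The subgroup $H$ is torsion-free nilpotent of Hirsch length $\geq 2$, so it is not virtually $\Z$ and (by Conjecture-verified Theorem~\ref{thm:PR_nil} together with Carroll--Penland, or directly since $\Z^2\leq$ some finite-index subgroup of $H$) it admits a weakly aperiodic SFT. Then Proposition~\ref{prop:PR_torsion-free}, applied with the torsion-free subgroup $H$ and the torsion element $g\in G\setminus H$, immediately gives that $G$ is not periodically rigid.

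The only genuinely delicate point is the reduction to a torsion-free \emph{normal} (or at least finite-index) subgroup: one must check that a finitely generated virtually nilpotent group has a finite-index torsion-free subgroup, and that this subgroup still has Hirsch length $\geq 2$ and is not $\Z^2$ exactly when $G$ fails the excluded cases. This is standard (finitely generated nilpotent groups are residually finite and virtually torsion-free, and Hirsch length is a commensurability invariant by Proposition~\ref{prop:hirsch}), so once it is in place the corollary follows formally from Theorem~\ref{thm:PR_nil}, Proposition~\ref{prop:virt_PR}, Proposition~\ref{prop:PR_torsion-free} and Lemma~\ref{lem:PR_Z2}.
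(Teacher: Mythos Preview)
Your overall strategy coincides with the paper's: split on whether $G$ has torsion, pass to a finite-index torsion-free nilpotent subgroup $H$, and combine Theorem~\ref{thm:PR_nil} with the inheritance results of Section~\ref{sec:PeriodicRigidity}. In the torsion case you are slightly more direct than the paper --- which first invokes an external lemma of Bridson--Groves to get an epimorphism $H\twoheadrightarrow\Z^2$ before concluding via Proposition~\ref{prop:WA} that $H$ has a weakly aperiodic SFT --- but both arguments finish with what is essentially Proposition~\ref{prop:PR_torsion-free}.

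There is, however, a logical slip in your torsion-free case. You write that ``Proposition~\ref{prop:virt_PR} says that if $H$ is periodically rigid then so is $G$, so contrapositively $G$ is not periodically rigid.'' But the contrapositive of ``$H$ periodically rigid $\Rightarrow G$ periodically rigid'' is ``$G$ not periodically rigid $\Rightarrow H$ not periodically rigid''; what you actually need is the \emph{converse}, since you have established that $H$ is not periodically rigid and want to conclude that $G$ is not. Proposition~\ref{prop:virt_PR}, whose proof runs the higher-power construction from a $G$-SFT down to an $H$-SFT, does not supply that direction, and the free extension does not obviously help either, since by Theorem~\ref{thm:freeX} a nontrivial stabilizer of $X$ can disappear in $X^{\uparrow}$. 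The paper phrases its proof contrapositively and cites Proposition~\ref{prop:virt_PR} at the analogous step, so this is a point where an explicit argument for the converse implication --- or a direct construction on $G$ when $h(G)\geq 3$, for instance by finding a suitable normal subgroup of $G$ itself and applying Lemma~\ref{lem:PR_exact} or Lemma~\ref{lem:PR_quotient} --- would make both arguments airtight.
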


\begin{proof}
	Let $G$ be a periodically rigid virtually nilpotent group, and $H$ a finite index torsion-free nilpotent group. If $G$ is torsion-free, by Proposition~\ref{prop:virt_PR}, $H$ has to be periodically rigid. Then, by Theorem~\ref{thm:PR_nil} $H$ is virtually $\Z$ or $\Z^2$, which means $G$ is virtually $\Z$ or torsion-free virtually free $\Z^2$. Suppose $G$ is not torsion-free and not virtually $\Z$. Then, by~\cite[Lemma 13]{bridson2012groups}, there exists an epimorphism $f:H\to\Z^2$. By Proposition~\ref{prop:WA} $H$ admits a weakly aperiodic SFT, and thus by Lemma~\ref{lem:PR_quotient}, $G$ is not periodically rigid. This contradicts our assumption that $G$ was periodically rigid. Therefore $G$ must be virtually $\Z$.
\end{proof}

\begin{theorem}[Theorem~\ref{intro:poly}]
	\label{thm:PR_poly}
	Finitely generated polycyclic groups are periodically rigid if and only if they are not virtually $\Z$ or torsion-free virtually $\Z^2$.
\end{theorem}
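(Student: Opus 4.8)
\noindent The plan is to follow the inductive scheme used for Theorem~\ref{thm:PR_nil}, and before it by Jeandel for strongly aperiodic SFTs on polycyclic groups~\cite{jeandel2015aperiodic}: an induction on the Hirsch length $h(G)$, using that every subgroup and every quotient of a polycyclic group is again polycyclic, hence finitely generated, so that the constructions of Section~\ref{sec:canonical} are available throughout. One direction is immediate: a virtually $\Z$ group admits no weakly aperiodic SFT and so is vacuously periodically rigid, while a torsion-free virtually $\Z^2$ group contains a finite-index copy of $\Z^2$, which is periodically rigid by Lemma~\ref{lem:PR_Z2}, so periodic rigidity of $G$ follows from Proposition~\ref{prop:virt_PR}. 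It remains to show that a polycyclic group $G$ which is neither virtually $\Z$ nor torsion-free virtually $\Z^2$ is not periodically rigid.

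I would first dispose of the case where $G$ has torsion. Choose a finite-index torsion-free polycyclic subgroup $H\le G$; since $G$ is not virtually $\Z$ we have $h(H)=h(G)\ge 2$, so $H$ is polycyclic and not virtually $\Z$, hence admits a weakly aperiodic SFT (Proposition~\ref{prop:WA} together with Jeandel's theorem for polycyclic groups). Any non-trivial torsion element $g\in G$ lies outside the torsion-free subgroup $H$, so Proposition~\ref{prop:PR_torsion-free} applies and $G$ is not periodically rigid.

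Next, suppose $G$ is torsion-free; then $h(G)\ge 3$, since $h(G)=1$ iff $G$ is virtually $\Z$ and $h(G)=2$ iff $G$ is virtually $\Z^2$ (Proposition~\ref{prop:hirsch}). If $G$ is nilpotent, Theorem~\ref{thm:PR_nil} already applies, so assume it is not. I would then try to peel off a normal subgroup and induct: if $G$ has a non-trivial normal subgroup $N$ with $h(G/N)\ge 2$, then $G/N$ is polycyclic and not virtually $\Z$, hence admits a weakly aperiodic SFT, and Lemma~\ref{lem:PR_quotient} concludes; if $G$ has a normal subgroup $N$ with $h(N)\ge 2$ and $h(G/N)\ge 1$, then $N$ admits a weakly aperiodic SFT and $G/N$, being infinite polycyclic, contains a torsion-free element, so Lemma~\ref{lem:PR_exact} concludes. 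Since $h(G)\ge 3$, if both attempts fail then \emph{every} non-trivial normal subgroup of $G$ has finite index; applying this to the last non-trivial term of the derived series of $G$---a non-trivial, torsion-free, abelian, characteristic subgroup, so isomorphic to $\Z^m$ with necessarily $m=h(G)$---shows that $G$ contains a finite-index normal subgroup $T\cong\Z^d$ with $d=h(G)\ge 3$; that is, $G$ is a torsion-free crystallographic group.

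The hard part will be exactly this residual case: here $G$ has no proper non-trivial infinite-index normal subgroup (equivalently, its holonomy acts $\Q$-irreducibly on $T\otimes\Q$), so the pull-back and push-forward machinery is useless and periodicity must be produced by hand. I would fix an embedding $\Z^2\le T\le G$ and a strongly aperiodic $\Z^2$-SFT $X_0$, and take $Y=\upa{X_0}$, its free extension to $G$; by Lemma~\ref{lem:free_extension} this is a weakly aperiodic $G$-SFT. Since $T$ is abelian and normal, for $v\in T$ the conjugacy class $\Cl(v)$ is the finite orbit $\{\,qvq^{-1}:q\in G/T\,\}$ of $v$ under the conjugation action and lies inside $T$, while an element of $T$ has a positive power in $\Free(X_0)=\Z^2\setminus\{1_G\}$ if and only if it already lies in $\Z^2\setminus\{1_G\}$. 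As $\Z^d$ with $d\ge 3$ is not a finite union of subgroups of rank $<d$, one can choose $v\in T\setminus\{1_G\}$ whose entire $(G/T)$-orbit avoids $\Z^2$; then $\Cl(v)\cap R_G(\Free(X_0))=\varnothing$, so Theorem~\ref{thm:freeX} gives a configuration of $Y$ fixed by $v$, and $Y$ is weakly but not strongly aperiodic. The remaining ingredients---finite-index torsion-free subgroups of polycyclic groups, additivity of Hirsch length, finiteness of polycyclic torsion groups, and that a non-trivial characteristic subgroup of a torsion-free polycyclic group is infinite---are standard; see~\cite{segal1983polycyclic}.
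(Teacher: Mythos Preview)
Your argument is correct and follows the same inductive framework as the paper (Hirsch-length induction; reduction to the torsion-free case via Proposition~\ref{prop:PR_torsion-free}; then peeling off a normal subgroup using Lemma~\ref{lem:PR_quotient} or Lemma~\ref{lem:PR_exact}). Where you differ is in care: the paper, once it has fixed a normal $N\cong\Z^k$ with $k\geq2$, simply asserts that one may ``take a normal subgroup $H$ isomorphic to $\Z^2$'' and apply Lemma~\ref{lem:PR_exact}, whereas you correctly isolate the situation in which no proper infinite-index normal subgroup exists at all---a torsion-free crystallographic group with $\Q$-irreducible holonomy in dimension $d\geq3$---and you dispose of it directly with the free extension of a strongly aperiodic $\Z^2$-SFT and Theorem~\ref{thm:freeX}. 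So your route is the same skeleton with one extra, genuinely needed, endgame. One small point to make explicit in that endgame: the copy of $\Z^2\leq T\cong\Z^d$ should be chosen as a direct summand (a coordinate plane, say), so that for $v\in T$ the equivalence ``$v^n\in\Z^2\setminus\{1_G\}$ for some $n\geq1$'' $\Leftrightarrow$ ``$v\in\Z^2\setminus\{1_G\}$'' actually holds; this is trivially arranged but is what makes $R_G(\Free(X_0))\cap T=\Z^2\setminus\{1_G\}$ and hence your covering argument go through.
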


\begin{proof}
	Let $G$ be a polycyclic group that is neither virtually $\Z$ nor torsion-free virtually $\Z^2$.
	We proceed one again by induction on the Hirsch length of $G$, $h(G)$. If $h(G) = 2$, then $G$ is virtually $\Z^2$ and has torsion elements. Thus, by Proposition~\ref{prop:PR_torsion-free}, $G$ is not periodically rigid. 
	
	Now, let $h(G) = n > 2$. As $G$ is polycyclic, it contains a torsion-free polycylcic subgroup of finite index. Therefore, by Proposition~\ref{prop:PR_torsion-free}, we can assume $G$ is torsion-free. In addition, as $G$ is polycyclic, it contains a normal subgroup isomorphic to $N = \Z^k$ for $k>0$. If $k\geq2$, take a normal subgroup $H$ isomorphic to $\Z^2$. Then, $G$ satisfies the exact sequence 
	$$1 \to \Z^2\to G\to G/H \to 1,$$
	where $h(G/H) = n - 2>0$. By Lemma~\ref{lem:PR_exact}, $G$ is not periodically rigid because $G/H$ contains a torsion-free element. Finally, if $k = 1$, then $h(G/N) = h(G) - h(N) = n-1 \geq 2$ and $G$ is not periodically rigid by the induction hypothesis and Lemma \ref{lem:PR_quotient}.
\end{proof}


\section*{Acknowledgments}

I would like to thank Nathalie Aubrun for her constant support, comments and discussions. I would also like to thank Etienne Moutot and Solène Esnay for many fruitful discussions on periodically rigid groups.

%
%
%
\printbibliography

@article {hochman2010entropies,
    AUTHOR = {Hochman, Michael and Meyerovitch, Tom},
     TITLE = {A characterization of the entropies of multidimensional shifts
              of finite type},
   JOURNAL = {Ann. of Math. (2)},
  FJOURNAL = {Annals of Mathematics. Second Series},
    VOLUME = {171},
      YEAR = {2010},
    NUMBER = {3},
     PAGES = {2011--2038},
      ISSN = {0003-486X,1939-8980},
       DOI = {10.4007/annals.2010.171.2011}
}

@article {ballier2018domino,
    AUTHOR = {Ballier, Alexis and Stein, Maya},
     TITLE = {The domino problem on groups of polynomial growth},
   JOURNAL = {Groups Geom. Dyn.},
  FJOURNAL = {Groups, Geometry, and Dynamics},
    VOLUME = {12},
      YEAR = {2018},
    NUMBER = {1},
     PAGES = {93--105},
      ISSN = {1661-7207,1661-7215},
       DOI = {10.4171/GGD/439},
       URL = {https://doi.org/10.4171/GGD/439},
}

@article{jeandel2015translation,
  title={Translation-like actions and aperiodic subshifts on groups},
  author={Jeandel, Emmanuel},
  journal={arXiv preprint arXiv:1508.06419},
  year={2015}
}

@article{jeandel2015aperiodic2,
  title={Aperiodic subshifts of finite type on groups},
  author={Jeandel, Emmanuel},
  journal={arXiv preprint arXiv:1501.06831},
  year={2015}
}

@article {jeandel2015charact,
    AUTHOR = {Jeandel, Emmanuel and Vanier, Pascal},
     TITLE = {Characterizations of periods of multi-dimensional shifts},
   JOURNAL = {Ergodic Theory Dynam. Systems},
  FJOURNAL = {Ergodic Theory and Dynamical Systems},
    VOLUME = {35},
      YEAR = {2015},
    NUMBER = {2},
     PAGES = {431--460},
      ISSN = {0143-3857,1469-4417},
       DOI = {10.1017/etds.2013.60},
}

@article{jeandel2015aperiodic,
  title={Aperiodic subshifts on polycyclic groups},
  author={Jeandel, Emmanuel},
  journal={arXiv preprint arXiv:1510.02360},
  year={2015}
}

@article {carroll2015periodic,
    AUTHOR = {Carroll, David and Penland, Andrew},
     TITLE = {Periodic points on shifts of finite type and commensurability
              invariants of groups},
   JOURNAL = {New York J. Math.},
  FJOURNAL = {New York Journal of Mathematics},
    VOLUME = {21},
      YEAR = {2015},
     PAGES = {811--822},
      ISSN = {1076-9803},
}

@article {barbieri2023aperiodic_non_fg,
	AUTHOR = {Barbieri, Sebasti\'{a}n},
	TITLE = {Aperiodic subshifts of finite type on groups which are not
	finitely generated},
	JOURNAL = {Proc. Amer. Math. Soc.},
	FJOURNAL = {Proceedings of the American Mathematical Society},
	VOLUME = {151},
	YEAR = {2023},
	NUMBER = {9},
	PAGES = {3839--3843},
	ISSN = {0002-9939,1088-6826},
	DOI = {10.1090/proc/16379},
}

@article {barbieri2021entropies,
	AUTHOR = {Barbieri, Sebasti\'{a}n},
	TITLE = {On the entropies of subshifts of finite type on countable
	amenable groups},
	JOURNAL = {Groups Geom. Dyn.},
	FJOURNAL = {Groups, Geometry, and Dynamics},
	VOLUME = {15},
	YEAR = {2021},
	NUMBER = {2},
	PAGES = {607--638},
	ISSN = {1661-7207,1661-7215},
	DOI = {10.4171/GGD/608},
}

@article{barbieri2023soficity,
  title={Soficity of free extensions of effective subshifts},
  author={Barbieri, Sebasti{\'a}n and Sablik, Mathieu and Salo, Ville},
  journal={arXiv preprint arXiv:2309.02620},
  year={2023}
}

@article{raymond2023shifts,
  title={Shifts of finite type on locally finite groups},
  author={Raymond, Jade},
  journal={Ergodic Theory and Dynamical Systems},
  pages={1--34},
  year={2023},
  publisher={Cambridge University Press}
}

@article{aubrun2019realization,
  author = {Nathalie Aubrun and Sebasti{\'{a}}n Barbieri and St{\'{e}}phan Thomass{\'{e}}},
  title = {Realization of aperiodic subshifts and uniform densities in groups},
  journal = {Groups, Geometry, and Dynamics},
  volume = {13},
  year = {2019},
  number = {1},
  pages = {107--129},
  publisher = {European Mathematical Publishing House},
  doi = {10.4171/ggd/487}
}

@article {aubrun2013simulation,
    AUTHOR = {Aubrun, Nathalie and Sablik, Mathieu},
     TITLE = {Simulation of effective subshifts by two-dimensional subshifts
              of finite type},
   JOURNAL = {Acta Appl. Math.},
  FJOURNAL = {Acta Applicandae Mathematicae},
    VOLUME = {126},
      YEAR = {2013},
     PAGES = {35--63},
      ISSN = {0167-8019,1572-9036},
       DOI = {10.1007/s10440-013-9808-5},
}

@article{callard2024computability,
  title={Computability of extender sets in multidimensional subshifts},
  author={Callard, Antonin and Salomon, L{\'e}o Paviet and Vanier, Pascal},
  journal={arXiv preprint arXiv:2401.07549},
  year={2024}
}

@article{barbieri2019simulation,
    AUTHOR = {Barbieri, Sebasti\'{a}n and Sablik, Mathieu},
     TITLE = {{A generalization of the simulation theorem for semidirect
              products}},
   JOURNAL = {Ergodic Theory Dynam. Systems},
  FJOURNAL = {Ergodic Theory and Dynamical Systems},
    VOLUME = {39},
      YEAR = {2019},
    NUMBER = {12},
     PAGES = {3185--3206},
      ISSN = {0143-3857,1469-4417},
       DOI = {10.1017/etds.2018.21},
}

@article {hochman2009effective,
    AUTHOR = {Hochman, Michael},
     TITLE = {On the dynamics and recursive properties of multidimensional
              symbolic systems},
   JOURNAL = {Invent. Math.},
  FJOURNAL = {Inventiones Mathematicae},
    VOLUME = {176},
      YEAR = {2009},
    NUMBER = {1},
     PAGES = {131--167},
      ISSN = {0020-9910,1432-1297},
       DOI = {10.1007/s00222-008-0161-7},
}

@article{barbieri2021groups,
  title={Groups with self-simulable zero-dimensional dynamics},
  author={Barbieri, Sebasti{\'a}n and Sablik, Mathieu and Salo, Ville},
  journal={arXiv preprint arXiv:2104.05141},
  year={2021}
}

@article{barbieri2019geometric,
  title={A geometric simulation theorem on direct products of finitely generated groups},
  author = {Barbieri, Sebasti{\'a}n},
  volume = {09},
  ISSN = {2397-3129},
  DOI = {10.19086/da.8820},
  journal = {Discrete Analysis},
  publisher = {Alliance of Diamond Open Access Journals},
  year = {2019},
}

@inproceedings{jeandel2019characterization,
  title={A characterization of subshifts with computable language},
  author={Jeandel, Emmanuel and Vanier, Pascal},
  booktitle={36th International Symposium on Theoretical Aspects of Computer Science (STACS 2019)},
  year={2019},
  organization={Schloss Dagstuhl-Leibniz-Zentrum fuer Informatik}
}

@article{durand2012fixed,
  title={Fixed-point tile sets and their applications},
  author={Durand, Bruno and Romashchenko, Andrei and Shen, Alexander},
  journal={Journal of Computer and System Sciences},
  volume={78},
  number={3},
  pages={731--764},
  year={2012},
  publisher={Elsevier}
}

@incollection {aubrun2018domino,
    AUTHOR = {Aubrun, Nathalie and Barbieri, Sebasti\'{a}n and Jeandel,
              Emmanuel},
     TITLE = {About the domino problem for subshifts on groups},
 BOOKTITLE = {Sequences, groups, and number theory},
    SERIES = {Trends Math.},
     PAGES = {331--389},
 PUBLISHER = {Birkh\"{a}user/Springer, Cham},
      YEAR = {2018},
      ISBN = {978-3-319-69152-7},
       DOI = {10.1007/978-3-319-69152-7\_9},
}

@article{bartholdi2024shifts,
  title={Shifts on the lamplighter group},
  author={Bartholdi, Laurent and Salo, Ville},
  journal={arXiv preprint arXiv:2402.14508},
  year={2024}
}

@article{grigorchuk2024sft,
  title={SFT covers for actions of the first Grigorchuk group},
  author={Grigorchuk, Rostislav and Salo, Ville},
  journal={arXiv preprint arXiv:2403.06480},
  year={2024}
}

@article{cohen2022strongly,
  title={Strongly aperiodic subshifts of finite type on hyperbolic groups},
  author={Cohen, David Bruce and Goodman-Strauss, Chaim and Rieck, Yo’av},
  journal={Ergodic Theory and Dynamical Systems},
  volume={42},
  number={9},
  pages={2740--2783},
  year={2022},
  publisher={Cambridge University Press}
}

@article{higman1940units,
  title={The units of group-rings},
  author={Higman, Graham},
  journal={Proceedings of the London Mathematical Society},
  volume={2},
  number={1},
  pages={231--248},
  year={1940},
  publisher={Oxford University Press}
}

@article {benli2012indicable,
    AUTHOR = {Benli, Mustafa G\"{o}khan},
     TITLE = {Indicable groups and endomorphic presentations},
   JOURNAL = {Glasg. Math. J.},
  FJOURNAL = {Glasgow Mathematical Journal},
    VOLUME = {54},
      YEAR = {2012},
    NUMBER = {2},
     PAGES = {335--344},
      ISSN = {0017-0895,1469-509X},
       DOI = {10.1017/S0017089511000632},
}

@article {rips1982subgroups,
    AUTHOR = {Rips, Eliyahu},
     TITLE = {Subgroups of small cancellation groups},
   JOURNAL = {Bull. London Math. Soc.},
  FJOURNAL = {The Bulletin of the London Mathematical Society},
    VOLUME = {14},
      YEAR = {1982},
    NUMBER = {1},
     PAGES = {45--47},
      ISSN = {0024-6093,1469-2120},
       DOI = {10.1112/blms/14.1.45}
}

@article {mihailova1968occurrence,
    AUTHOR = {Mihailova, K. A.},
     TITLE = {The occurrence problem for free products of groups},
   JOURNAL = {Mat. Sb. (N.S.)},
  FJOURNAL = {Matematicheski\u{\i} Sbornik. Novaya Seriya},
    VOLUME = {75(117)},
      YEAR = {1968},
     PAGES = {199--210},
      ISSN = {0368-8666},
}

@article {jeandel2020slopes,
    AUTHOR = {Jeandel, Emmanuel and Moutot, Etienne and Vanier, Pascal},
     TITLE = {Slopes of multidimensional subshifts},
   JOURNAL = {Theory Comput. Syst.},
  FJOURNAL = {Theory of Computing Systems},
    VOLUME = {64},
      YEAR = {2020},
    NUMBER = {1},
     PAGES = {35--61},
      ISSN = {1432-4350,1433-0490},
       DOI = {10.1007/s00224-019-09931-1},
}

@article {culik1995aperiodic,
    AUTHOR = {Culik, II, Karel and Kari, Jarkko},
     TITLE = {An aperiodic set of {W}ang cubes},
   JOURNAL = {J.UCS},
  FJOURNAL = {J.UCS. The Journal of Universal Computer Science},
    VOLUME = {1},
      YEAR = {1995},
    NUMBER = {10},
     PAGES = {675--686},
      ISSN = {0948-695X,0948-6968},
}

@article{esnay2022weakly,
  title={Aperiodic {SFT}s on {B}aumslag-{S}olitar groups},
  author={Esnay, Sol{\`e}ne J and Moutot, Etienne},
  journal={Theoretical Computer Science},
  volume={917},
  pages={31--50},
  year={2022},
  publisher={Elsevier}
}

@online{pytheas2022conjecture,
  author = {N. Pytheas-Fogg},
  title = {GroupeGroupes},
  year = 2022,
  url = {https://pytheas.math.cnrs.fr/index.php/PytheasFogg/GroupeGroupes},
  urldate = {2022-07-06}
}

@article {piantodosi2008free,
    AUTHOR = {Piantadosi, Steven T.},
     TITLE = {Symbolic dynamics on free groups},
   JOURNAL = {Discrete Contin. Dyn. Syst.},
  FJOURNAL = {Discrete and Continuous Dynamical Systems. Series A},
    VOLUME = {20},
      YEAR = {2008},
    NUMBER = {3},
     PAGES = {725--738},
      ISSN = {1078-0947,1553-5231},
       DOI = {10.3934/dcds.2008.20.725},
}

@article{cohen2020lamplighters,
  title={Lamplighters admit weakly aperiodic SFTs},
  author={Cohen, David Bruce},
  journal={Groups, Geometry, and Dynamics},
  volume={14},
  number={4},
  pages={1241--1252},
  year={2020}
}

@article{cohen2017large,
  title={The large scale geometry of strongly aperiodic subshifts of finite type},
  author={Cohen, David Bruce},
  journal={Advances in Mathematics},
  volume={308},
  pages={599--626},
  year={2017},
  publisher={Elsevier}
}

@incollection{gromov1987hyperbolic,
  title={Hyperbolic groups},
  author={Gromov, Mikhael},
  booktitle={Essays in group theory},
  pages={75--263},
  year={1987},
  publisher={Springer}
}

@book{coornaert2006symbolic,
  title={Symbolic dynamics and hyperbolic groups},
  author={Coornaert, Michel and Papadopoulos, Athanase},
  year={2006},
  publisher={Springer}
}

@article {malcev1956solvable,
    AUTHOR = {Mal'cev, Anatoly I},
     TITLE = {On certain classes of infinite solvable groups},
   JOURNAL = {Amer. Math. Soc. Transl. (2)},
  FJOURNAL = {Amer. Math. Soc. Transl. (2)},
    VOLUME = {2},
      YEAR = {1956},
     PAGES = {1--21},
}

@article {swan1967representations,
    AUTHOR = {Swan, Richard G.},
     TITLE = {Representations of polycyclic groups},
   JOURNAL = {Proc. Amer. Math. Soc.},
  FJOURNAL = {Proceedings of the American Mathematical Society},
    VOLUME = {18},
      YEAR = {1967},
     PAGES = {573--574},
      ISSN = {0002-9939,1088-6826},
       DOI = {10.2307/2035503},
}

@article {auslander1967problem,
    AUTHOR = {Auslander, Louis},
     TITLE = {On a problem of {P}hilip {H}all},
   JOURNAL = {Ann. of Math. (2)},
  FJOURNAL = {Annals of Mathematics. Second Series},
    VOLUME = {86},
      YEAR = {1967},
     PAGES = {112--116},
      ISSN = {0003-486X},
       DOI = {10.2307/1970362},
}

@book {segal1983polycyclic,
    AUTHOR = {Segal, Daniel},
     TITLE = {Polycyclic groups},
    SERIES = {Cambridge Tracts in Mathematics},
    VOLUME = {82},
 PUBLISHER = {Cambridge University Press, Cambridge},
      YEAR = {1983},
     PAGES = {xiv+289},
      ISBN = {0-521-24146-4},
       DOI = {10.1017/CBO9780511565953},
}

@article{koberda2010some,
  title={On some of the residual properties of finitely generated nilpotent groups},
  author={Koberda, Thomas},
  journal={arXiv preprint arXiv:1002.3203},
  year={2010}
}

@article{bridson2012groups,
  title={On groups whose geodesic growth is polynomial},
  author={Bridson, Martin R and Burillo, Jos{\'e} and Elder, Murray and {\v{S}}uni{\'c}, Zoran},
  journal={International Journal of Algebra and Computation},
  volume={22},
  number={05},
  pages={1250048},
  year={2012},
  publisher={World Scientific}
}

@incollection {gromov1992asymptotic,
    AUTHOR = {Gromov, Mikhael},
     TITLE = {Asymptotic invariants of infinite groups},
 BOOKTITLE = {Geometric group theory, {V}ol. 2 ({S}ussex, 1991)},
    SERIES = {London Math. Soc. Lecture Note Ser.},
    VOLUME = {182},
     PAGES = {1--295},
 PUBLISHER = {Cambridge Univ. Press, Cambridge},
      YEAR = {1993},
      ISBN = {0-521-44680-5}
}

@article {Sahin_Schraudner_Ugarcovici_2021,
    AUTHOR = {\c{S}ahin, Ay\c{s}e A. and Schraudner, Michael and Ugarcovici,
              Ilie},
     TITLE = {A strongly aperiodic shift of finite type on the discrete
              {H}eisenberg group using {R}obinson tilings},
   JOURNAL = {Illinois J. Math.},
  FJOURNAL = {Illinois Journal of Mathematics},
    VOLUME = {65},
      YEAR = {2021},
    NUMBER = {3},
     PAGES = {655--686},
      ISSN = {0019-2082,1945-6581},
       DOI = {10.1215/00192082-9446050},
}

@article{cohen2017strongly,
  title={Strongly aperiodic subshifts on surface groups},
  author={Cohen, David Bruce and Goodman-Strauss, Chaim},
  journal={Groups, Geometry, and Dynamics},
  volume={11},
  number={3},
  pages={1041--1059},
  year={2017}
}

@article {gao2009coloring,
    AUTHOR = {Gao, Su and Jackson, Steve and Seward, Brandon},
     TITLE = {A coloring property for countable groups},
   JOURNAL = {Math. Proc. Cambridge Philos. Soc.},
  FJOURNAL = {Mathematical Proceedings of the Cambridge Philosophical
              Society},
    VOLUME = {147},
      YEAR = {2009},
    NUMBER = {3},
     PAGES = {579--592},
      ISSN = {0305-0041,1469-8064},
       DOI = {10.1017/S0305004109002655},
}

@article{Aubrun_Schraudner_2020,
  title={Tilings of the hyperbolic plane of substitutive origin as subshifts of finite type on Baumslag--Solitar groups $\mathit{BS}(1,n)$},
  author={Aubrun, Nathalie and Schraudner, Michael},
  journal={Comptes Rendus. Math{\'e}matique},
  volume={362},
  number={G5},
  pages={553--580},
  year={2024}
}

@inproceedings {aubrun2013baumslag,
    AUTHOR = {Aubrun, Nathalie and Kari, Jarkko},
     TITLE = {Tiling problems on {B}aumslag-{S}olitar groups},
 BOOKTITLE = {Proceedings: {M}achines, {C}omputations and {U}niversality
              2013},
    SERIES = {Electron. Proc. Theor. Comput. Sci. (EPTCS)},
    VOLUME = {128},
     PAGES = {35--46},
 PUBLISHER = {EPTCS},
      YEAR = {2013},
       DOI = {10.4204/EPTCS.128.12},
}

@article{aubrun2023gbs,
  title={Strongly aperiodic {SFT}s on generalized {B}aumslag–{S}olitar groups},
  volume={44},
  number={5},
  journal={Ergodic Theory and Dynamical Systems},
  author={Aubrun, Nathalie and Bitar, Nicol{\'a}s and Huriot-Tattegrain, Sacha},
  year={2024},
  pages={1209–1238},
}

@article{block1992aperiodic,
  title={Aperiodic tilings, positive scalar curvature, and amenability of spaces},
  author={Block, Jonathan and Weinberger, Shmuel},
  journal={Journal of the American Mathematical Society},
  volume={5},
  number={4},
  pages={907--918},
  year={1992}
}

@article{marcinkowski2014aperiodic,
  title={Aperiodic tilings of manifolds of intermediate growth},
  author={Marcinkowski, Micha{\l} and Nowak, Piotr W},
  journal={Groups, Geometry, and Dynamics},
  volume={8},
  number={2},
  pages={479--483},
  year={2014}
}

@article {ali2018periods,
    AUTHOR = {Ali Akbar, K. and Kannan, V.},
     TITLE = {Set of periods of a subshift},
   JOURNAL = {Proc. Indian Acad. Sci. Math. Sci.},
  FJOURNAL = {Indian Academy of Sciences. Proceedings. Mathematical
              Sciences},
    VOLUME = {128},
      YEAR = {2018},
    NUMBER = {5},
     PAGES = {Paper No. 63, 8},
      ISSN = {0253-4142,0973-7685},
       DOI = {10.1007/s12044-018-0431-2},
}

@book{lind2021introduction,
    AUTHOR = {Lind, Douglas and Marcus, Brian},
     TITLE = {An Introduction to Symbolic Dynamics and Coding},
    SERIES = {Cambridge Mathematical Library},
   EDITION = {Second},
 PUBLISHER = {Cambridge University Press, Cambridge},
      YEAR = {2021},
     PAGES = {xix+550},
      ISBN = {978-1-108-82028-8},
       DOI = {10.1017/9781108899727},
}

@article {aubrun2017effectiveness,
    AUTHOR = {Aubrun, Nathalie and Barbieri, Sebasti\'{a}n and Sablik,
              Mathieu},
     TITLE = {A notion of effectiveness for subshifts on finitely generated
              groups},
   JOURNAL = {Theoret. Comput. Sci.},
  FJOURNAL = {Theoretical Computer Science},
    VOLUME = {661},
      YEAR = {2017},
     PAGES = {35--55},
      ISSN = {0304-3975,1879-2294},
       DOI = {10.1016/j.tcs.2016.11.033},
}

@article{barbieri2024effective,
  title={Effective dynamical systems beyond dimension zero and factors of SFTs},
  author={Barbieri, Sebasti{\'a}n and Carrasco-Vargas, Nicanor and Rojas, Crist{\'o}bal},
  journal={arXiv preprint arXiv:2401.07973},
  year={2024}
}

@article {morse1938symbolic,
    AUTHOR = {Morse, Marston and Hedlund, Gustav A.},
     TITLE = {Symbolic {D}ynamics},
   JOURNAL = {Amer. J. Math.},
  FJOURNAL = {American Journal of Mathematics},
    VOLUME = {60},
      YEAR = {1938},
    NUMBER = {4},
     PAGES = {815--866},
      ISSN = {0002-9327,1080-6377},
       DOI = {10.2307/2371264},
}

@article {joseph2014allosteric,
    AUTHOR = {Joseph, Matthieu},
     TITLE = {Continuum of allosteric actions for non-amenable surface
              groups},
   JOURNAL = {Ergodic Theory Dynam. Systems},
  FJOURNAL = {Ergodic Theory and Dynamical Systems},
    VOLUME = {44},
      YEAR = {2024},
    NUMBER = {6},
     PAGES = {1581--1596},
      ISSN = {0143-3857,1469-4417},
       DOI = {10.1017/etds.2023.52},
}

@article{carderi2022space,
  title={On the space of subgroups of Baumslag-Solitar groups I: perfect kernel and phenotype},
  author={Carderi, Alessandro and Gaboriau, Damien and Maître, Fran{\c{c}}ois Le and Stalder, Yves},
  journal={arXiv preprint arXiv:2210.14990},
  year={2022}
}

@article {berger1966undecidability,
    AUTHOR = {Berger, Robert},
     TITLE = {The undecidability of the domino problem},
   JOURNAL = {Mem. Amer. Math. Soc.},
  FJOURNAL = {Memoirs of the American Mathematical Society},
    VOLUME = {66},
      YEAR = {1966},
     PAGES = {72},
      ISSN = {0065-9266,1947-6221},
}

@article{robinson1971undecidability,
  title={Undecidability and nonperiodicity for tilings of the plane},
  author={Robinson, Raphael M},
  journal={Inventiones mathematicae},
  volume={12},
  number={3},
  pages={177--209},
  year={1971},
  publisher={Springer}
}

@article {kari1996small,
    AUTHOR = {Kari, Jarkko},
     TITLE = {A small aperiodic set of {W}ang tiles},
   JOURNAL = {Discrete Math.},
  FJOURNAL = {Discrete Mathematics},
    VOLUME = {160},
      YEAR = {1996},
    NUMBER = {1-3},
     PAGES = {259--264},
      ISSN = {0012-365X,1872-681X},
       DOI = {10.1016/0012-365X(95)00120-L},
}

@article {jeandel2021wang,
    AUTHOR = {Jeandel, Emmanuel and Rao, Micha\"{e}l},
     TITLE = {An aperiodic set of 11 {W}ang tiles},
   JOURNAL = {Adv. Comb.},
  FJOURNAL = {Advances in Combinatorics},
      YEAR = {2021},
     PAGES = {Paper No. 1, 37},
      ISSN = {2517-5599},
       DOI = {10.19086/aic.18614},
}

\vspace{0.5cm}

\begin{tabular}{@{}l}\scshape Universit\'e Paris-Saclay, CNRS, LISN, Gif-sur-Yvette, France\\\textit{E-mail address: }\href{mailto:nicolas.bitar@lisn.fr}{nicolas.bitar@lisn.fr}\end{tabular}

\end{document}